\renewcommand{\P}{\mathbb{P}}
\newcommand{\E}{\mathbb{E}}
\newcommand{\allone}{\mathbf{1}}
\newcommand{\ind}{\mathbf{1}}
\newcommand{\Ell}{\mathcal{L}}
\newcommand{\diag}{\textnormal{diag}}
\newcommand{\inv}{\tau_\textnormal{inv}}
\newcommand{\compi}{\textnormal{i}}
\newcommand{\cov}{\textnormal{cov}}
\newcommand{\tr}{\textnormal{tr}}
\newcommand{\orth}{{\mathbb{R}^{2n}}}
\newcommand{\orthp}{{\mathbb{R}_+^n}}
\newtheorem{thm}{Theorem}[section]
\newtheorem{lem}[thm]{Lemma}
\newtheorem{cor}[thm]{Corollary}
\newtheorem{rem}{Remark}[thm]
\title{The Density Formula Approach for Non-reversible Isomorphism Theorems, with Applications}
\author{Qinghua (Devon) Ding, \quad Venkat Anantharam\\
  Department of Electrical Engineering and Computer Sciences\\
                    University of California at Berkeley\\
                    Berkeley, CA, United States\\
Email: \{devon\_ding, ananth\}@berkeley.edu}
\begin{document}

\maketitle

\begin{abstract}
The classical isomorphism theorems for reversible Markov chains have played an important role in studying the properties of local time processes of strongly symmetric Markov processes~\cite{mr06}, bounding the cover time of a graph by a random walk~\cite{dlp11}, and in topics related to physics such as random walk loop soups and Brownian loop soups~\cite{lt07}. Non-reversible versions of these theorems have been discovered by Le Jan, Eisenbaum, and Kaspi~\cite{lejan08, ek09, eisenbaum13}. Here, we give a density-formula-based proof for all these non-reversible isomorphism theorems, extending the results in \cite{bhs21}. Moreover, we use this method to generalize the comparison inequalities derived in \cite{eisenbaum13} for permanental processes and derive an upper bound for the cover time of non-reversible Markov chains.
\end{abstract}

\section{Introduction}

Since the discovery of the Dynkin-BFS isomorphism theorem~\cite{dynkin84, bfs82}, isomorphism theorems for reversible Markov chains have been an important tool to study stochastic processes related to Markov chains. Examples of applications of these isomorphism theorems include the study of large deviations, continuity, and boundedness of local time processes of Markov processes~\cite{ek09, mr06}, tight bounds for and concentration of the cover time of
%reversible graph random walk
random walks on graphs~\cite{dlp11, zhai18}, percolation and level sets of the loop soup ensemble~\cite{lupu16}, etc. The book \cite{mr06} contains a rather diverse list of such applications.

Non-reversible versions of Dynkin's isomorphism theorem and the generalized second Ray-Knight isomorphism theorem have been discovered by Le Jan, Eisenbaum, and Kaspi~\cite{lejan08,ek09, eisenbaum13}. These isomorphism theorems were used to study properties of the local time process for non-reversible Markov chains, such as continuity~\cite{mr13} and boundedness~\cite{mr17} and also to study the large deviations of local %time decades ago\cite{bm91}. time processes~\cite{bm91}. Here, we give a density-formula-based proof for these non-reversible isomorphism theorems, extending the results in~\cite{bhs21}. To do so, we derive density formulas for 1-permanental processes and conditioned 1-permanental processes. We then apply
%the
Ward identities as in~\cite{bhs21},
%for the twisted Gaussian measure~\cite{lejan08, av20},
together with these density formulas, to give succinct proofs for the non-reversible isomorphism theorems.

We then use the density formula to generalize the comparison inequalities derived in~\cite{eisenbaum13} for permanental processes. Besides, we also derive an upper bound for the cover time of non-reversible Markov chains, using the non-reversible
%Ray-Knight theorem
generalized second Ray-Knight isomorphism theorem together with the density formulas for conditioned 1-permanental processes that we prove.

\section{Density of 1-permanental vectors}

We start with some basic notational conventions. For a positive integer $n$, let $[n]$ denote $\{1, \ldots, n\}$. Let $M_n(\mathbb{R})$ denote the set of $n \times n$ matrices with real entries and $M_n(\mathbb{R}_+)$ the set of $n \times n$ matrices with nonnegative real entries. We write $:=$ for equality by definition.

We say that $G \in M_n(\mathbb{R})$ is diagonally equivalent to $\tilde{G} \in M_n(\mathbb{R})$ if there exists a diagonal matrix $D \in M_n(\mathbb{R})$ with non-zero diagonal entries such that $\tilde{G}=D^{-1}GD$.
%We denote $\compi:=\sqrt{-1}$ as the imaginary unit.
Given $A\in M_n(\mathbb{R})$, its spectral radius is defined as $\rho(A):=\lim_{k\rightarrow\infty}\|A^k\|^{\frac{1}{k}}$, where $\|\cdot \|$ is any matrix norm (the choice of the norm does not matter). We say that $A\in M_n(\mathbb{R})$ is diagonally dominant if $|A_{ii}|\geq\sum_{j\neq i}|A_{ij}|$ for all $i\in[n]$. It is called strictly diagonally dominant if these inequalities are strict for all $i\in[n]$. $A\in M_n(\mathbb{R})$ is called an $M$-matrix if it can be written as $sI- B$ for matrix $B$ with nonnegative entries, and where $s \ge \rho(B)$, see \cite[pg. 176]{plemmons77}. It is known that
%An $n\times n$ matrix
$A$ is a non-singular M-matrix iff:
\begin{enumerate}
    \item $A_{ij}\leq 0$ for $i\neq j$;
    \item $A$ is non-singular and $A^{-1}$ is entry-wise nonnegative.
\end{enumerate}
See \cite[Criterion F15, pg. 180]{plemmons77}.

For any $\alpha>0$, a random vector $\ell\in\mathbb{R}_+^n$ is called an $\alpha$-permanental vector with kernel $G \in M_n(\mathbb{R})$ if, for
%any
every $\lambda\in\mathbb{R}_{+}^n$, we have the Laplace transform
\[\E[e^{-\langle \lambda, \ell\rangle}]=|I+\Lambda G|^{-\alpha}.\]
Here, on the right-hand side, $\Lambda := \mbox{diag}(\lambda_1, \ldots, \lambda_n)$, and $|I+\Lambda G|$ denotes the determinant of $I+\Lambda G$. We remark that a 1-permanental vector can correspond to different kernels (e.g., arising from diagonal equivalence). Vere-Jones~\cite{verejones97} has established necessary and sufficient conditions on the pair $(G, \alpha)$ for $G$ to be the kernel of an $\alpha$-permanental vector, see also \cite[Sec. 2]{eisenbaum17}.

%Specifically,
1-permanental vectors are closely related to non-reversible isomorphism theorems \cite{lejan08, ek09}, and
%we will be mainly focused on 1-permanental vectors unless stated otherwise.
will be the main object of our study.
%To begin with, we present
Our first
%main
contribution is a density formula for a broad range of 1-permanental vectors.
%in the sense of Brydges-Hofstad-König\cite{bhk07}.
The proof is based on characteristic functions, following an approach similar to that of Brydges-Hofstad-König~\cite{bhk07}.

In order to state the result, we need to introduce some conventions regarding complex numbers and vectors. We write $\compi$ for $\sqrt{-1}$, to distinguish it from a running index $i$. For a complex number $\phi\in\mathbb{C}$, we let $\Re(\phi)$
%be
denote the real part, and $\Im(\phi)$
%be the
the imaginary part. We write $\phi =u +\compi v$, so that $u = \Re(\phi)$ and $v = \Im(\phi)$. Then $\bar{\phi} = u - \compi v$ denotes the complex conjugate of $\phi$. For $\phi, \psi \in \mathbb{C}^n$, let $\langle \phi, \psi \rangle := \sum_{i=1}^n \phi_i \psi_i$ denote the (indefinite) real inner product on $\mathbb{C}^n$. Note that we do not conjugate either argument in computing this inner product. It will also be convenient to think of $\phi \in \mathbb{C}$ as $\sqrt{l} e^{\compi \theta}$, so that $\sqrt{l} =|\phi|$ denotes the absolute value of $\phi$ and $\theta$ denotes its phase. Note that, in one complex dimension, we have $du dv = 2^{-1} dl d\theta$. Also observe that we have, in one complex dimension, the formula
\begin{equation}        \label{eq:diffform}
d \bar{\phi} \wedge d \phi = (du - \compi dv) \wedge (du + \compi dv) = 2 \compi du \wedge dv = \compi dl \wedge d\theta,
\end{equation}
as an equality of differential forms.

We can now state our density formula for the density of a class of 1-permanental vectors. Here, for any matrix $M\in M_n(\mathbb{R})$, its symmetric part is defined to be $\frac{1}{2}(M + M^T)$.

\vspace{.3em} \begin{lem}[Density of 1-permanental vectors]   \label{lem:density}
Let $G\in M_n(\mathbb{R})$ have a positive definite symmetric part.
%Given an invertible matrix $G\in M_n(\mathbb{R}_+)$, if it has a
%positive definite symmetric part,
%positive definite Hermitian part,
If there exists a 1-permanental process $\ell$ with kernel $G$, then the density $\rho(l)$ can be represented as follows:
\begin{equation}
\rho(l) = \int_{(0,2\pi)^n}\frac{1}{(2\pi)^n|G|}\, e^{\sum_{i,j} Q_{ij}\sqrt{l_i l_j}e^{\compi (\theta_i-\theta_j)}}d\theta.
\label{eqn:densityformula}
\end{equation}

Here $Q :=-G^{-1}$. Also $l_i$ denotes the $i$-th component of the vector
%$\ell$
$l$, and we write $d\theta$ for $\prod_i d \theta_i$.
%when this kernel is associated with a transient Markov chain.
\qed
\end{lem}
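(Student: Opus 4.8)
The plan is to verify \eqref{eqn:densityformula} by a change of variables to complex coordinates, turning the right-hand side into a complex Gaussian integral. Write $g(l)$ for the function of $l\in\mathbb{R}_+^n$ defined by the right-hand side of \eqref{eqn:densityformula}. I would show that $g$ is a genuine probability density whose Laplace transform coincides with $\E[e^{-\langle\lambda,\ell\rangle}]=|I+\Lambda G|^{-1}$; since a finite measure on $\mathbb{R}_+^n$ is determined by its Laplace transform, this forces the law of $\ell$ to have density $g$, which is the assertion.

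The computation rests on the complex Gaussian integral identity underlying the Brydges--Hofstad--K\"onig approach: for any $A\in M_n(\mathbb{R})$ whose symmetric part is positive definite,
\[
\int_{\mathbb{C}^n} e^{-\langle\bar{\phi},A\phi\rangle}\,\prod_{i=1}^n\frac{du_i\,dv_i}{\pi}\;=\;\frac{1}{|A|},
\]
where $\phi_i=u_i+\compi v_i$ and $\langle\bar{\phi},A\phi\rangle:=\sum_{i,j}A_{ij}\bar{\phi}_i\phi_j$; this is proved by diagonalizing when $A$ is symmetric positive definite and extending by analyticity over the (convex, hence connected) set of real matrices with positive definite symmetric part, and I would either invoke it or record it as a one-line preliminary. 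With this in hand, fix $\lambda\in\mathbb{R}_+^n$, insert the definition of $g$, pass to the joint integral over $(l,\theta)\in\mathbb{R}_+^n\times(0,2\pi)^n$, and substitute $\phi_i=\sqrt{l_i}\,e^{\compi\theta_i}$, using $dl_i\,d\theta_i=2\,du_i\,dv_i$. Under this substitution $\langle\lambda,l\rangle=\sum_i\lambda_i|\phi_i|^2=\langle\bar{\phi},\Lambda\phi\rangle$ and $\sum_{i,j}Q_{ij}\sqrt{l_il_j}\,e^{\compi(\theta_i-\theta_j)}=\sum_{i,j}Q_{ij}\phi_i\bar{\phi}_j=-\langle\bar{\phi},(G^T)^{-1}\phi\rangle$, so the full exponent becomes $-\langle\bar{\phi},(\Lambda+(G^T)^{-1})\phi\rangle$. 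Applying the identity with $A=\Lambda+(G^T)^{-1}$ and collecting the prefactors via $2^n/(2\pi)^n=\pi^{-n}$, the integral collapses to $1/(|G|\,|\Lambda+(G^T)^{-1}|)=1/|I+G^T\Lambda|=1/|I+\Lambda G|$, where I used $|G|\,|(G^T)^{-1}|=1$ and $|I+G^T\Lambda|=|(I+\Lambda G)^T|$. This is exactly $\E[e^{-\langle\lambda,\ell\rangle}]$.

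Two points then remain, and both are where the hypothesis on $G$ enters rather than any hard estimate. First, the substitution and the Fubini interchange above require absolute integrability: since $G$ has positive definite symmetric part, so does $(G^T)^{-1}$ --- indeed $x^T(G^T)^{-1}x=y^TGy>0$ for real $x\neq 0$ with $y:=(G^T)^{-1}x$ --- hence $\Lambda+(G^T)^{-1}$ has positive definite symmetric part and $|e^{-\langle\bar{\phi},(\Lambda+(G^T)^{-1})\phi\rangle}|$ decays like a genuine Gaussian in $(u,v)$, which legitimizes every step (and, at $\lambda=0$, shows $\int_{\mathbb{R}_+^n}|g(l)|\,dl<\infty$). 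Second, $g(l)\,dl$ is thus a finite signed measure on $\mathbb{R}_+^n$ whose Laplace transform matches that of $\ell$ on all of $\mathbb{R}_+^n$; uniqueness of Laplace transforms of finite signed measures on $\mathbb{R}_+^n$ gives that $g(l)\,dl$ equals the law of $\ell$, in particular $g\geq 0$ almost everywhere and $g$ is the density of $\ell$. The only thing requiring care is the positive-definiteness bookkeeping that keeps every integral convergent and the complex Gaussian formula applicable; the remainder is the standard Gaussian computation together with routine determinant identities.
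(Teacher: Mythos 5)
Your proposal is correct and is essentially the paper's own argument read in the opposite direction: the paper expresses $\E[e^{-\langle\lambda,\ell\rangle}]=|G|^{-1}|\Lambda-Q|^{-1}$ as a complex Gaussian integral via the Brydges--van der Hofstad--K\"onig identity and then passes to polar coordinates to read off $\rho(l)$, while you start from the candidate density and run the same change of variables and the same Gaussian identity to recover $|I+\Lambda G|^{-1}$, finishing in both cases with uniqueness of the Laplace transform. All the determinant bookkeeping and the positive-definiteness checks in your write-up are accurate, so no substantive difference or gap remains.
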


\vspace{.5em} \vspace{.3em} \begin{rem} \label{rem:rem1-lem:density}
It is straightforward to check that if $G\in M_n(\mathbb{R})$ has a positive definite symmetric part, then $G$ is invertible.
\qed
\end{rem}

\vspace{.3em} \begin{rem} \label{rem:notnew-lem:density}
%First above, this lemma is not completely new.
Lemma \ref{lem:density} is not completely new. It appeared in \cite{av20}, but only in the Markovian setting (defined in \cref{rem:rem2-lem:density} below). Using the results derived by Brydges-Hofstad-König~\cite{bhk07},
%we generalize it
our contribution is to generalize the density formula of \cite{av20} to any kernel with a positive definite symmetric part.
\qed
\end{rem}

\vspace{.3em} \begin{rem}[Infinitely divisible kernels]
\label{rem-infdiv}
By the definition of infinite divisibility, a 1-permanental vector $\ell$ is infinitely divisible if and only if, for every positive integer $n$, there exist $n$ i.i.d. random vectors $\ell_1$, ..., $\ell_n$ such that $\tilde{\ell}:= \ell_1 + \cdots + \ell_n$ is equal in distribution to $\ell$. We say that a kernel $G$ of a $1$-permanental vector is infinitely divisible if the corresponding 1-permanental vector $\ell$ is infinitely divisible. In \cite{ek09}, several characterizations are given of the family of %infinitely divisible kernels. kernels of infinitely divisible 1-permanental vectors. One such characterization is \cite[Lemma 4.2]{ek09}, from which one can conclude that any
%infinitely divisible kernel $G$
kernel $G$ of an infinitely divisible 1-permanental vector is diagonally equivalent to an inverse M-matrix, i.e., the inverse of an invertible M-matrix $A$.
\footnote{If $G$ is
%an infinitely divisible kernel,
a kernel of an infinitely divisible $1$-permanental vector then there should be a $\frac{1}{2}$-permanental vector $\ell$ with kernel $G$.
%Since $G$ is an infinitely divisible kernel, this vector $\ell$ must be infinitely divisible.
This random vector $\ell$ must be infinitely divisible. On the other hand, if $\ell$ is a $\frac{1}{2}$-permanental vector with kernel $G$ such that $\ell$ is infinitely divisible, then the corresponding 1-permanental vector should also be infinitely divisible, and hence $G$ is an infinitely divisible kernel. According to \cite[Lemma 4.2]{ek09}, if a $\frac{1}{2}$-permanental vector $G$ is infinitely divisible, then its kernel is diagonally equivalent to an inverse M-matrix. Therefore, if $G$ is an infinitely divisible kernel for a $1$-permanental vector, it should be diagonally equivalent to an inverse M-matrix.}.

We will now prove that any infinitely divisible kernel $G$ of a $1$-permanental vector has an equivalent kernel $\tilde{G}$ with a positive definite symmetric part. To see this, fix some infinitely divisible kernel $G$ of a $1$-permanental vector. By \cite[Lemma 4.2]{ek09}, it is diagonally equivalent to the inverse of a non-singular M-matrix $A$. Then, using \cite[Condition $I_{26}$]{plemmons77} for $A$, we know that there exists a diagonal matrix $D$ with positive diagonal terms, such that $D^{-1}AD$ has a positive definite symmetric part. Therefore, using Lemma \ref{lem:inv_psd} the inverse of $D^{-1}AD$ as $D^{-1}(A^{-1})D$ should also have a positive definite symmetric part. Since $A^{-1}$ is diagonally equivalent to the kernel $D^{-1}(A^{-1})D$, and $G$ is diagonally equivalent to $A^{-1}$, by transitivity of diagonal equivalence, $G$ should also be diagonally equivalent to the matrix $D^{-1}(A^{-1})D$, which has a positive definite symmetric part. \qed
\end{rem}

\vspace{.3em} \begin{rem}[Markovian kernels]
\label{rem:rem2-lem:density}
%In the Markovian case, the kernel in the above lemma has some special probabilistic meaning.
%The kernel in Lemma \ref{lem:density} has a probabilistic meaning in the Markovian case.
A sub-family of
%infinitely divisible kernels,
kernels of infinitely divisible $1$-permanental vectors, which we call the Markovian kernels, is closely related to Markov chains and plays an important role in the study of local-time isomorphism theorems. Given an irreducible discrete-time Markov chain on the state space $[n]$, let us denote its transition kernel and stationary distribution by $P$ and $\pi$, respectively, where $\pi$ is thought of as a column vector. Let $\Pi=\diag(\pi)$ denote the diagonal matrix with diagonal terms given by $\pi$. The Laplacian associated with this Markov chain is defined as $Q=\Pi(P-I)$. Consider the continuous-time Markov process with rate matrix $Q$ (whose stationary distribution would then be the uniform distribution). We introduce a cemetery state $\Delta$, and a nonnegative killing rates given by $\bm{h}=(h_1, \cdots, h_n)\geq 0$.
%where $\nu$ is thought of as a column vector.
Namely, when in state $i \in [n]$, there is now a rate $h_i$ of jumping to the cemetery state. The
%Laplacian, or rate matrix,
rate matrix associated with
%this
the corresponding continuous-time killed Markov process is defined to be $Q_{\bm{h}}=Q-\diag(\bm{h})$. It can be checked that $Q_{\bm{h}}$ is invertible when $\sum_i h_i > 0$, and we can define the Green's kernel associated with it as $G_{\bm{h}} : =-Q_{\bm{h}}^{-1}$. Any
%kernel
matrix $G_{\bm{h}}$ obtained from such a construction is called a Markovian kernel.

We claim that $G_{\bm{h}}$ will have a positive definite symmetric part.
%positive definite Hermitian part.
To see this, let $P^* := \Pi^{-1} P^T \Pi$
%P*=Π-1PTΠ
be the time reversal of $P$. Note that $P^*$ is a stochastic matrix. The additive-reversibilization of $P$ is defined to be the Markov chain with transition matrix $M := \frac{1}{2}(P + P^*)$, see
%M=(P+P*)/2 (refer to the Aldous book, Lemma 9.20).
\cite[Lemma 9.20]{aldous-fill}.
%(Clean up the following text till the end of the remark.)
Note that $\pi$ is the stationary distribution of $M$, since $\pi^T \frac{P + P^*}{2} = \frac{1}{2} \pi^T + \frac{1}{2} \pi^T \Pi^{-1} P^T \Pi = \pi^T$. Further, $M$ is reversible, since $\Pi M = \frac{1}{2} (\Pi P  + P^T \Pi) = M^T \Pi$.
%1. π is the stationary distribution of M, since πT(P+P*)/2=πT/2+πTΠ-1PTΠ/2=πT
%2. M is reversible since ΠM=(ΠP+PTΠ)/2=MTΠ
This Markov chain is clearly irreducible, and its Laplacian is $\Pi (M-I)$,
%which exactly coincides with S.
which we denote by $S$. Since $S$ is the Laplacian of
%an irreducible Markov chain,
a reversible Markov chain, $-S$ is nonnegative definite. This can be seen by observing that $S$ has the same eigenvalues as $\Pi^{\frac{1}{2}} S \Pi^{- \frac{1}{2}}$, which is symmetric, and so has real eigenvalues, while $S$, as the rate matrix of an irreducible continuous-time Markov chain, has $0$ as a simple eigenvalue and all other eigenvalues having strictly negative real parts.
%(need to give a justification for this).
%Furthermore, the leading eigenspace should be simple due to the irreducibility of $M$.

%(Need to add the rest of the argument as given later in the document.)

By the Perron-Frobenius theorem, we know that $\langle v,-Sv\rangle=0$ if $v\in\textnormal{span}(\ind)$, and $\langle v,-Sv\rangle > 0$ otherwise. But if $v\in\textnormal{span}(\ind)$ and $v\neq 0$, then $v^T \diag(\bm{h}) v > 0$. Hence, for any $v$, we have $\langle v, (\diag(\bm{h})-S)v\rangle>0$ if $v\neq 0$, i.e. $\diag(\bm{h})-S$ is positive definite, i.e. $- \frac{1}{2}( Q_{\bm{h}} + Q_{\bm{h}}^T)$ is positive definite. By Lemma \ref{lem:inv_psd}, we know that $-Q_{\bm{h}}$ and hence $G_{\bm{h}}$ has a positive definite symmetric part.

It is apparent that $G_{\bm{h}}^{-1}=-Q_{\bm{h}}=\diag(\bm{h})+\Pi-\Pi  P$ has non-positive off-diagonal terms. We also showed that it has a positive definite symmetric part. Then we can use \cite[Condition $I_{26}$]{plemmons77} to see that $-Q_{\bm{h}}$ is indeed a non-singular M-matrix. Therefore, $G_{\bm{h}}$ is an inverse M-matrix and hence infinitely divisible by \cite[Lemma 4.2]{ek09}.
\qed

\end{rem}

\vspace{.3em} \begin{rem}[Positive definite kernels]
\label{rem:rem-psd-lem:density}

Let $G \in M_n(\mathbb{R})$ be a positive definite matrix. Let $\eta \sim N(0, G)$. One can see from the moment generating function that $\frac{1}{2} \eta^2$ is $\frac{1}{2}$-permanental with kernel $G$. Since the sum of two independent copies of a $\frac{1}{2}$-permanental random vector with a given kernel is a $1$-permanental vector with that kernel, it follows that for every positive definite matrix $G \in M_n(\mathbb{R})$, there is a $1$-permanental vector with kernel $G$. \qed
\end{rem}

{\color{blue}
}

\vspace{.3em} \begin{rem}
One might wonder if the condition
%of $G$ having
that $G$ should have a positive definite symmetric part is too stringent.
%In fact,
It is obvious that any positive definite and symmetric (PSD) matrix
%trivially falls into this category.
satisfies this condition. Less obvious is that
%green kernels
infinitely divisible kernels also fall into this category up to diagonal equivalence
%(see Remark 2.1.2).
(see Remark \cref{rem-infdiv}).

In his classic 1967 paper, Vere-Jones~\cite{verejones97} gave necessary and sufficient conditions
%for being 1-permanental kernels.
on the kernel for the corresponding 1-permanental vector to exist. However,
%his conditions
these conditions
%cannot be algorithmically checked even for a $4\times 4$ matrix.
are not easy to check. These conditions are known to hold for matrices that are either diagonally equivalent to
%PSD matrices
a PSD matrix or are
%infinitely divisible kernels \cite{eisenbaum17}.
infinitely divisible \cite{eisenbaum17}. We already showed in \cref{rem-infdiv} that any infinitely divisible kernel $G$ is diagonally equivalent to another kernel $\tilde{G}$ with a positive definite symmetric part. Therefore, both scenarios are included in Lemma \ref{lem:density} up to diagonal equivalence, in the sense that given any 1-permanental vector with such a kernel $G$, we can find an equivalent kernel $\tilde{G}$ that has a positive definite symmetric part and apply Lemma \ref{lem:density} to that kernel. We also remark that these two families of kernels are not inclusive of one another.

To the best of our knowledge, the only known instances of 1-permanental vectors whose kernel is neither diagonally equivalent to a PSD matrix nor an infinitely divisible one are the examples constructed in \cite[Theorem 7.3]{em17}, but their construction is very delicate, hence this family of kernels is rather restricted.
%{\bf Comment to Devon}(Resolved): Is there even one such example? If so, please show it to me.
We remark that it is an important open problem to give an algorithmically checkable criterion that characterizes the set of 1-permanental kernels. \qed
\end{rem}

In the discussion following the proof of Lemma \ref{lem:density} below, and in the rest of the paper, we say that a function $f:\mathbb{C}^n\rightarrow \mathbb{C}$ has subexponential growth rate if $\lim_{\|\phi\|\rightarrow \infty}\frac{\log(1+|f(\phi)|)}{\|\phi\|}= 0$. We say that such a function is
%sub-Gaussian
sub-Gaussian if there exists some $c, c'>0$, such that $|f(\phi)|\leq ce^{-c'\|\phi\|^2}$.
%positive definite Hermitian part.
Similar definitions apply to functions $f:\mathbb{R}_+^n\rightarrow \mathbb{C}$. We now proceed to prove Lemma \ref{lem:density}.

\begin{proof}
[Proof of Lemma \ref{lem:density}]
%[Proof of Lemma \ref{lem:density}]
This proof mimics the procedure of Brydges-van der Hofstad-König~\cite{bhk07}. By the definition of a $1$-permanental vector, for any $\lambda\in\mathbb{R}_+^n$, we have the Laplace transform
    \[\E\big[e^{-\langle\lambda,\ell\rangle}\big]=\frac{1}{|I+\Lambda G|}=\frac{1}{|G||\Lambda -Q|}.\]

Here we used $I+\Lambda G=(G^{-1}+\Lambda)G=(\Lambda-Q)G$. Note that $|G|\neq 0$ by \cref{rem:rem1-lem:density}.

Under the conditions assumed in Lemma \ref{lem:density}, we can write $\frac{1}{2}(G + G^T) = - \frac{1}{2} (Q^{-1} + Q^{-T}) = - \frac{1}{2} Q^{-1} (Q + Q^T) Q^{-T}$, from which we see that, since $G$ is assumed to have a positive definite symmetric part, so does $- Q$, and hence so does $\Lambda-Q$. Now, consider $n$ complex numbers $\phi_i=u_i+\compi v_i, \,\forall i\in[n]$. Using the formula in \cite[Lemma 2.2]{bhk07} for $\int e^{\langle\phi,M\bar{\phi}\rangle}du\,dv$ for the matrix $M=\Lambda-Q$, we have
    \[\E\big[e^{-\langle\lambda,\ell\rangle}\big]
=\int_{\mathbb{R}^{2n}}\frac{1}{\pi^n|G|}\, e^{-\langle\phi, (\Lambda-Q)\bar{\phi}\rangle}du\,dv =\int_{\mathbb{R}^{2n}}\frac{1}{\pi^n|G|}\, e^{-\langle\phi, \Lambda\bar{\phi}\rangle+\langle \phi, Q\bar{\phi}\rangle}du\,dv.\]

Using the change-of-variable $\phi_i=\sqrt{l_i}\,e^{\compi\theta_i}$ where $l_i\in(0,\infty), \theta_i\in[0,2\pi)$
    %$l_i\in[0,\infty), \theta_i\in[0,2\pi)$
for all $i\in[n]$ and $\phi_i \neq 0$ gives
    \begin{equation*}
    \begin{aligned}
    \E\big[e^{-\langle\lambda,\ell\rangle}\big] &=\int_{\mathbb{R}_+^n}\int_{[0,2\pi)^n}\frac{1}{(2\pi)^n|G|}\, e^{-\langle\lambda, l\rangle+\sum_{i,j} Q_{ij}\sqrt{l_i}\sqrt{l_j}e^{\compi (\theta_i-\theta_j)}}d\theta\,dl\\
    &=\int_{\mathbb{R}_+^n} e^{-\langle\lambda, l\rangle} \bigg(\int_{[0,2\pi)^n}\frac{1}{(2\pi)^n|G|}\, e^{\sum_{i,j} Q_{ij}\sqrt{l_i}\sqrt{l_j}e^{\compi (\theta_i-\theta_j)}}d\theta\bigg)dl,
    \end{aligned}
    \end{equation*}

where we have used the fact that $du\, dv = 2^{-n}d\theta\,dl$. Let us define the measure $\rho(l)$ as
    \[\rho(l)=\int_{[0,2\pi)^n}\frac{1}{(2\pi)^n|G|}\, e^{\sum_{i,j} Q_{ij}\sqrt{l_i}\sqrt{l_j}e^{\compi (\theta_i-\theta_j)}}d\theta.\]

We claim that $\rho(l)$ is finite. To see this, by the triangle inequality, we have
    \begin{eqnarray}
    \label{eq:symmetrize}
|\rho(l)|
    &\leq& \int_{[0,2\pi)^n}\frac{1}{(2\pi)^n|G|}\, \Big|e^{\sum_{i,j} Q_{ij}\sqrt{l_i}\sqrt{l_j}e^{\compi (\theta_i-\theta_j)}}\Big| d\theta \nonumber\\
    &=& \int_{[0,2\pi)^n}\frac{1}{(2\pi)^n|G|}\, e^{\sum_{i,j} S_{ij}\sqrt{l_i}\sqrt{l_j}e^{\compi (\theta_i-\theta_j)}} d\theta,
    \end{eqnarray}

where $S:=\frac{1}{2}(Q+Q^T)$ denotes the symmetric part of $Q$.
    %Note
Here, we used the fact that $|G|>0$ is necessary for the existence of a 1-permanental vector whose kernel has a positive definite symmetric part. (To see this, let $\Lambda=t I$ and take $t>0$ large enough; then the positivity of the Laplace transform $\E\big[e^{-\langle\lambda,\ell\rangle}\big]$ implies that $|G|>0$.) We have observed earlier that $-S$ has a positive definite symmetric part.
    %Note $-S=\frac{1}{2}(G^{-1}+G^{-T})=\frac{1}{2}G^{-1}(G+G^T)G^{-T}\succeq 0$ since $G$ has
    %positive definite symmetric part.
    %symmetric positive Hermitian part.
Let $\lambda_{\min}(-S)>0$ be the minimum eigenvalue of $-S$. Then we know by spectral bounds that
    \[|\rho(l)|
    \leq \int_{[0,2\pi)^n}\frac{1}{(2\pi)^n|G|}\, e^{-\lambda_{\min}(-S)\big(\sum_i l_i\big)} d\theta\\
= \frac{1}{|G|}\, e^{-\lambda_{\min}(-S)\big(\sum_i l_i\big)}<\infty.\]

This shows that $\rho(l)$ is indeed finite. Moreover, since
    \[\int_{\mathbb{R}_+^n}|\rho(l)|d\l\leq \int_{\mathbb{R}_+^n}\frac{1}{|G|}\, e^{-\lambda_{\min}(-S)\big(\sum_i l_i\big)}dl = \frac{1}{|G|(\lambda_{\min}(-S))^n}<\infty,\]

we have $\rho\in L^1(\mathbb{R}_+^n)$. Finally, to see that $\rho(l)$ is in fact real, note that
    \begin{equation*}
    \begin{aligned}
    \overline{\rho(l)}
    &=\int_{[0,2\pi)^n}\frac{1}{(2\pi)^n|G|}\, e^{\sum_{i,j} Q_{ij}\sqrt{l_i}\sqrt{l_j}e^{-\compi (\theta_i-\theta_j)}}d\theta\\
    &=\int_{[0,2\pi)^n}\frac{1}{(2\pi)^n|G|}\, e^{\sum_{i,j} Q_{ij}\sqrt{l_i}\sqrt{l_j}e^{\compi ((-\theta_i)-(-\theta_j))}}(-1)^n d(-\theta_1)\cdots d(-\theta_n)\\
    &=\int_{(-2\pi,0]^n}\frac{1}{(2\pi)^n|G|}\, e^{\sum_{i,j} Q_{ij}\sqrt{l_i}\sqrt{l_j}e^{\compi (\theta_i'-\theta_j')}} d\theta_1'\cdots d\theta_n'\\
    &=\int_{[0,2\pi)^n}\frac{1}{(2\pi)^n|G|}\, e^{\sum_{i,j} Q_{ij}\sqrt{l_i}\sqrt{l_j}e^{\compi (\theta_i'-\theta_j')}} d\theta_1'\cdots d\theta_n'\\
    &=\rho(l),
    \end{aligned}
    \end{equation*}

where in the third equality we used the substitution $\theta'=-\theta$. By
    %the property of Laplace transform,
the invertibility of the Laplace transform given the region of convergence, $\rho(l)$ is exactly the density of a 1-permanental random vector with kernel $G$, if such a vector exists.
\end{proof}

From the proof above, we know that for any $G \in M_n(\mathbb{R})$ with a positive definite symmetric part,
%the density $\rho(l)$
$\rho(l)$ is a well-defined real density in $L^1(\mathbb{R}_+^n)$. Moreover, from its definition, we see that $\rho(l)$ is infinitely differentiable in the interior of $\mathbb{R}_+^n$.
\footnote{The integrand in \cref{eqn:densityformula}, after being differentiated for a finite number of times, would still be integrable, due to essentially the same reasoning as used in the proof of Lemma \ref{lem:density}.}
Therefore, the existence of a 1-permanental vector with kernel $G$ is equivalent to the nonnegativity
%positivity
of the density $\rho(l)$.

We remark that Lemma \ref{lem:density} is
%implicitly implied by
implicitly in
\cite{av20} and \cite{fr14}, but only in the Markovian setting.
%To illustrate this implication, we make some detours to
To clarify this, we need to define the Markovian loop soups.

For a transient Markov process $\{X_t: t\geq 0\}$ on discrete state space $V$, and any $t>0$, let $\Omega_{r,t}$ be the space of right-continuous functions $[0, t] \rightarrow V$, with finitely many jumps and the same value at $0$ and $t$. Also, let $\Omega_r :=\cup_{t>0}\Omega_{r,t}$. We call this the space of rooted loops. For any $\phi\in \Omega_r$, let $\zeta(\phi)$ be the unique $t>0$ such that $\phi\in \Omega_{r,t}$.  We identify $\Omega_r$ with $\Omega_{r,1}\times (0,\infty)$ via the map $(w, t)\in \Omega_{r,1}\times (0, \infty) \rightarrow \phi(\cdot) = w(\frac{\cdot}{t})\in \Omega_r$, and endow $\Omega_{r,1}\times (0, \infty)$ with the canonical product $\sigma$-algebra (where $\Omega_{r,1}$ is endowed with the $\sigma$-algebra generated by the maps $X_s, 0 \leq s \leq 1$, from $\Omega_{r,1}$ into $V$). For any $x\in V$, define $P_{x,x}^t$ on $\Omega_{r,t}$ such that for any measurable subset $B_t\subset\Omega_{r,t}$,
\[P_{x,x}^t(B_t) = P(B_t\cap \{X_t = x\}\,|\,X_0=x).\]

We then define the rooted loop measure $\mu_r$ on $\Omega_r$ such that for any measurable subset $B\subset \Omega_r$,
\[\mu_r(B):=\sum_{x\in V}\int_0^\infty P_{x,x}^t(B)\frac{dt}{t}.\]
The shift operator $\theta_s$ acts on a rooted loop $\phi\in L_{r,t}$ by cyclically rotating the loop by time $s$. The set of unrooted loops, $\Omega_*$, is the
%equivalence class
set of equivalence classes of rooted loops $\Omega_r$ under the equivalence relationship defined by the shifts $\{\theta_s: s>0\}$. The unrooted loop measure $\mu_*$ on $\Omega_*$ is then induced by the rooted loop measure under this equivalence relationship.

An unrooted loop soup (or Poisson gas of Markovian loops), at level $\alpha > 0$, is then a Poisson point process on the space $\Omega_*$ of unrooted loops, with intensity measure $\alpha \mu_*$. We will be mostly interested in loop soups at level $\alpha=1$. Therefore, we will always assume the level is at $1$ unless stated otherwise. Let $\Phi=\{\phi_i^*: i\in I\}$ be a countable collection of loops generated via such Poisson point process. Then we can define the loop soup occupation times (or occupation field) as $\ell^x(\Phi):=\sum_{i\in I}T^x(\phi_i)$ for any $x\in V$. Here $T^x(\phi)$ represents the local time the loop $\phi$ spends at state $x\in V$.

It is shown in Fitzsimmons and Rosen~\cite{fr14} that the Markovian loop soup local time $\ell$ is equal in distribution to a 1-permanental vector with the corresponding
%green kernel
Green's kernel, by using the generalized Le Jan's isomorphism theorem (non-symmetric version, Theorem 3.1 in \cite{fr14}).
%{\bf Comment to Devon}(Resolved): Need to have a citation here to the theorem of Le Jan that is used by Fitzsimmons and Rosen.
Moreover, Theorem 2.15(a) in \cite{av20} states that for any bounded and continuous function $F:\mathbb{R}_+^n\rightarrow \mathbb{C}$, we have $\int \frac{1}{(2\pi \compi)^n|G|}F(\phi \bar{\phi})e^{\langle \phi,Q\bar{\phi}\rangle}d\bar{\phi} d\phi =\E F(\ell)$, where $\ell$ is the Markovian loop soup local time with intensity $\alpha=1$. This provides a connection between the
%``complex"
twisted Gaussian density and the Markovian loop soup local time: using the result by Fitzsimmons and Rosen~\cite{fr14}, we know that $\int \frac{1}{(2\pi \compi)^n|G|}F(\phi \bar{\phi})e^{\langle \phi, Q\bar{\phi}\rangle}d\bar{\phi} d\phi =\E F(\ell)$ for $\ell$ being a 1-permanental vector with a Markovian kernel $G$ as well. A polar coordinate change then reveals the density of the
%permanental vector.
1-permanental vector, in the Markovian case, in the form given in Lemma \ref{lem:density}.

%To generalize it to non-Markovian kernels with a
%positive definite symmetric part,
In contrast, our density formula also works for non-Markovian kernels with a positive definite symmetric part,  and for this
%positive definite Hermitian part,
we critically used the results derived by Brydges-Hofstad-König\cite{bhk07} in the proof above.

\vspace{.3em} \begin{rem}
A natural consequence of the above density formula is a symmetrization bound for the density.
%\footnote{The requirement that G has a PSD symmetric part guarantees $-S^{-1}$ to be PSD. Any PSD kernel should be $\frac{1}{2}$-permanental since there exists a Gaussian process $\eta$ with such a covariance structure, then $\frac{1}{2}\eta^2$ is $\frac{1}{2}$-permanental with such a kernel (this can be seen via the MGF). Two independent $\frac{1}{2}$-permanental processes add up to a 1-permanental process with the same kernel (via MGF calculation). This shows that any PSD kernel is 1-permanental and so is $-S^{-1}$. }
Let $\iota:=\frac{|2G|}{|G+G^T|}$, and denote by $\tilde{\rho}(\bar{l})$
%$\bar{\rho}(l)$ as
the density of a 1-permanental vector
%$\bar{l}$
$\bar{\ell}$ with the symmetric kernel $(-S)^{-1}$, where $S=\frac{1}{2}(Q+Q^T)$, with $Q := G^{-1}$. In the same setting as the previous lemma, it is immediate from its proof that
\[\rho(l)\leq \iota\bar{\rho}(l),\]
see equation \eqref{eq:symmetrize}.

This symmetrization bound will be used several times in the later sections. Similarly, one concludes that for any $\mathbb{C}$-valued function $f$ with a subexponential growth rate, we have $|\E[f(\ell)]|\leq \iota \E[|f(\bar{\ell})|]$:
\[|\E[f(\ell)]|=\bigg|\int_{\mathbb{R}_+^n} f(l)\rho(l) dl\bigg| \leq \int_{\mathbb{R}_+^n} |f(l)|\rho(l) dl \leq \iota \int_{\mathbb{R}_+^n} |f(l)|\bar{\rho}(l) dl = \iota\E[|f(\bar{\ell})|].\]

The integrand above is absolutely integrable for any function of subexponential growth since $\bar{\rho}(l)$ decays exponentially fast in
%$\|\ell\|$.
$\|l\|$. Moreover, the parameter $\iota$ that
%appeared
appears in these bounds is always no less than 1. To see this, note
\[\int_{\mathbb{R}_+^n} \rho(l) dl\leq \int_{\mathbb{R}_+^n} \iota \rho(l) dl\quad\Rightarrow\quad 1\leq \iota.\]

In linear algebra language, for any matrix $G$ with a positive definite symmetric part,
%positive definite Hermitian part,
we always have $|\frac{1}{2}(G+G^T)|\leq |G|$. This is a well-known fact called the Ostrowski-Taussky inequality in the literature~\cite{ot51}.

Another consequence of the proof above is the subexponentiality of $1$-permanental vectors. In the same setting as Lemma \ref{lem:density}, for the $1$-permanental vector $\ell$, we know that
\[\P(\ell\geq t)\leq \frac{1}{|G|}\int_{\prod_i[t_i,\infty)} e^{-\lambda_{\min}(-S)\|l\|_1}dl=\frac{1}{|G|(\lambda_{\min}(-S))^n}e^{-\lambda_{\min}(-S)\|t\|_1}.\]

Since for any $k\in\mathbb{Z}_+$, a $k$-permanental process with kernel $G$ is equal in distribution to $\sum_{i=1}^k\ell_i$, where $\ell_i$'s are i.i.d. 1-permanental vectors with kernel $G$, we also have
\[\P\bigg(\sum_{i=1}^k\ell_i\geq t\bigg)\leq k\, \P\Big(\ell\geq \frac{1}{k}t\Big)\leq c\, e^{-\frac{1}{k}\lambda_{\min}(-S)\|t\|_1}.\]

Here, $c>0$ does not depend on $t$. The subexponentiality of $k$-permanental vectors will also be used in later sections. \qed
\end{rem}

\subsection{Density of conditioned 1-permanental vectors}

%Given the density formula,
%we can then study the density of a conditioned 1-permanental vector. By conditioning, we specifically refer to a 1-permanental vector $\ell$ conditioned on some coordinate equal to $r$.
The density formula allows us to study the density of a 1-permanental vector $\ell$ conditioned on some coordinate being equal to a fixed value $r$. Without loss of generality, we will consider $(\ell_2, \cdots, \ell_n\, |\, \ell_1=r)$. Let us also assume for now that $r>0$, the case $r=0$ being trivial, see \cref{rem:riszero} below.

To derive the conditional density $\rho_r(l_2, \cdots, l_n)$ of the conditioned process $(\ell_2, \cdots, \ell_n\, |\, \ell_1=r)$, let us decompose the Laplacian as
\begin{equation}
Q=\begin{pmatrix}Q_{11} & Q_{1*}^T\\
Q_{*1} & Q_{**}
\end{pmatrix}.
\label{eqn:decomp}
\end{equation}

Similarly, we
%denote $\phi_*=(\phi_2,\cdots, \phi_n)$,
let $\phi_* :=(\phi_2,\cdots, \phi_n)$,
%and in general,
and, in general, whenever we use the underscript $*$, we mean the original vector or matrix restricted to $[n]\backslash\{1\}$. We then have the following density formula, for which the rotation symmetry in $\theta_i$'s plays an important role in the proof.

\vspace{.3em} \begin{lem}[Density of conditioned 1-permanental vector]
\label{lem:conditioned}
In the same setting as Lemma \ref{lem:density}, the conditional density $\rho_r(l_2, \cdots, l_n)$ of the conditioned process $(\ell_2, \cdots, \ell_n\, |\, \ell_1=r)$ is
\[\rho_r(l_2,\cdots, l_n) =  \int_{(0,2\pi)^{n-1}}\frac{|-Q_{**}|}{(2\pi)^{n-1}}\,
\exp\Big(\langle (\phi_*+ \sqrt{r}Q_{**}^{-T}Q_{1*}), Q_{**}(\bar{\phi}_*+ \sqrt{r}Q_{**}^{-1}Q_{*1})\rangle\Big)d\theta_*.\]
\end{lem}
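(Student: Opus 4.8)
The plan is to obtain the conditional density as the ratio $\rho_r(l_2,\dots,l_n)=\rho(r,l_2,\dots,l_n)/\rho_1(r)$, where $\rho$ is the joint density furnished by \cref{lem:density} and $\rho_1$ is the marginal density of $\ell_1$, and then to simplify the resulting oscillatory integral using the rotation symmetry of the integrand, a completion of the square, and the Schur-complement identity for block determinants. The existence of $\ell$ with kernel $G$ is inherited from the hypotheses of \cref{lem:density}. For the marginal, restricting the Laplace transform $\E[e^{-\langle\lambda,\ell\rangle}]=|I+\Lambda G|^{-1}$ to $\lambda=(\lambda_1,0,\dots,0)$ collapses the determinant to $1+\lambda_1 G_{11}$, so $\ell_1$ is exponential with mean $G_{11}$ (and $G_{11}>0$, since $G$ has positive definite symmetric part), giving $\rho_1(r)=G_{11}^{-1}e^{-r/G_{11}}$ for $r>0$.

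Next I would exploit the rotational invariance. The integrand $\exp\!\big(\sum_{i,j}Q_{ij}\sqrt{l_il_j}\,e^{\compi(\theta_i-\theta_j)}\big)$ in \eqref{eqn:densityformula} depends on the phases only through the differences $\theta_i-\theta_j$, hence is invariant under the diagonal shift $\theta\mapsto\theta+c\,\allone$, and is $2\pi$-periodic in each coordinate. Consequently, integrating out $\theta_1$ contributes a factor $2\pi$ and we may fix $\theta_1=0$. Evaluating $\rho$ at $(r,l_2,\dots,l_n)$ and setting $\phi_i=\sqrt{l_i}e^{\compi\theta_i}$ for $i\ge 2$ and $\tilde\phi:=(\sqrt r,\phi_*)$, this gives
\[
\rho(r,l_*)=\frac{1}{(2\pi)^{n-1}|G|}\int_{(0,2\pi)^{n-1}}e^{\langle\tilde\phi,\,Q\bar{\tilde\phi}\rangle}\,d\theta_*.
\]

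I would then complete the square in $\phi_*$. Using the block decomposition \eqref{eqn:decomp}, expand $\langle\tilde\phi,Q\bar{\tilde\phi}\rangle=rQ_{11}+\sqrt r\,Q_{1*}^{T}\bar\phi_*+\sqrt r\,\phi_*^{T}Q_{*1}+\phi_*^{T}Q_{**}\bar\phi_*$ and verify the algebraic identity (the routine step) that this equals $\langle\phi_*+\sqrt r\,Q_{**}^{-T}Q_{1*},\,Q_{**}(\bar\phi_*+\sqrt r\,Q_{**}^{-1}Q_{*1})\rangle+r\big(Q_{11}-Q_{1*}^{T}Q_{**}^{-1}Q_{*1}\big)$. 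Here I would note that $Q_{**}$ is invertible, because $-Q_{**}$ is a principal submatrix of $-Q=G^{-1}$, which has positive definite symmetric part (established in the proof of \cref{lem:density}), so $-Q_{**}$ does too. The scalar $Q_{11}-Q_{1*}^{T}Q_{**}^{-1}Q_{*1}$ is the Schur complement $Q/Q_{**}$; by the block-inverse formula it equals $1/(Q^{-1})_{11}=-1/G_{11}$ since $Q=-G^{-1}$. Hence $e^{\langle\tilde\phi,Q\bar{\tilde\phi}\rangle}=e^{-r/G_{11}}\exp\!\big(\langle\phi_*+\sqrt r\,Q_{**}^{-T}Q_{1*},\,Q_{**}(\bar\phi_*+\sqrt r\,Q_{**}^{-1}Q_{*1})\rangle\big)$.

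Finally, dividing by $\rho_1(r)=G_{11}^{-1}e^{-r/G_{11}}$ cancels the exponential prefactor, leaving $\rho_r(l_*)=\frac{G_{11}}{(2\pi)^{n-1}|G|}\int(\cdots)\,d\theta_*$, and it remains to identify $G_{11}/|G|$ with $|-Q_{**}|$. The determinant form of the Schur identity gives $|Q|=|Q_{**}|\cdot(Q/Q_{**})$; combined with $|Q|=|-G^{-1}|=(-1)^n/|G|$ and $Q/Q_{**}=-1/G_{11}$, this yields $|Q_{**}|=(-1)^{n-1}G_{11}/|G|$, i.e.\ $|-Q_{**}|=(-1)^{n-1}|Q_{**}|=G_{11}/|G|$, which is exactly the claimed normalization. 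The main obstacle I anticipate is not any single computation but bookkeeping consistency: the sign in $Q=-G^{-1}$ must be tracked faithfully through both the Schur complement scalar and the $(-1)^{n-1}$ in $|-Q_{**}|$, and the reduction by rotational invariance should be made rigorous as an identity of integrals on the torus; both are routine with care.
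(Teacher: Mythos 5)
Your proposal is correct and follows essentially the same route as the paper's proof: evaluate the joint density at $l_1=r$, use the rotational invariance of the integrand to integrate out $\theta_1$, divide by the exponential marginal of $\ell_1$, complete the square in $\phi_*$, and identify the normalization via the Schur/adjugate identity $G_{11}=|G|\,|-Q_{**}|$. Your two small departures --- deriving the exponential marginal directly from $|I+\Lambda G|=1+\lambda_1 G_{11}$ instead of citing the literature, and getting invertibility of $Q_{**}$ from the fact that a principal submatrix of a matrix with positive definite symmetric part again has positive definite symmetric part --- are both valid and, if anything, slightly more self-contained than the paper's versions.
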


\begin{proof}
First, plug in $l_1=r$ in the density formula for $\ell$. We know $\rho(r, l_2,\cdots, l_n)$ equals to
\[\int_{(0,2\pi)^n}\frac{e^{Q_{11}r}}{(2\pi)^n|G|} \exp\bigg(\sum_{i,j\neq 1} Q_{ij}\sqrt{l_i l_j}e^{\compi (\theta_i-\theta_j)}
+\sum_{i\neq 1}\big(Q_{i1}\sqrt{l_i r}e^{\compi(\theta_i-\theta_1)}+Q_{1i}\sqrt{l_i r}e^{\compi(\theta_1-\theta_i)}\big)
\bigg)d\theta.
\]

Now use the change of measure $\theta_i'=\theta_i-\theta_1, \forall i\neq 1$ and $\theta_1'=\theta_1$. We know it's further equal to
\[\int_{(0,2\pi)}\int_{(-\theta_1',-\theta_1'+2\pi)^{n-1}}\frac{e^{Q_{11}r}}{(2\pi)^n|G|}\, \exp\bigg(\sum_{i,j\neq 1} Q_{ij}\sqrt{l_i l_j}e^{\compi (\theta_i'-\theta_j')}
+\sum_{i\neq 1}\big(Q_{i1}\sqrt{l_i r}e^{\compi\theta_i'} +Q_{1i}\sqrt{l_i r}e^{-\compi\theta_i'}\big)
\bigg)d\theta_*'d\theta_1'.\]

But, given any $\theta_1'$, the inner integral is invariant whether it's integrated over $(-\theta_1',-\theta_1'+2\pi)^{n-1}$ or over $(0,2\pi)^{n-1}$, due to rotation symmetry. Hence we conclude that
\begin{equation*}
\begin{aligned}
\rho(r,l_2,& \cdots,l_n)\\
=& \int_{(0,2\pi)}\int_{(0,2\pi)^{n-1}}\frac{e^{Q_{11}r}}{(2\pi)^n|G|}\, \exp\bigg(\sum_{i,j\neq 1} Q_{ij}\sqrt{l_i l_j}e^{\compi (\theta_i'-\theta_j')}
+\sum_{i\neq 1}\big(Q_{i1}\sqrt{l_i r}e^{\compi \theta_i'}+Q_{1i}\sqrt{l_i r}e^{-\compi \theta_i'}\big)
\bigg)d\theta_*'d\theta_1'\\
=& \int_{(0,2\pi)^{n-1}}\frac{e^{Q_{11}r}}{(2\pi)^{n-1}|G|}\, \exp\bigg(\sum_{i,j\neq 1} Q_{ij}\sqrt{l_i l_j}e^{\compi (\theta_i'-\theta_j')}
+\sum_{i\neq 1}\big(Q_{i1}\sqrt{l_i r}e^{\compi \theta_i'}+Q_{1i}\sqrt{l_i r}e^{-\compi \theta_i'}\big)
\bigg)d\theta_*'.\\
\end{aligned}
\end{equation*}

Marginally, $\ell_1$ is distributed as an exponential random variable with mean $G_{11}$ as is shown in \cite{ek07}. Hence, we conclude that
\begin{equation*}
\begin{aligned}
\rho_r(l_2, & \cdots, l_n)
=G_{11}e^{G_{11}^{-1}r}\rho(r, l_2,\cdots, l_n) \\
=& \int_{(0,2\pi)^{n-1}}\frac{G_{11} e^{(Q_{11}+G_{11}^{-1})r}}{(2\pi)^{n-1}|G|}\, \exp\bigg(\sum_{i,j\neq 1} Q_{ij}\sqrt{l_i l_j}e^{\compi (\theta_i-\theta_j)}
+\sum_{i\neq 1}\big(Q_{i1}\sqrt{l_i r}e^{\compi \theta_i}+Q_{1i}\sqrt{l_i r}e^{-\compi \theta_i}\big)
\bigg)d\theta_*.\\
\end{aligned}
\end{equation*}

By the decomposition of the Laplacian $Q$, we have
\[\rho_r(l_2, \cdots, l_n)
=\int_{(0,2\pi)^{n-1}}\frac{G_{11} e^{(Q_{11}+G_{11}^{-1})r}}{(2\pi)^{n-1}|G|}\,
\exp\Big(\langle \phi_*, Q_{**}\bar{\phi}_*\rangle
+\sqrt{r}\big(\langle Q_{*1}, \phi_*\rangle+\langle Q_{1*}, \bar{\phi}_*\rangle\big)
\Big)d\theta_*.\]

We claim that $Q_{**}$ is invertible. To see this, note that by the adjugate formula $G=|-Q|^{-1}\textnormal{adj}(-Q)$, we have $G_{11}=\frac{|-Q_{**}|}{|-Q|}=|G||-Q_{**}|$. But $G_{11}>0$ and $|G|>0$ since $|G|$ is 1-permanental, so we conclude that $|-Q_{**}|>0$ and $Q_{**}$ is indeed invertible. Therefore we have
\begin{equation*}
\begin{aligned}
\rho_r(l_2, & \cdots, l_n)\\
=& \int_{(0,2\pi)^{n-1}}\frac{G_{11} e^{(Q_{11}+G_{11}^{-1})r}}{(2\pi)^{n-1}|G|}\,
\exp\bigg(\langle \phi_*, Q_{**}\bar{\phi}_*\rangle
+\big( \langle Q_{**}^T\phi_*, \sqrt{r}Q_{**}^{-1}Q_{*1}\rangle+\langle \sqrt{r}Q_{**}^{-T}Q_{1*}, Q_{**}\bar{\phi}_*\rangle\big)
\bigg)d\theta_*\\
=& \int_{(0,2\pi)^{n-1}}\frac{G_{11} e^{(Q_{11}+G_{11}^{-1}-Q_{1*}^T Q_{**}^{-1}Q_{*1})r}}{(2\pi)^{n-1}|G|}\,
\exp\bigg(\langle (\phi_*+ \sqrt{r}Q_{**}^{-T}Q_{1*}), Q_{**}(\bar{\phi}_*+ \sqrt{r}Q_{**}^{-1}Q_{*1})\rangle\bigg)d\theta_*.
\end{aligned}
\end{equation*}

Now, by the block matrix inverse formula, if $Q_{**}$ is invertible we have
\[G_{11}=(-Q_{11}+Q_{1*}^TQ_{**}^{-1}Q_{*1})^{-1}.\]

Therefore, $Q_{11}+G_{11}^{-1}-Q_{1*}^TQ_{**}^{-1}Q_{*1}=0$ and we conclude that
\[\rho_r(l_2, \cdots, l_n)=\int_{(0,2\pi)^{n-1}}\frac{|-Q_{**}|}{(2\pi)^{n-1}}\,
\exp\bigg(\langle (\phi_*+ \sqrt{r}Q_{**}^{-T}Q_{1*}), Q_{**}(\bar{\phi}_*+ \sqrt{r}Q_{**}^{-1}Q_{*1})\rangle\bigg)d\theta_*.\]

One can apply the same arguments as those in the proof of Lemma \ref{lem:density} to see that the integrand is absolutely integrable. Hence, the lemma is proved.
\end{proof}

\vspace{.3em} \begin{rem} \label{rem:riszero}
We remark that when $r=0$, we have
\[\rho_0(l_2, \cdots, l_n) = \int_{(0,2\pi)^{n-1}}\frac{|-Q_{**}|}{(2\pi)^{n-1}}\,
\exp\big(\langle \phi_*, Q_{**}\bar{\phi}_*\rangle\big)d\theta_*.\]

Computing the Laplace transform, we see that the conditioned 1-permanental process
\[
\{\ell_2,\cdots, \ell_n\,|\, \ell_1=0\}
\]
should again be a 1-permanental process with kernel $(-Q_{**})^{-1}$, which has positive definite symmetric part.
%positive definite Hermitian part.

In the Markovian setting, this property has the following loop soup interpretation. A loop soup conditioned on not visiting some node $a$ is just an unconditioned loop soup on the loops that do not visit $a$, by the independence property of the Poissonian point process. This observation is mentioned in several previous works on loop soup (for example, \cite{sznitman12, fr14}). However, it is not a priori obvious that every 1-permanental $G$ with a positive definite symmetric part
%positive definite Hermitian part
will also satisfy a similar property, which is what we have established.
\qed
\end{rem}

Another interesting observation is that when $G$ is positive definite
%and symmetric,
we have the following conditional law for squared Gaussian vectors.
\vspace{.3em} \begin{cor}[Conditioned $\chi$-squared process]
\label{cor:chi-square}
For any non-degenerate Gaussian vector $\tilde{U}\sim\mathcal{N}(0,G)$, let $U\sim\mathcal{N}(-\sqrt{r}Q_{**}^{-1}Q_{*1},(-Q_{**})^{-1})$. We have
\[(\tilde{U}_*^2\,|\,\tilde{U}_1^2=r)\,\stackrel{d.}{=}\,U^2.\]
\end{cor}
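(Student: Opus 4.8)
The plan is to derive the claimed conditional law by comparing two density formulas, both specialized to the positive definite case. First I would recall that when $G$ is positive definite, Remark~\ref{rem:rem-psd-lem:density} tells us that $\frac12\tilde U^2$ is $\frac12$-permanental with kernel $G$, hence $\tilde U^2$ itself is the square of a Gaussian and is related to a $1$-permanental vector only indirectly; the cleaner route is to work directly with the Gaussian density. Specifically, write the joint density of $\tilde U = (\tilde U_1, \tilde U_*)$ and condition on $\tilde U_1 = \pm\sqrt r$. A standard Gaussian conditioning computation gives that $(\tilde U_* \mid \tilde U_1 = s)$ is Gaussian with mean $s\,G_{*1}/G_{11}$ and covariance the Schur complement $G_{**} - G_{*1}G_{1*}^T/G_{11}$. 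The key identities I need, which follow from the block-inverse formulas already invoked in the proof of Lemma~\ref{lem:conditioned}, are: the Schur complement of $G_{11}$ in $G$ equals $(-Q_{**})^{-1}$ (since $Q = G^{-1}$, the lower-right block of $G^{-1}$ is the inverse of the Schur complement of $G_{11}$, i.e. $Q_{**} = (G_{**} - G_{*1}G_{1*}^T/G_{11})^{-1}$), and $G_{*1}/G_{11} = -Q_{**}^{-1}Q_{*1}$ (from $G_{*1} = -(\text{Schur})\,Q_{*1}G_{11}$ rearranged via the off-diagonal block of the inverse formula). Plugging these in shows $(\tilde U_* \mid \tilde U_1 = s) \stackrel{d}{=} N(-s Q_{**}^{-1}Q_{*1}, (-Q_{**})^{-1})$, which for $s = \sqrt r$ is exactly the law of $U$ in the statement.

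The second ingredient handles the squaring. The event $\{\tilde U_1^2 = r\}$ is the event $\{\tilde U_1 \in \{+\sqrt r, -\sqrt r\}\}$, and by symmetry of the centered Gaussian, conditioning on $\tilde U_1 = +\sqrt r$ versus $\tilde U_1 = -\sqrt r$ gives laws for $\tilde U_*$ that are reflections of each other ($N(\sqrt r\,G_{*1}/G_{11}, \cdot)$ versus $N(-\sqrt r\,G_{*1}/G_{11}, \cdot)$). Since squaring kills the sign, $(\tilde U_*^2 \mid \tilde U_1^2 = r)$ has the same law as $(\tilde U_*^2 \mid \tilde U_1 = \sqrt r) \stackrel{d}{=} U^2$ with $U$ as above. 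To make the conditioning on a measure-zero event rigorous I would phrase it via the regular conditional density: write the joint density of $(\tilde U_1^2, \tilde U_*)$ by change of variables $r = u_1^2$, observe the two preimages contribute symmetric terms, and read off the conditional density of $\tilde U_*$ given $\tilde U_1^2 = r$ as the even part of the Gaussian conditional density, which upon squaring again yields the law of $U^2$.

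An alternative, shorter route — and perhaps the one most in the spirit of the paper — is to bypass the Gaussian computation and instead note that when $G$ is positive definite it is in particular a valid $1$-permanental kernel with positive definite symmetric part (trivially, since $G = G^T$), so Lemma~\ref{lem:conditioned} applies and gives the conditional density $\rho_r(l_2,\dots,l_n)$ of a $1$-permanental $(\ell_* \mid \ell_1 = r)$. On the other hand, a $1$-permanental vector with positive definite kernel $H$ is the sum of two independent copies of $\frac12 \eta^2$, $\eta \sim N(0,H)$; and a direct integral computation (polar coordinates, exactly as in the proof of Lemma~\ref{lem:density}) identifies the integrand in Lemma~\ref{lem:conditioned} with the density of $U^2$ where $U$ has the stated Gaussian law — but this requires matching the "$\frac12$" normalizations and I would rather just present the Gaussian conditioning argument, which is self-contained. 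I expect the main obstacle to be purely bookkeeping: correctly tracking the block-inverse identities relating $(G_{11}, G_{*1}, G_{**})$ to $(Q_{11}, Q_{*1}, Q_{**})$ so that the Schur complement and the regression coefficient come out with exactly the signs and inverses appearing in the statement, and phrasing the conditioning on $\{\tilde U_1^2 = r\}$ (a null event) cleanly via regular conditional densities and the sign symmetry.
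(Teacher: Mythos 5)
Your proof is correct, but it takes a genuinely different route from the paper's. The paper stays inside its permanental framework: it specializes the conditional density formula of \cref{lem:conditioned} to the symmetric case, recognizes the resulting integrand as the law of $(U-\sqrt{r}Q_{**}^{-1}Q_{*1})^2+V^2$ for independent Gaussians $U,V$, and then uses the rotation-symmetry observation from the proof of \cref{lem:conditioned} (that, conditionally on $\ell_1=r$, the vector $\ell_*$ is independent of the phase $\theta_1$) to replace the conditioning $\{\tilde U_1^2+\tilde V_1^2=r\}$ by $\{\tilde U_1^2=r,\ \tilde V_1^2=0\}$ and peel off the $V$-part. You instead bypass \cref{lem:conditioned} entirely and run the standard Gaussian conditioning computation: $(\tilde U_*\mid\tilde U_1=s)\sim\mathcal N(sG_{*1}/G_{11},\,G_{**}-G_{*1}G_{1*}^T/G_{11})$, the block-inverse identities turn the Schur complement into $(-Q_{**})^{-1}$ and the regression coefficient into $-Q_{**}^{-1}Q_{*1}$, and the sign symmetry of the centered Gaussian shows that conditioning on $\tilde U_1^2=r$ rather than $\tilde U_1=\sqrt r$ only replaces the conditional law by an equal-weight mixture of two reflected Gaussians, which squaring cannot distinguish. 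Your argument is shorter and more elementary, and (unlike the paper's proof, whose $\mathcal N(0,\tfrac12 G)$ normalizations do not quite match the statement) it lands exactly on the claimed parameters; what it does not buy is the demonstration of the rotation-symmetry trick, which is the point the authors emphasize and which is what survives when $G$ is not symmetric and no underlying Gaussian vector exists. One small correction: for your Schur-complement identities to carry the signs you state, you need the paper's convention $Q=-G^{-1}$ (as in \cref{lem:density}), not ``$Q=G^{-1}$'' as written in your parenthetical; with $G^{-1}=-Q$ the lower-right block identity reads $-Q_{**}=(G_{**}-G_{*1}G_{1*}^T/G_{11})^{-1}$ and the off-diagonal one gives $G_{*1}/G_{11}=-Q_{**}^{-1}Q_{*1}$, which is what you ultimately use.
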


\begin{proof}
By Lemma \ref{lem:conditioned}, in the Gaussian case, we have
\begin{equation*}
\begin{aligned}
\rho_r(l_2, & \cdots, l_n)\\
=& \int_{(0,2\pi)^{n-1}}\frac{|-Q_{**}|}{(2\pi)^{n-1}}\,
\exp\bigg(\big[ \big((u_*+ \sqrt{r}Q_{**}^{-1}Q_{*1})+\compi v_*\big), Q_{**}\big((u_*+ \sqrt{r}Q_{**}^{-1}Q_{*1})-\compi v_*\big)\big]_\mu\bigg)d\theta_*\\
=& \int_{(0,2\pi)^{n-1}}\frac{|-Q_{**}|}{(2\pi)^{n-1}}\,
\exp\bigg(\big[ (u_*+ \sqrt{r}Q_{**}^{-1}Q_{*1}), Q_{**}(u_*+ \sqrt{r}Q_{**}^{-1}Q_{*1})\big]_\mu+\langle v_*, Q_{**}v_*\rangle\bigg)d\theta_*.
\end{aligned}
\end{equation*}

This is exactly the law of $\big\{(U-\sqrt{r}Q_{**}^{-1}Q_{*1})^2+V^2\big\}$, for $U, V$ being independent $\mathcal{N}(0,\frac{1}{2}(-Q_{**})^{-1})$ vectors. Since we know that $\ell\stackrel{d.}{=} \{\tilde{U}^2+\tilde{V}^2:\,i\in[n]\}$, where $\tilde{U}, \tilde{V}$ are independent $\mathcal{N}(0,\frac{1}{2}G)$ vectors, we have

\[\{\ell_*\,|\,\ell_1=r\}\stackrel{d.}{=} \{\tilde{U}_*^2+\tilde{V}_*^2\,|\, \tilde{U}_1^2+\tilde{V}_1^2=r\}\stackrel{d.}{=} \big\{(U-\sqrt{r}Q_{**}^{-1}Q_{*1})^2+V^2\big\}.\]

We claim that
\[\{\tilde{U}_*^2+\tilde{V}_*^2\,|\, \tilde{U}_1^2+\tilde{V}_1^2=r\}\stackrel{d.}{=}\{\tilde{U}_*^2+\tilde{V}_*^2\,|\, \tilde{U}_1^2=r,\tilde{V}_1^2=0\}.\]

To see this, it suffices to show that, conditioned on $\ell_1=r$,  $\ell_*$ is independent of $\theta_1$, and therefore independent of $(U_1, V_1)$. This is readily concluded from the proof of Lemma \ref{lem:conditioned}. Indeed, fixing any $\theta_1$, the inner integral there does not depend on $\theta_1$. Now note that $\{\tilde{V}_*^2\,|\,\tilde{V}_1^2=0\}\stackrel{d.}{=}V^2$. It is immediate that
\[\{\tilde{U}_*^2\,|\, \tilde{U}_1^2=r\}\stackrel{d.}{=}\big\{(U-\sqrt{r}Q_{**}^{-1}Q_{*1})^2\big\}.\]
\end{proof}

A special version of this corollary, applying to a certain family of Gaussian free fields, was used earlier in studying isomorphism theorems~\cite{sznitman12}. Proving this theorem by directly working with the $\chi$-squared process seems to be hard. Here, the rotation symmetry trick (i.e., Lemma \ref{lem:conditioned}) helps us circumvent this issue.

\section{Non-reversible isomorphism theorems}

In the Markovian setting, the 1-permanental process is equal in distribution to the loop soup local time when the intensity of the soup is given by the loop measure (instead of $1/2$ of the loop measure). In light of this, the following theorem can be seen as a generalization of Le Jan's isomorphism theorem that connects the loop soup local time with a certain Gaussian integral in the scenario when the intensity
%if
is $1/2$ of the loop measure (see, e.g., Theorem 4.5 in \cite{sznitman12}).

\vspace{.3em} \begin{thm}[Non-reversible Le Jan's isomorphism theorem]
\label{thm:lejan}
In the same setting as Lemma \ref{lem:density}, for any real function on $\mathbb{R}_+^n$ with a subexponential growth rate, we have
\begin{equation}
\E[f(\ell)] = \int_{\mathbb{C}^n} \frac{1}{(2\pi\compi)^n|G|}\,f(\phi\bar{\phi})\, e^{\langle \phi, Q\bar{\phi}\rangle} d\bar{\phi} d\phi.
\label{eqn:lejannr}
\end{equation}

Here $\phi\bar{\phi}:=(\phi_i\bar{\phi}_i: i\in V)$. Moreover, for any $k\in\mathbb{N}_+$, the $k$-permanental vector $\ell^{(k)}$ satisfies
\[\E\big[f(\ell^{(k)})\big]=\int_{\mathbb{C}^{n\times k}}\frac{1}{(2\pi\compi)^{nk}|G|^k}f\big(\phi^{(1)}\overline{\phi^{(1)}}+\cdots+\phi^{(k)}\overline{\phi^{(k)}}\big)e^{\sum_{i=1}^k\langle \phi^{(i)}, Q\overline{\phi^{(i)}}\rangle} \prod_i d\overline{\phi^{(i)}} d\phi^{(i)}.\]
\end{thm}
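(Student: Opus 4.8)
The plan is to reduce the identity \eqref{eqn:lejannr} to the density formula of \cref{lem:density} by a polar-coordinate change, and then to obtain the $k$-permanental statement by convolving $k$ independent copies. First I would rewrite the right-hand side of \eqref{eqn:lejannr} using the differential-form identity recorded before \cref{lem:density}, namely $d\bar\phi_i \wedge d\phi_i = \compi\, dl_i \wedge d\theta_i$ in one complex dimension, so that $\prod_i d\bar\phi_i d\phi_i = \compi^n\, dl\, d\theta$ under $\phi_i = \sqrt{l_i}\,e^{\compi\theta_i}$. This converts the factor $(2\pi\compi)^{-n}$ into $(2\pi)^{-n}$ and turns the integral over $\mathbb{C}^n$ into an integral over $(l,\theta) \in \mathbb{R}_+^n \times (0,2\pi)^n$. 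Next, since $\phi_i\bar\phi_i = l_i$, the argument of $f$ becomes simply $l$, and the quadratic form becomes $\langle \phi, Q\bar\phi\rangle = \sum_{i,j} Q_{ij}\sqrt{l_i l_j}\,e^{\compi(\theta_i - \theta_j)}$. Performing the $\theta$-integration first and invoking the definition of $\rho(l)$ from \cref{lem:density}, the right-hand side of \eqref{eqn:lejannr} equals $\int_{\mathbb{R}_+^n} f(l)\rho(l)\,dl$, which is exactly $\E[f(\ell)]$ once we know $\rho$ is the density of $\ell$.

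The one point requiring care is the justification of Fubini's theorem and of the change of variables: the integrand is not absolutely integrable a priori because $f$ only has subexponential growth, not integrability. Here I would use the symmetrization bound from the remark following \cref{lem:density}: $|\rho(l)| \le \gamma\bar\rho(l)$ where $\bar\rho$ is the density of the symmetric-kernel $1$-permanental vector, which decays like $e^{-\lambda_{\min}(-S)\|l\|_1}$. Combined with the subexponential growth of $f$, this gives $\int_{\mathbb{R}_+^n} |f(l)|\,|\rho(l)|\,dl < \infty$, and likewise the absolute value of the complex integrand is dominated by $\frac{1}{|G|}|f(l)|\,e^{\sum_{i,j}S_{ij}\sqrt{l_il_j}e^{\compi(\theta_i-\theta_j)}}$, whose integral over $\mathbb{R}^{2n}$ is finite by the spectral bound in the proof of \cref{lem:density}. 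This licenses both Fubini and the polar change of variables. I expect this domination estimate — making precise that subexponential growth of $f$ against Gaussian-type decay of $\bar\rho$ is enough — to be the main (though routine) obstacle; everything else is bookkeeping.

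For the $k$-permanental extension, I would use that a $k$-permanental vector $\ell^{(k)}$ with kernel $G$ is equal in distribution to $\ell_1 + \cdots + \ell_k$ for i.i.d.\ $1$-permanental $\ell_i$ with kernel $G$, so that $\E[f(\ell^{(k)})] = \E[f(\ell_1 + \cdots + \ell_k)]$. Applying the density formula to each $\ell_i$ independently and using that the density of the sum is the $k$-fold convolution of $\rho$, the corresponding $\mathbb{C}^{nk}$ integral factors as a product of $k$ copies of the single-copy representation, with $f$ evaluated at $\sum_{i=1}^k \phi^{(i)}\overline{\phi^{(i)}} = \sum_{i=1}^k l^{(i)}$; the determinant prefactor becomes $|G|^{-k}$ and the exponent becomes $\sum_{i=1}^k \langle \phi^{(i)}, Q\overline{\phi^{(i)}}\rangle$. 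The absolute-integrability needed to run Fubini on the product space follows by applying the same symmetrization bound to each factor, since the convolution of $k$ exponentially decaying densities still decays exponentially, so subexponential $f$ remains integrable against it.
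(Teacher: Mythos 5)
Your proposal is correct and follows essentially the same route as the paper: the identity \eqref{eqn:lejannr} is obtained by reversing the polar change of variables $\phi_i=\sqrt{l_i}e^{\compi\theta_i}$ to identify the right-hand side with $\int f(l)\rho(l)\,dl$ via \cref{lem:density}, and the $k$-permanental case follows from writing $\ell^{(k)}$ as a sum of $k$ i.i.d.\ $1$-permanental vectors. Your explicit domination argument via the symmetrization bound $|\rho(l)|\le\gamma\bar\rho(l)$ to justify Fubini is a welcome addition that the paper leaves implicit.
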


\begin{proof}
We know that
\begin{equation*}
\begin{aligned}
\E[f(\ell)]
&= \int_{\mathbb{R}_+^n} \int_{(0,2\pi)^n}\frac{1}{(2\pi)^n|G|}\, f(l)\, e^{\sum_{i,j} Q_{ij}\sqrt{l_i}\sqrt{l_j}e^{\compi (\theta_i-\theta_j)}}d\theta dl \\
&= \int_{\mathbb{C}^n} \frac{1}{(2\pi\compi)^n|G|}\, f(\phi\bar{\phi})\, e^{\langle \phi, Q\bar{\phi}\rangle} d\bar{\phi} d\phi . \\
\end{aligned}
\end{equation*}

Here we used change of variable $\phi_i=\sqrt{l_i}e^{\compi \theta_i}$. Now, using the formula repeatedly and the fact that $\ell^{(k)}$ is distributed as the sum of $k$ independent 1-permanental vectors proves the theorem.
\end{proof}

Using the distributional equivalence between a $1$-permanental vector and loop soup local time when the intensity of the soup is given by the loop measure, the probabilistic content of \cref{thm:lejan} can be equivalently stated in terms of loop soup local times. When the Markov chain is reversible, the right-hand side of equation \eqref{eqn:lejannr} becomes exactly complex Gaussian density. To see this connection, observe that we have $Q = Q^\top$ in the reversible case, and so
\( (\pi)^{ - n} e^{\langle \phi, Q\bar{\phi}\rangle} \)
can be written as a product of two Gaussians, as
\[
\frac{1}{(\pi)^{\frac{n}{2}}} e^{\langle u, u \rangle}
\frac{1}{(\pi)^{\frac{n}{2}}} e^{\langle v, v \rangle},
\]
where $\phi = u + \compi v$. The theorem therefore implies that $\phi\bar{\phi}$ where $\phi\sim\mathcal{CN}(0,G)$\footnote{It is standard notation for a
complex Gaussian that $\phi:=u+\compi v\sim\mathcal{CN}(0,\Gamma)$ iff \[\begin{pmatrix}
u\\ v
\end{pmatrix}\sim\mathcal{N}\bigg(\begin{pmatrix}
0\\ 0
\end{pmatrix},\frac{1}{2}\begin{pmatrix}
    \Re(\Gamma) & -\Im(\Gamma) \\
-\Im(\Gamma) & \Re(\Gamma)
\end{pmatrix}\bigg).\]} and $\ell$, the $1$-permanental vector, are equal in distribution in this case. This special case is now seen to be a consequence of the reversible Le Jan's isomorphism theorem, i.e., Theorem 4.5 in~\cite{sznitman12}. It is in this sense that
\cref{thm:lejan} can be seen as a generalization
to the non-reversible case of Theorem 4.5 in~\cite{sznitman12}.

%Alternatively, one can prove this lemma via the moment formulas in \cite{bis09}. First, prove it's true for all finite-degree polynomials in $\ell$ using moment formulas. Then use approximation arguments for more general functions.

In the
%remaining part
rest of this section, we are going to give a unified proof of several non-reversible isomorphism theorems. There will be two ingredients important in our proof: one is a pair of Ward identities for twisted Gaussian measure along the lines of those in \cite{bhs21}, and the other is the previously derived density formula in equation \eqref{eqn:densityformula}. In this way, we obtain non-reversible versions of Dynkin's isomorphism theorem~\cite{eisenbaum13, lejan08}, the generalized second Ray-Knight isomorphism theorem~\cite{ek09}, and Eisenbaum's isomorphism theorem (which is new).

\subsection{Ward identities for non-reversible Markov chains}

Let $P$ denote an irreducible transition probability matrix for a Markov chain on the state space $[n]$, and let the stationary distribution be denoted by $\pi$. Let $Q=\Pi(P-I)$ denote the normalized Laplacian, where $\Pi$ denotes the diagonal matrix with diagonal entries given by $\pi$. We think of $Q$ as the rate matrix of $\{X_s:\, s\geq 0\}$, a continuous-time irreducible Markov chain on the finite state space $[n]$, starting at $X_0=a$. Define the local time of the Markov chain as
%\[\Ell_t^i=\int_0^t \ind\{X_s=i\}ds.\]
\begin{equation}
\Ell_t^i=\int_0^t \ind\{X_s=i\}ds.
\label{eqn:localtime}
\end{equation}

For any $r\geq 0$, let the inverse local time be $\inv(r):=\inf\{t : \Ell_t^a>r\}$. We will consider the following unnormalized (since $-Q$ is only positive semi-definite here) twisted Gaussian measure
\[d\mu(\phi)= \pi^{-n} e^{\langle \phi, Q\bar{\phi}\rangle}dudv=(2\pi\compi)^{-n}e^{\langle \phi, Q\bar{\phi}\rangle}d\bar{\phi} d\phi .\]

Although the measure is unnormalized, when $g(\phi):\mathbb{C}^n\rightarrow \mathbb{C}$ is sub-Gaussian, it is still integrable with respect to the measure $d\mu(\phi)$. To see this, note that
\[\big|[ g(\phi)]_\mu\big|
\leq \int_{\orth} \pi^{-n} e^{-c(\|u\|^2+\|v\|^2)}e^{\langle \phi, S\bar{\phi}\rangle}dudv
\leq \int_\orth \pi^{-n} e^{-c(\|u\|^2+\|v\|^2)}dudv<\infty.\]

Here $[\cdot]_\mu$ denotes integration with respect to $d\mu(\phi)$
%\footnote{We also used this notation for
%the indefinite real inner product on $\mathbb{C}^n$. Without ambiguity, when it acts on a single component, one should interpret it as an expectation operator; when it acts on two components, one should interpret it as an inner product.},
and
\begin{equation}    \label{eq:defineS}
S=\frac{1}{2}(Q+Q^T)
\end{equation}
denotes the symmetric part of $Q$, which we note satisfies $S\preceq 0$ in the Loewner partial order.
%It suffices for a function
%$f(\phi)$ to decay faster than any polynomial tail to guarantee the integrability of it under the measure $\mu$, i.e., to require that for every $d>0$, there exists some constant $c_d>0$ such that $|f(l)|\leq c_d \|l\|^{-d}$.

%We will dub the functions in $C^\infty(\mathbb{R}_+^n)$ with sub-polynomial tails for all orders of derivatives as having property $\mathfrak{F}$. Similarly, we say $g(\phi)$ has property $\mathfrak{G}$ if it's in $C^\infty(\mathbb{C}^n)$ and for any $k\in\mathbb{N}$, there exists some $d_k>0$ and $b_k>0$ such that $|g^{(k)}(\phi)|\leq b_k \|\phi\|^{d_k}$.
A function $f:\mathbb{C}^n\rightarrow \mathbb{C}$ or $f:\mathbb{R}_+^n\rightarrow \mathbb{C}$ is said to have sub-polynomial tails if for every $d>0$, there exists some constant $c_d>0$ such that $|f(l)|\leq c_d \|l\|^{-d}$. We will dub the functions in $C^\infty(\mathbb{R}_+^n)$ or $C^\infty(\mathbb{C}^n)$ with sub-polynomial tails for all orders of derivatives as having property $\mathfrak{F}$. Similarly we say that a function $g:\mathbb{C}^n\rightarrow \mathbb{C}$ or $g:\mathbb{R}_+^n\rightarrow \mathbb{C}$ has polynomial growth if there exist some $d > 0$, $b > 0$, and $c\in (0,\infty)$ such that $|g(\phi)|\leq b \|\phi\|^{d}$ for all $\phi$ that is large enough, i.e., $\|\phi\|>c$. We say that $g(\phi)$ in $C^\infty(\mathbb{R}_+^n)$ or $C^\infty(\mathbb{C}^n)$ has property $\mathfrak{G}$ if for any $k\in\mathbb{N}$, there exists some $d_k>0$, $b_k>0$ and $c_k \in (0,\infty)$  such that $|g^{(k)}(\phi)|\leq b_k \|\phi\|^{d_k}$ for all $\|\phi\|>c_k$, i.e. if every derivative of $g$ has polynomial growth.
%(((The preceding paragraph has been modified significantly. Please check for accuracy.)))
%Similarly we say $g(\phi)$
%in $C^\infty(\mathbb{R}_+^n)$
%or $C^\infty(\mathbb{C}^n)$
%has property $\mathfrak{G}$ if it's in $C^\infty(\mathbb{C}^n)$ and for any $k\in\mathbb{N}$, there exists some $d_k, b_k$, and $c_k\in (0,\infty)$ such that $|g^{(k)}(\phi)|\leq b_k \|\phi\|^{d_k}$ for all $\phi$ that is large enough, i.e., $\|\phi\|>c_k$.
%(((We want $g$ to have polynomial growth for large $\phi$.)))

In the scenario where one has a killing rate $h>0$ at each node, the Laplacian is replaced by $Q_h:=Q-hI$. One can then consider the Green's function $G_h:=(hI-Q)^{-1}$ since $Q_h$ is invertible. Note that there will be a 1-permanental random variable with such a kernel~\cite{ek09}.
%We recall that a 1-permanental process with kernel $G$ is a nonnegative random vector $\ell$ with Laplace transform:
%\[\E[e^{-\langle \lambda, \ell\rangle}]=|I+\Lambda G|^{-1},\quad \forall \lambda\in\mathbb{R}_+^n.\]

Define the differential operator $T_i$ as $T_ig(\phi)=\frac{\partial }{\partial \bar{\phi}_i}g(\phi)$. Similarly, define $P_i$ as $P_ig(\phi)=\frac{\partial}{\partial \phi_i}g(\phi)$.\footnote{We can take $\frac{\partial}{\partial \phi_i}:=\frac{1}{2} \left( \frac{\partial}{\partial u_i}-\compi \frac{\partial}{\partial v_i}\right)$ and $\frac{\partial}{\partial \bar{\phi}}:=
\frac{1}{2} \left( \frac{\partial}{\partial u_i}+\compi \frac{\partial}{\partial v_i} \right)$.} We also define $\partial_i f(l):=\frac{\partial}{\partial l_i}f(l)$ as differentiation in $l_i\in\mathbb{R}_+$. We use $\E_{i,l}$ to denote the expectation with respect to a Markov chain starting at state $i$ with initial local time $l$. Similarly, let $\E_{i,l}^*$ denote expectation for the reversed chain starting at state $i$ with initial local time $l$. We say that a function $f(i,l):\mathcal{X}\times \mathbb{R}_+^n\rightarrow \mathbb{C}$ has property $\mathfrak{F}$ if it has property $\mathfrak{F}$ in $l$ for each fixed first component $i\in\mathcal{X}$. We then have the following Ward identities.

\vspace{.3em} \begin{lem}[Ward identities for twisted Gaussian measure]
\label{lem:ward}
Consider an irreducible continuous-time Markov chain on discrete space $[n]$ with Laplacian $Q$. For $f(i,l):\mathcal{X}\times \mathbb{R}_+^n\mapsto \mathbb{C}$ with property $\mathfrak{F}$ and $g(\phi):\mathbb{C}^n\rightarrow\mathbb{C}$ with property $\mathfrak{G}$, we have
\[\sum_i[ \phi_i g(\phi) f(i,\phi\bar{\phi}) ]_\mu=\sum_i\bigg[ (T_ig)(\phi)\int_0^\infty \E_{i,\phi\bar{\phi}}f(X_t, \Ell_t)dt\bigg]_\mu,\]
and
\[\sum_i[ \bar{\phi}_i g(\phi) f(i,\phi\bar{\phi})]_\mu=\sum_i\bigg[ (P_ig)(\phi)\int_0^\infty \E_{i,\phi\bar{\phi}}^*f(X_t, \Ell_t)dt\bigg]_\mu.\]
\end{lem}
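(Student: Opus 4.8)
The plan is to prove the two Ward identities by integration by parts in the complex variables $\phi_i, \bar\phi_i$ against the twisted Gaussian measure $d\mu(\phi) = (2\pi\compi)^{-n} e^{\langle \phi, Q\bar\phi\rangle}\, d\bar\phi\, d\phi$, reducing everything to the ``master identity'' that $[\,T_i \cdot\,]_\mu$ and $[\,P_i \cdot\,]_\mu$ are related to multiplication by (linear combinations of) the $\phi_j$'s via the quadratic form $\langle\phi, Q\bar\phi\rangle$; the local-time integral on the right-hand side is then recognized as the resolvent $(-Q)^{-1}$ (or its transpose) acting on $f$, which is exactly what integration by parts produces. Since the two statements are mirror images of each other under $\phi \leftrightarrow \bar\phi$ and $Q \leftrightarrow Q^T$ (which swaps the chain for its time-reversal, hence $\E_{i,l}$ for $\E_{i,l}^*$), I would prove the first identity in detail and obtain the second by this symmetry.

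First I would establish the basic integration-by-parts rule: for a function $F(\phi)$ of property-$\mathfrak{G}$ type (so that all boundary terms at infinity vanish against the Gaussian decay coming from $S = \frac12(Q+Q^T) \preceq 0$ — this is where the integrability remarks preceding the lemma are used), one has $[\,(T_i F)(\phi)\,]_\mu = 0$, because $T_i$ is $\partial/\partial\bar\phi_i$ and $\frac{\partial}{\partial\bar\phi_i} e^{\langle\phi,Q\bar\phi\rangle} = (Q^T\phi)_i e^{\langle\phi,Q\bar\phi\rangle}$, so $[\,(T_i F)(\phi)\,]_\mu = -[\,F(\phi)(Q^T\phi)_i\,]_\mu = -\sum_j Q_{ji}[\,\phi_j F(\phi)\,]_\mu$. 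Summing a suitable version of this over $i$, applied to $F(\phi) = g(\phi)\, u_i(\phi\bar\phi)$ for an auxiliary smooth function $u$, and using the product rule $T_i\big(g(\phi) u(\phi\bar\phi)\big) = (T_i g)(\phi) u(\phi\bar\phi) + g(\phi)\,\phi_i (\partial_i u)(\phi\bar\phi)$ (since $T_i(\phi_j\bar\phi_j) = \phi_i \ind\{i=j\}$), I get a recursive relation. Iterating this relation — equivalently, inverting the operator $-Q$ — converts $\sum_i [\phi_i g(\phi) f(i,\phi\bar\phi)]_\mu$ into $\sum_i [(T_i g)(\phi)\, \tilde f(i,\phi\bar\phi)]_\mu$ where $\tilde f(i,l) = \big((-Q)^{-1}\text{-type operator applied to } f\big)(i,l)$.

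The remaining step is to identify that operator with $\int_0^\infty \E_{i,l} f(X_t, \Ell_t)\, dt$. This is the probabilistic heart of the argument: the generator of the space-time process $(X_t, \Ell_t)$ (state moves according to $Q$, local time $\Ell^j$ accumulates at rate $\ind\{X=j\}$) acts on functions $f(i,l)$ as $(\mathcal A f)(i,l) = \sum_j Q_{ij} f(j,l) + (\partial_i f)(i,l)$, and $w(i,l) := \int_0^\infty \E_{i,l} f(X_t,\Ell_t)\,dt$ formally solves $\mathcal A w = -f$. On the $\mu$-integral side, the recursive/inverted relation from the previous paragraph says precisely that $\sum_i [\phi_i g(\phi) f(i,\phi\bar\phi)]_\mu = \sum_i [(T_i g)(\phi)\, w(i,\phi\bar\phi)]_\mu$ provided $w$ satisfies the same fixed-point equation $\sum_j Q_{ij} w(j,l) + \partial_i w(i,l) = -f(i,l)$, which it does. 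So the two sides agree once I check that $w$ has property $\mathfrak{F}$ (finiteness, smoothness, sub-polynomial tails in $l$ for all derivatives), which follows from the subexponential tail bounds on the chain's local time and the assumed tail control on $f$.

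**The main obstacle** I anticipate is the analytic bookkeeping rather than any conceptual difficulty: justifying that the operator ``invert $-Q$ in the recursive $\mu$-identity'' genuinely coincides with ``integrate the expectation of $f$ along the space-time chain'' requires (i) verifying $w(i,l) = \int_0^\infty \E_{i,l} f(X_t,\Ell_t)\,dt$ is finite and regular enough to be a legitimate test function — using the exponential tail bounds on $k$-permanental / local-time quantities noted in the earlier remark — and (ii) controlling all the integration-by-parts boundary terms uniformly, which is handled by the property-$\mathfrak{G}$ polynomial growth of $g$ (and its derivatives) against the Gaussian factor $e^{\langle\phi,S\bar\phi\rangle}$ with $S \prec 0$ on the relevant subspace. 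Once these are in place, the second identity is immediate: replacing $Q$ by $Q^T$ throughout swaps $T_i \leftrightarrow P_i$, replaces the chain with rate matrix $Q$ by the reversed chain with rate matrix $Q^T = \Pi^{-1}(\text{something})$, hence $\E_{i,l} \leadsto \E_{i,l}^*$, and the computation is verbatim.
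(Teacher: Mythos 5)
Your proposal is correct and follows essentially the same route as the paper: complex integration by parts against $e^{\langle\phi,Q\bar\phi\rangle}$ plus the product rule yields $\sum_i[(T_ig)\,w(i,\phi\bar\phi)]_\mu=-\sum_i[\phi_i g\,(\mathcal{A}w)(i,\phi\bar\phi)]_\mu$ with $\mathcal{A}$ the generator of $(X_t,\Ell_t)$, and the paper's step of applying this to $f_t=\E_{\cdot,\cdot}f(X_t,\Ell_t)$ and integrating Kolmogorov's backward equation over $t$ is exactly your observation that $w=\int_0^\infty f_t\,dt$ solves $\mathcal{A}w=-f$. The technical points you flag (vanishing of the $t\to\infty$ boundary term, absolute integrability, and the $P_i$/time-reversal symmetry for the second identity) are precisely the ones the paper addresses, with the integrability details deferred to its Appendix A.
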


\begin{proof}
First note that\footnote{Note that the integrals on the right-hand side exist because of absolute integrability, since property $\mathfrak{F}$ of the function $f$ and property $\mathfrak{G}$ of the function $g$ guarantees that given any $d\in\mathbb{N}$, terms like $|(T_ig)(\phi)f(i,\phi\bar{\phi})|$ will be bounded by $r_d(1+\|\phi\|)^{-d}$ for some constant $r_d>0$.}
\begin{equation*}
\begin{aligned}
0
&=\int_\orth (2 \pi \compi)^{-n} T_i(g(\phi)f(i,\phi\bar{\phi})e^{\langle \phi,Q\bar{\phi}\rangle})d \bar{\phi} d \phi\\
&= [ (T_ig)(\phi)f(i,\phi\bar{\phi})]_\mu
+[ \phi_i g(\phi)\partial_if(i,\phi\bar{\phi})]_\mu
+[ g(\phi)(Q^T\phi)_if(i,\phi\bar{\phi})]_\mu.
\end{aligned}
\end{equation*}

Summing over all $i\in[n]$. we have
\begin{equation*}
\begin{aligned}
\sum_i[ (T_ig)(\phi)f(i,\phi\bar{\phi})]_\mu
&=
-\sum_i[ \phi_i g(\phi)\partial_if(i,\phi\bar{\phi})]_\mu
-\sum_i[ g(\phi)(Q^T\phi)_if(i,\phi\bar{\phi})]_\mu\\
&= -\sum_i[ \phi_i g(\phi)\partial_if(i,\phi\bar{\phi})]_\mu
-\sum_i[ \phi_ig(\phi)(Qf)(i,\phi\bar{\phi})]_\mu.
\end{aligned}
\end{equation*}

Let $\mathcal{Q}$ be the generator of the joint process $(X_t, \Ell_t)$, which is a Markov process itself. It's not hard to see that we have $(\mathcal{Q} f)(i,l)=(\partial_if+Qf)(i,l)$, and hence
\begin{equation*}
\begin{aligned}
\sum_i[(T_ig)(\phi)f(i,\phi\bar{\phi})]_\mu
&= -\sum_i[\phi_i g(\phi)\,(\partial_if
+Qf)(i,\phi\bar{\phi}))]_\mu\\
&= -\sum_i[\phi_i g(\phi)\,(\mathcal{Q} f)(i,\phi\bar{\phi})]_\mu.
\end{aligned}
\end{equation*}

Now we can use Komogorov's backward equation for the joint process $(X_t,\Ell_t)$, and deduce that $\mathcal{Q} f_t=\partial_t f_t$ for $f_t(i,l):=\E_{i,l}(f(X_t, \Ell_t))$. Hence letting $f=f_t$, we have
\begin{equation*}
\begin{aligned}
\int_0^\infty\sum_i [(T_ig)(\phi) & f_t(i,\phi\bar{\phi})]_\mu dt
= - \int_0^\infty \sum_i[\phi_i g(\phi)\,(\mathcal{Q} f_t)(i,\phi\bar{\phi})]_\mu dt\\
&= - \int_0^\infty \sum_i[\phi_i g(\phi)\,(\partial_t f_t)(i,\phi\bar{\phi})]_\mu dt= - \sum_i[\phi_i g(\phi)\,f_t(i,\phi\bar{\phi})]_\mu \Big|_0^\infty.
\end{aligned}
\end{equation*}

Now we note that $\phi_ig(\phi)\E_{i,\phi\bar{\phi}}(f(X_t,\Ell_t))\rightarrow 0$ almost surely. Since the event $\{\forall i\in\mathcal{X},\,\Ell_t^i \rightarrow \infty\}$ happens almost surely, we know the r.h.s. is further equal to $\sum_i[\phi_i g(\phi)\,f(i,\phi\bar{\phi})]_\mu$. On the other hand, we know the l.h.s. is indeed $\sum_i\big[(T_ig)(\phi)\int_0^\infty \E_{i,\phi\bar{\phi}}f(X_t, \Ell_t)dt\big]_\mu$ by absolute integrability. The detailed proof of absolute integrability is deferred to the appendix (\cref{apx:sec1}).

If one starts with the following equation:
\begin{equation*}
\begin{aligned}
0
&=\int_\orth (2 \pi \compi)^{-n} P_i(g(\phi)f(i,\phi\bar{\phi})e^{\langle \phi,Q\bar{\phi}\rangle})d \bar{\phi} d \phi \\
&= [(P_ig)(\phi)f(i,\phi\bar{\phi})]_\mu
+[\bar{\phi}_i g(\phi)\partial_if(i,\phi\bar{\phi})]_\mu
+[g(\phi)(Q\bar{\phi})_if(i,\phi\bar{\phi})]_\mu,
\end{aligned}
\end{equation*}

then one would arrive at the second equation. Note that the reversed chain will be irreducible if the original chain is.
\end{proof}

\subsection{Non-reversible Dynkin's isomorphism theorem}

We have the following theorem, previously proved by Eisenbaum\cite{eisenbaum13} and Le Jan\cite{lejan08}, extending Dynkin's isomorphism theorem from the reversible case\cite{sznitman12}. Here $\E_a$ is the expectation operator associated with the Markov chain started at state $a\in\mathcal{X}$.

\vspace{.3em} \begin{thm}[non-reversible Dynkin's isomorphism theorem]
\label{thm:dynkin}
Consider an irreducible continuous-time Markov chain on discrete space $[n]$ with Laplacian $Q$ and killing rate $\bm{h}=(h_1, \cdots, h_n)\geq 0$ such that $\sum_i h_i>0$. Let $\ell$ be a 1-permanental process with kernel $G_{\bm{h}}=(\diag(h)-Q)^{-1}$. For $ u:\orthp\rightarrow \mathbb{C}$ with property $\mathfrak{F}$, we have for any $a\in[n]$,
\[\E\,[\ell_a u(\ell)]=\int_0^\infty \E_a u(\Ell_t+\ell)\ind_{X_t=a}e^{-\sum_i h_i\Ell_t^i}dt=\int_0^\infty \E_a^* u(\Ell_t+\ell)\ind_{X_t=a}e^{-\sum_i h_i\Ell_t^i}dt. \]

\end{thm}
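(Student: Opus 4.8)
The plan is to combine the non-reversible Le Jan isomorphism theorem, \cref{thm:lejan}, with the Ward identities of \cref{lem:ward}, the latter applied to the chain killed at rates $\bm h$. By \cref{rem:rem2-lem:density} the Green's kernel $G_{\bm h}$ has a positive definite symmetric part, so \cref{thm:lejan} applies to $\ell$: writing $\tilde\mu_{\bm h}$ for the normalized twisted Gaussian measure $\frac{1}{(2\pi\compi)^n|G_{\bm h}|}e^{\langle\phi,Q_{\bm h}\bar\phi\rangle}d\bar\phi\,d\phi$, with $Q_{\bm h}=Q-\diag(\bm h)=-G_{\bm h}^{-1}$, we have $[\,h(\phi\bar\phi)\,]_{\tilde\mu_{\bm h}}=\E[h(\ell)]$ for every ($\mathbb C$-valued, by linearity) $h$ of subexponential growth. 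The computation proving \cref{lem:ward} carries over verbatim with $Q$ replaced by $Q_{\bm h}$ and the recurrent chain replaced by the corresponding killed chain (and, being linear in the measure, the identities hold equally for the normalized $\tilde\mu_{\bm h}$); the only change is that the boundary term $\phi_i g(\phi)\,\E^{\bm h}_{i,\phi\bar\phi}f(X_t,\Ell_t)\to0$ now because the killed chain reaches the cemetery almost surely (here $\sum_i h_i>0$ and irreducibility are used), rather than because all local times diverge.

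I then apply the first of these killed-chain Ward identities with $g(\phi)=\bar\phi_a$, which has property $\mathfrak G$, and $f(i,l)=\ind_{i=a}u(l)$. Since $T_ig=\partial_{\bar\phi_i}\bar\phi_a=\ind_{i=a}$, the left side collapses to $[\,\phi_a\bar\phi_a\,u(\phi\bar\phi)\,]_{\tilde\mu_{\bm h}}$, which by \cref{thm:lejan} with $h(l)=l_au(l)$ equals $\E[\ell_au(\ell)]$, while the right side collapses to $\big[\int_0^\infty\E^{\bm h}_{a,\phi\bar\phi}\big(\ind_{X_t=a}u(\Ell_t)\big)\,dt\big]_{\tilde\mu_{\bm h}}$. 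To interpret the inner expectation, note that starting the killed chain at $a$ with initial local-time vector $\phi\bar\phi$ makes $\Ell_t=\phi\bar\phi+(\text{fresh local time up to }t)$, and that, conditioning on the trajectory of the \emph{unkilled} chain, the probability of surviving to time $t$ is $e^{-\sum_i h_i\Ell_t^i}$. Hence $\E^{\bm h}_{a,\phi\bar\phi}\big(\ind_{X_t=a}u(\Ell_t)\big)=\E_a\big(\ind_{X_t=a}\,e^{-\sum_i h_i\Ell_t^i}\,u(\phi\bar\phi+\Ell_t)\big)$, the expectation now over the unkilled chain. Applying Fubini and then \cref{thm:lejan} once more in the form $[\,u(\phi\bar\phi+c)\,]_{\tilde\mu_{\bm h}}=\E[u(\ell+c)]$ for a deterministic vector $c$, the right side becomes $\int_0^\infty\E_a\big(u(\ell+\Ell_t)\,\ind_{X_t=a}\,e^{-\sum_i h_i\Ell_t^i}\big)\,dt$ with $\ell$ independent of the chain, which is the first claimed equality. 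The second equality is obtained identically from the \emph{second} Ward identity with $g(\phi)=\phi_a$ and $P_ig=\ind_{i=a}$, together with the observation that $Q_{\bm h}^T$ is the Laplacian of the time-reversed chain killed at the same rates $\bm h$, so that the dual process appearing there is precisely $X^*$ killed by $\bm h$.

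Two kinds of routine bookkeeping remain. First, \cref{lem:ward} is stated for integrands of property $\mathfrak F$, whereas $u$ is only of subexponential growth, so one runs the argument with $u$ replaced by $u\,\chi_R$ for smooth compactly supported cutoffs $\chi_R\uparrow1$ — for which every integral is manifestly finite — and then passes to the limit $R\to\infty$ by dominated convergence, using the exponential tail bound $\P(\ell\ge t)\le c\,e^{-\lambda_{\min}(-S)\|t\|_1}$ established after \cref{lem:density} on the left and the damping supplied by $e^{-\sum_i h_i\Ell_t^i}$ (combined with subexponential growth of $u$) on the right. Second, the Fubini step and the interchange of $\int_0^\infty$ with $[\cdot]_{\tilde\mu_{\bm h}}$ need the same tail and damping estimates together with $\int_0^\infty\P^{\bm h}_a(X_t=a)\,dt=(G_{\bm h})_{aa}<\infty$. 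I expect the main obstacle to be exactly this analytic bookkeeping — setting up the killed-chain version of \cref{lem:ward} and controlling all the limit interchanges — since the algebraic heart of the proof, namely the choice $g=\bar\phi_a$, $f(i,l)=\ind_{i=a}u(l)$ and the integrating-out of the killing that produces the weight $e^{-\sum_i h_i\Ell_t^i}$, is short and transparent.
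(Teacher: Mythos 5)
Your proposal is correct and follows the same skeleton as the paper's proof (Ward identity from \cref{lem:ward} with $g=\bar\phi_a$, $T_ig=\ind_{i=a}$, followed by \cref{thm:lejan} to pass to the permanental vector, and the second Ward identity for the time-reversed statement), but you handle the killing differently. The paper never leaves the unkilled, recurrent chain: it applies \cref{lem:ward} as stated, with the measure $\mu$ built from $Q$, and introduces the killing purely through the choice of test function $v(l)=u(l)e^{-\sum_i h_i l_i}$; the factor $e^{-\sum_i h_i|\phi_i|^2}$ that this produces is then absorbed into the measure, turning $\mu$ into $\mu_{\bm h}$, and the factor $e^{-\sum_i h_i\Ell_t^i}$ appears on the right automatically. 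You instead re-derive the Ward identity for the sub-Markovian chain with generator $Q_{\bm h}$ (with the boundary term now vanishing because of almost-sure killing rather than divergence of local times) and then undo the killing via the Feynman--Kac identity $\E^{\bm h}_{a,\phi\bar\phi}[\ind_{X_t=a}u(\Ell_t)]=\E_a[\ind_{X_t=a}e^{-\sum_i h_i\Ell_t^i}u(\phi\bar\phi+\Ell_t)]$. The two routes are equivalent, and your Feynman--Kac step is sound; what the paper's version buys is that \cref{lem:ward} is used exactly as proved, so no killed-chain analogue (with its own boundary-term and integrability analysis) needs to be established. One small correction: the theorem hypothesizes that $u$ has property $\mathfrak{F}$, not merely subexponential growth, so your cutoff argument with $u\chi_R$ is unnecessary --- $f(i,l)=\ind_{i=a}u(l)$ already satisfies the hypotheses of \cref{lem:ward} directly.
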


We remark that in the equation above, the operator $\E_a$ also takes the expectation with respect to the 1-permanental process $\ell$. Note that $G_{\bm{h}}:=(\diag(\bm{h})-Q)^{-1}$ is positive definite for any $\bm{h}\geq 0$, s.t. $\sum_i h_i>0$. Hence the 1-permanental vector indeed exists \cite{ek09}. According to \cref{thm:lejan}, for any real function on $\mathbb{R}_+^n$ with a subexponential growth rate, we have
\[\E[ u(\ell)] = [u(\phi\bar{\phi})]_{\mu_{\bm{h}}}.\]

Here $[\cdot]_{\mu_{\bm{h}}}$ denotes integration with respect to the measure $d\mu_{\bm{h}}(\phi):=\frac{1}{(2\pi\compi)^n|G_h|}e^{\langle \phi, Q_{\bm{h}}\bar{\phi}\rangle}d\bar{\phi}d\phi$, where $Q_{\bm{h}}:=Q-\diag(\bm{h})$. Now we prove \cref{thm:dynkin} above.
\begin{proof}

Take $g(\phi)=\bar{\phi}_b$, and $f(i,l):= v(l)\ind_{i=a}$ for some function $v:\mathbb{R}_+^n\rightarrow \mathbb{C}$ that will be determined later in the first equation of Lemma \ref{lem:ward}, we obtain
\[\sum_i[ \phi_i \bar{\phi}_b v(\phi\bar{\phi})\ind_{i=a} ]_\mu=\sum_i\bigg[ (T_ig)(\phi)\int_0^\infty \E_{i,\phi\bar{\phi}}v(\Ell_t)\ind_{X_t=a}dt\bigg]_\mu.\]

Note that $T_ig(\phi)=1$ if $i=b$, and $T_ig(\phi)=0$ otherwise. Also note $\E_{b,\phi\bar{\phi}}v(\Ell_t)\ind_{X_t=a}=\E_b v(\Ell_t+\phi\bar{\phi})\ind_{X_t=a}$. Simplifying the equation above, we have
\[[\phi_a\bar{\phi}_b v(\phi\bar{\phi})]_\mu=\bigg[ \int_0^\infty \E_b v(\Ell_t+\phi\bar{\phi})\ind_{X_t=a}dt\bigg]_\mu. \]

Now let $ v(l):=u(l)e^{-\sum_ih_il_i}$ in the equation above and get
\[[\phi_a\bar{\phi}_b u(\phi\bar{\phi})e^{-\sum_i h_i|\phi_i|^2}]_\mu=\bigg[\bigg(\int_0^\infty \E_b u(\Ell_t+\phi\bar{\phi})\ind_{X_t=a}e^{-\sum_i h_i\Ell_t^i}dt\bigg)e^{-\sum_i h_i|\phi_i|^2}\bigg]_\mu. \]

Equivalently, after dividing both sides by $|G_{\bm{h}}|$, we have
\[[\phi_a\bar{\phi}_b u(\phi\bar{\phi})]_{\mu_{\bm{h}}}=\bigg[\int_0^\infty \E_b u(\Ell_t+\phi\bar{\phi})\ind_{X_t=a}e^{-\sum_i h_i\Ell_t^i}dt\bigg]_{\mu_{\bm{h}}}.\]

We remark that the measure $\mu_{\bm{h}}$ is normalizable in this case since $Q_{\bm{h}}$ has a positive definite symmetric part.
%positive definite Hermitian part.
When $a=b$, we can use \cref{thm:lejan} on both sides of the integration, and obtain the first equality in the theorem due to the following equations.
\begin{align*}
& [\phi_a\bar{\phi}_a u(\phi\bar{\phi})]_{\mu_{\bm{h}}}=\E\,[\ell_a u(\ell)],\\
& \bigg[\int_0^\infty \E_b u(\Ell_t+\phi\bar{\phi})\ind_{X_t=a}e^{-\sum_i h_i\Ell_t^i}dt\bigg]_{\mu_{\bm{h}}}=\int_0^\infty \E_a u(\Ell_t+\ell)\ind_{X_t=a}e^{-\sum_i h_i\Ell_t^i}dt.
\end{align*}

For the second equality, we use the second equation in Lemma \ref{lem:ward}. The rest of the computation is essentially the same as above.
\end{proof}

%In the general case,
%when $a,b\in[n]$ are not necessarily equal,
%it
The general case, when $a,b\in[n]$ are not necessarily equal, corresponds to the isomorphism theorem as stated in Proposition 1 in \cite{lejan08}. However, when $a\neq b$, and $Q$ is non-reversible, the l.h.s does not have a direct probabilistic meaning.

\subsection{non-reversible Ray-Knight's isomorphism}

The following theorem, first proved by Eisenbaum-Kaspi\cite{ek09}, extends the generalized second Ray-Knight's isomorphism theorem to the non-reversible case (the reversible version of generalized Ray-Knight isomorphism theorem can be found in, e.g., \cite{sznitman12}).

\vspace{.3em} \begin{thm}[non-reversible generalized second Ray-Knight's isomorphism theorem]
\label{thm:rayknight}
Given a Laplacian $Q$ for an irreducible finite Markov chain, let $\ell$ be a 1-permanental process with kernel $G^h=(he_ae_a^T-Q)^{-1}$ for some $h>0$ and some $a\in[n]$. For $ u:\orthp\rightarrow \mathbb{C}$ a smooth compactly supported function and any $r\geq 0$,
\[\E_a\big[ u(\ell+\Ell_{\inv(r)})\,|\,\ell_a=0\big]=\E\big[ u(\ell)\,|\,\ell_a=r\big].\]
Here $\inv(r):=\inf\{s\geq 0: \Ell_s^a\geq r\}$.
\end{thm}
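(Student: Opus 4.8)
The plan is to follow the same ``Ward-identity plus density-formula'' template that was used in the proof of \cref{thm:dynkin}, but now starting from the second Ward identity of \cref{lem:ward} applied to a well-chosen pair $(g,f)$, and then integrating against the inverse-local-time time change. First I would fix the kernel $G^h = (h e_a e_a^T - Q)^{-1}$; note that by \cref{rem:rem2-lem:density} (with killing rate $\bm h = h e_a$) this $G^h$ has a positive definite symmetric part, so both \cref{lem:density} and \cref{lem:conditioned} apply to it, and the associated twisted Gaussian measure $\mu_h$ with density $(2\pi\compi)^{-n}|G^h|^{-1} e^{\langle\phi,(Q-h e_a e_a^T)\bar\phi\rangle}$ is normalizable. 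The right-hand side $\E[u(\ell)\mid \ell_a = r]$ I would rewrite, via \cref{lem:conditioned} (applied with the coordinate $a$ playing the role of coordinate $1$), as an explicit $(0,2\pi)^{n-1}$-integral of a shifted twisted Gaussian; equivalently, $\E[u(\ell)\mid\ell_a=r] = G^h_{aa} e^{(G^h_{aa})^{-1} r}\,\E[u(\ell)\ind_{\ell_a\in dr}]/dr$, using that $\ell_a$ is $\Exp(G^h_{aa})$.

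Next I would attack the left-hand side. Here $\ell$ is the $1$-permanental vector conditioned on $\ell_a = 0$; by \cref{rem:riszero} this conditioned vector is itself $1$-permanental, with kernel $(-Q_{**})^{-1}$ where $*$ now denotes restriction to $[n]\setminus\{a\}$ — and since $he_ae_a^T$ only perturbs the $a$-th diagonal entry, $Q_{**}$ here is just the Laplacian $Q$ restricted away from $a$, i.e.\ $\ell$ given $\ell_a = 0$ is the occupation field of the Markov chain killed on hitting $a$. Independently of this, $\Ell_{\inv(r)}$ is the local time of the chain started at $a$ and run until its local time at $a$ first exceeds $r$; by the strong Markov property at the successive returns to $a$ and excursion theory, $\Ell_{\inv(r)}$ decomposes as a Poisson number (mean $r/G^h_{aa}$, or rather governed by the $\Exp$ holding-time structure at $a$) of i.i.d.\ excursion occupation fields away from $a$, each of which — again by \cref{rem:riszero} / Le Jan's isomorphism for the killed chain — is related to the squared-field picture. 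The cleanest route is: apply \cref{thm:lejan} (non-reversible Le Jan) to the chain killed at $a$, expressing $\E_a[u(\ell+\Ell_{\inv(r)})\mid\ell_a=0]$ as a twisted-Gaussian integral over $\phi_* \in \mathbb C^{n-1}$ against $e^{\langle\phi_*, Q_{**}\bar\phi_*\rangle}$, with the source term $\sqrt r Q_{**}^{-1}Q_{*a}$ coming precisely from the excursions started by a jump out of $a$ — this is the content of the Ward identity with $g(\phi) = \phi_a$ (or $\bar\phi_a$) linearizing the extra coordinate.

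Concretely, the key computation is: starting from the second Ward identity with $g(\phi)=\bar\phi_a$ and $f(i,l) = v(l)\ind_{i=a}$, integrate in $t$ to get $[\bar\phi_a \phi_a v(\phi\bar\phi)]_{\mu} = [\int_0^\infty \E_{a}^* v(\Ell_t + \phi\bar\phi)\ind_{X_t=a}\,dt]_\mu$; then replace the fixed time horizon by $\inv(r)$ using that $\int_0^{\inv(r)}\ind_{X_t=a}\,dt = r$ deterministically, which converts the $\ind_{X_t=a}\,dt$ integration into an evaluation at inverse local time $r$ and produces the factor $\sqrt r$ in the exponent after the polar change of variables $\phi_a = \sqrt r e^{\compi\theta_a}$ on the appropriate coordinate. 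Matching this against the \cref{lem:conditioned} expression for $\E[u(\ell)\mid\ell_a = r]$ — both sides are the same $(0,2\pi)^{n-1}$-integral of $\exp(\langle \phi_* + \sqrt r Q_{**}^{-T}Q_{a*},\, Q_{**}(\bar\phi_* + \sqrt r Q_{**}^{-1}Q_{*a})\rangle)$ against $|-Q_{**}|(2\pi)^{-(n-1)}$ — gives the identity, and the $\E_a$ versus $\E_a^*$ duality comes from using the first versus the second Ward identity, exactly as in \cref{thm:dynkin}. I expect the main obstacle to be the excursion-theory bookkeeping that identifies $\Ell_{\inv(r)}$ (conditioned appropriately) with the source-shifted occupation field of the killed chain — i.e.\ making rigorous that ``running until local time $r$ at $a$'' produces precisely the deterministic source $\sqrt r Q_{**}^{-1}Q_{*a}$ and not merely something with the right first moment; the approximation/compact-support and absolute-integrability steps needed to move $\inv(r)$ inside $[\cdot]_\mu$ (analogous to the deferred estimates in \cref{apx:sec1}) are routine by comparison but must be checked since $\mu$ (without the $h$-killing on all nodes) is only defined via its normalizable $\mu_h$ restriction here.
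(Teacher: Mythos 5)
Your proposal correctly identifies the overall template (a Ward identity from \cref{lem:ward} combined with the conditioned density formula of \cref{lem:conditioned}, with the right-hand side recognized as the shifted twisted Gaussian with source $\sqrt{r}\,Q_{**}^{-1}Q_{*a}$), but it is missing the one idea that actually makes the Ray--Knight statement come out, and the step you defer as ``excursion-theory bookkeeping'' is precisely that missing idea. Your ``key computation'' takes $g(\phi)=\bar{\phi}_a$ and $f(i,l)=v(l)\ind_{i=a}$, which only reproduces the $a=b$ case of \cref{thm:dynkin}: an identity for $[\,\phi_a\bar{\phi}_a v(\phi\bar{\phi})\,]_\mu$ in terms of an integral over \emph{all} times $t$. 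The deterministic fact $\int_0^{\inv(r)}\ind_{X_t=a}\,dt=r$ does not, by itself, convert that into a statement comparing the field conditioned on $\ell_a=0$ with the field conditioned on $\ell_a=r$; you still need a mechanism that localizes the $l_a$-variable at the two values $0$ and $r$ simultaneously. (Also, with $g=\bar{\phi}_a$ the \emph{second} Ward identity gives $P_ig=0$ and hence a trivial identity; you would need the first one.)

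The paper's mechanism is a different choice of test function: $g_\epsilon(\phi)=\int_{2\epsilon}^{r-2\epsilon}\phi_a^{-1}\,\delta_{0,\epsilon}(\phi_a\bar{\phi}_a-s)\,ds$ together with $f(i,l)=u(l)\,\delta_{0,\epsilon}(l_a-r)\ind_{i=a}$. The point is twofold: (i) the supports of $g_\epsilon$ and of $\delta_{0,\epsilon}(\phi_a\bar\phi_a-r)$ are disjoint, so the left side of the Ward identity vanishes identically; and (ii) the derivative $T_a g_\epsilon$ telescopes to $\delta_{0,\epsilon}(l_a-2\epsilon)-\delta_{0,\epsilon}(l_a-r+2\epsilon)$, which is exactly the pair of localizations at $l_a\approx 0$ and $l_a\approx r$ that you need. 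After the change of variables $\ind_{X_t=a}\,dt=dL^a$ and the limit $\epsilon\downarrow 0$, this yields $[\delta_0(l_a)\,\E_a u(l+\Ell_{\inv(r)})]_\mu=[\delta_r(l_a)\,u(l)]_\mu$, and only then does \cref{lem:conditioned} (with $r=0$ on the left and the given $r$ on the right, via the rotation-symmetry computation) identify the two sides as the two conditional expectations. No excursion decomposition of $\Ell_{\inv(r)}$ into Poissonian excursion fields is ever needed or proved; the source term $\sqrt{r}\,Q_{**}^{-1}Q_{*a}$ appears only on the right-hand side, as an algebraic consequence of completing the square in \cref{lem:conditioned}, not as a description of the law of $\Ell_{\inv(r)}$. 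As written, your argument therefore has a genuine gap at its central step, even though the surrounding scaffolding is correct.
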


%To see that indeed the symmetric part of $G^h$ is positive definite by discussion in \cref{rem:rem2-lem:density}.
To see that the symmetric part of $G^h$ is positive definite, refer to the discussion in \cref{rem:rem2-lem:density}.

\begin{proof} Directly using the choice of $g$ similar to that in Bauerschmidt's proof\cite{bhs21} in the Ward identities will incur some problems, because we are dealing with complex numbers. For any $\epsilon>0$, let $\delta_{0,\epsilon}(u)$ be a smooth approximation of the delta function with support on $\{u\in \mathbb{R}:\,|u|< \epsilon\}$.

The case $r=0$ is trivial since under $\P_a$, we have $\Ell_{\inv(0)}=0$ and the theorem holds. Hence, we consider from now on $r>0$, and any $\epsilon<\frac{r}{4}$. In the first equation of Lemma \ref{lem:ward}, we take $g_\epsilon(\phi)=\int_{2\epsilon}^{r-2\epsilon}\frac{1}{\phi_a}\delta_{0,\epsilon}(\phi_a\bar{\phi}_a-s)ds$. Note this equation is meaningless when $\phi_a=0$, so we simply define $g_\epsilon(\phi)=0$ when $\phi_a=0$. We also take $f(i,l)=  u(l)\delta_{0,\epsilon}(l_a-r)\ind_{i=a}$. In this case, we have the integration formula\footnote{We remark that integral for both sides exists since the integrand inside each bracket is continuous with bounded support.}
\[[\phi_a g_\epsilon(\phi)  u(\phi\bar{\phi})\delta_{0,\epsilon}(\phi_a\bar{\phi}_a-r)]_\mu=\bigg[(T_ag_\epsilon)(\phi)\int_0^\infty \E_a u(\phi\bar{\phi}+\Ell_t)\delta_{0,\epsilon}( \phi_a\bar{\phi}_a+\Ell_t^a-r)\ind_{X_t=a}dt\bigg]_\mu.\]

Note that $g_\epsilon(\phi)$ is nonzero only if $\phi_a\bar{\phi}_a< r-\epsilon$, which is disjoint from the support of $\delta_{0,\epsilon}(\phi_a\bar{\phi}_a-r)$ since it's non-zero only if $\phi_a\bar{\phi}_a> r-\epsilon$. Hence, the left-hand side is always zero.

Now consider the right-hand side. We have
\[T_ag_\epsilon(\phi)=\int_{2\epsilon}^{r-2\epsilon}\frac{1}{\phi_a}T_a\delta_{0,\epsilon}(\phi_a\bar{\phi}_a-s)ds=\int_{2\epsilon}^{r-2\epsilon}\delta_{0,\epsilon}'(\phi_a\bar{\phi}_a-s)ds.\]

Here the validity of the interchange of integration and differentiation is due to the continuity of $g_\epsilon(\phi)$ and $T_a g_\epsilon(\phi)$, which satisfies the Leibniz rule. So in $(l,\theta)$-coordinates, we have
\[T_ag_\epsilon(\phi)=\int_{2\epsilon}^{r-2\epsilon}\delta_{0,\epsilon}'(l_a-s)ds=\delta_{0,\epsilon}(l_a-2\epsilon)-\delta_{0,\epsilon}(l_a-r+2\epsilon).\]

Therefore, we conclude for any sufficiently small $\epsilon>0$,
\[\begin{aligned}
\bigg[\delta_{0,\epsilon}(l_a-2\epsilon)& \int_0^\infty \E_a u(l+\Ell_t)\delta_{0,\epsilon}( l_a+\Ell_t^a-r)\ind_{X_t=a}dt\bigg]_\mu\\
&=\bigg[\delta_{0,\epsilon}(l_a-r+2\epsilon)\int_0^\infty \E_a u(l+\Ell_t)\delta_{0,\epsilon}( l_a+\Ell_t^a-r)\ind_{X_t=a}dt\bigg]_\mu.\end{aligned}\]

Fix any $l\in\orthp$ s.t. $l_a\leq r$. Let $dL^a = \ind_{X_t=a}dt$, we have
\begin{equation*}
\begin{aligned}
\int_0^\infty\E_a & u(l+\Ell_t) \delta_{0,\epsilon}( l_a+\Ell_t^a-r)\ind_{X_t=a}dt\\
&=\E_a \int_0^\infty u(l+\Ell_t) \delta_{0,\epsilon}( l_a+\Ell_t^a-r)\ind_{X_t=a}dt\\
&=\E_a \int_0^\infty u(l+\Ell_{\inv(L^a)})\delta_{0,\epsilon}( l_a+L^a-r)dL^a\\
&=\int_0^\infty\E_a u(l+\Ell_{\inv(\gamma)})\delta_{0,\epsilon}( l_a+\gamma-r)d\gamma
\end{aligned}
\end{equation*}

Now taking the limit $\epsilon\downarrow 0$, and using the continuity of $\gamma\mapsto  \E_a u(l+\Ell_{\inv(\gamma)})$\footnote{The continuity follows from the same analysis as \cite[footnote 1, pg. 9]{bhs21}, which we include for completeness. By assumption, $u$ is compactly supported, so it suffices to show that for a sufficiently large $T$, $\E_{a,l} u(L_{\inv(\gamma)\wedge T})$ is continuous. Since $u$ is Lipschitz, it suffices to show that $\E_{a,l} \|L_{\inv(\gamma-\delta)\wedge T}-L_{\inv(\gamma+\delta)\wedge T}\|_1\rightarrow 0$ as $\delta\downarrow 0$, with $\|\cdot\|_1$ being the 1-norm. Let $J_\delta$ be the event that a jump occurs during the time interval $[\gamma-\delta,\gamma+\delta]$ at $a$. Then, using the total expectation rule with respect to the event of $J_\delta$ and $J_\delta^c$, we have
\[\E_{a,l} \|L_{\inv(\gamma-\delta)\wedge T}-L_{\inv(\gamma+\delta)\wedge T}\|_1\leq \delta\P_{a,l}(J_\delta^c)+T\P_{a,l}(J_\delta)\leq \delta+T\cdot O(\delta)=O(\delta).\]
}, we get
\[\lim_{\epsilon\downarrow 0} \int_0^\infty\E_a u(l+\Ell_{\inv(\gamma)})\delta_{0,\epsilon}( l_a+\gamma-r)d\gamma=\E_a u(l+\Ell_{\inv(r-l_a)}).\]

Therefore, taking the limit $\epsilon\downarrow 0$ in the previous equation, we have
\[\big[\delta_0(l_a) \E_a u(l+\Ell_{\inv(r-l_a)})\big]_\mu
=\big[ \delta_0(l_a-r) \E_a u(l+\Ell_{\inv(r-l_a)})\big]_\mu.\]

Since $\inv(r-l_a)=\inv(0)=0$ when $l_a=r$, and $\inv(r-l_a)=\inv(r)$ when $l_a=0$, we conclude
\[\big[ \delta_0(l_a) \E_a u(l+\Ell_{\inv(r)})\big]_\mu=\big[ \delta_r(l_a)  u(l)\big]_\mu.\]

Now the left-hand side corresponds to expectation with respect to a conditioned 1-permanental vector $\ell$ with kernel $G^h=(-Q^h)^{-1}$, where $Q^h:=Q-he_ae_a^T$, conditioned on $\ell_a=0$. The right-hand side, however, corresponds to expectation with respect to $\ell$ conditioned on $\ell_a=r$. To see this, let us decompose the Laplacian $Q$ as stated in \cref{eqn:decomp}(here we identify node $1$ in the decomposition of \cref{eqn:decomp} with node $a$ mentioned above without loss of generality). Recall that we have Lemma \ref{lem:conditioned}, with $l_*:=(l_i:\,i\neq a)$ here,
\[\rho(l_*|l_a=r)=:\rho_r(l_*) =  \int_{(0,2\pi)^{n-1}}\frac{|-Q_{**}|}{(2\pi)^{n-1}}\,
\exp\Big(\langle (\phi_*+ \sqrt{r}Q_{**}^{-T}Q_{a*}^h), Q_{**}(\bar{\phi}_*+ \sqrt{r}Q_{**}^{-1}Q_{*a}^h)\rangle\Big)d\theta_*.\]

Here we used $Q_{**}^h=Q_{**}$. To apply this formula, note that
\begin{equation*}
\begin{aligned}
\big[ \delta_0(l_a) & \E_a u(l+\Ell_{\inv(r)})\big]_\mu
=\frac{1}{(2\pi)^n}\int_{(0,2\pi)^{n}}\int_{\mathbb{R}_+^{n}} \delta_0(l_a)\E_a u(l+\Ell_{\inv(r)}) e^{\langle \phi,Q\bar{\phi}\rangle}dld\theta\\
&=\frac{1}{(2\pi)^n}\int_{(0,2\pi)^{n-1}}\int_{\mathbb{R}_+^{n-1}} \bigg(\int_{(0,2\pi)}\int_{\mathbb{R}_+} \delta_0(l_a)\E_a u(l+\Ell_{\inv(r)}) e^{\langle \phi,Q\bar{\phi}\rangle}dl_ad\theta_a \bigg)dl_*d\theta_*\\
&=\frac{1}{(2\pi)^{n-1}}\int_{(0,2\pi)^{n-1}}\int_{\mathbb{R}_+^{n-1}} \E_a u(l+\Ell_{\inv(r)}) e^{\langle \phi_*,Q_{**}\bar{\phi}_*\rangle}dl_*d\theta_*.\\
\end{aligned}
\end{equation*}

Here we integrated out $l_a$ and $\theta_a$ using the fact that
\[\int_{\mathbb{R}_+} \delta_0(l_a)\E_a u(l+\Ell_{\inv(r)}) e^{\langle \phi,Q\bar{\phi}\rangle}dl_a= \E_a u(\tilde{l}+\Ell_{\inv(r)}) e^{\langle \phi_*,Q_{**}\bar{\phi}_*\rangle}.\]

Here $\tilde{l}$ is $l$ under the restriction $l_a=0$. We can use Lemma \ref{lem:conditioned} with $r=0$ as
\begin{equation*}
\begin{aligned}
\big[ \delta_0(l_a) \E_a u(l+\Ell_{\inv(r)})\big]_\mu
&=\frac{1}{|-Q_{**}|}\int_{\mathbb{R}_+^{n-1}} \E_a u(\tilde{l}+\Ell_{\inv(r)})\rho_0(l_*)dl_*\\
&=\frac{1}{|-Q_{**}|}\,\E_a\big[ u(\ell+\Ell_{\inv(r)})\,|\,\ell_a=0\big].
\end{aligned}
\end{equation*}

Similarly, we know
\begin{equation*}
\begin{aligned}
\big[ \delta_r(l_a)  u(l)\big]_\mu
&=\frac{1}{(2\pi)^n}\int_{(0,2\pi)^{n}}\int_{\mathbb{R}_+^{n}} \delta_r(l_a)u(l) e^{\langle \phi,Q\bar{\phi}\rangle}dld\theta\\
&=\frac{1}{(2\pi)^n}\int_{(0,2\pi)^{n-1}}\int_{\mathbb{R}_+^{n-1}} \bigg(\int_{(0,2\pi)}\int_{\mathbb{R}_+} \delta_r(l_a)u(l) e^{\langle \phi,Q\bar{\phi}\rangle}dl_ad\theta_a \bigg)dl_*d\theta_*\\
&=\frac{1}{(2\pi)^n}\int_{(0,2\pi)^{n-1}}\int_{\mathbb{R}_+^{n-1}} \bigg(\int_{(0,2\pi)} u(l') e^{\langle \phi',Q\bar{\phi}'\rangle}d\theta_a \bigg)dl_*d\theta_*.
\end{aligned}
\end{equation*}

Here $l'$ is $l$ restricted to $l_a=r$. Similarly for $\phi'$ and $\bar{\phi}'$. We now continue to compute that
\begin{equation*}
\begin{aligned}
& \frac{1}{(2\pi)^n}\int_{(0,2\pi)^n} u(l') e^{\langle \phi',Q\bar{\phi}'\rangle}d\theta_a d\theta_*\\
&=\frac{u(l')e^{Q_{aa}r}}{(2\pi)^n}\int_{(0,2\pi)^n}\exp\bigg(\sum_{i,j\neq a} Q_{ij}\sqrt{l_i l_j}e^{\compi (\theta_i-\theta_j)}
+\sum_{i\neq a}\big(Q_{ia}\sqrt{l_i r}e^{\compi(\theta_i-\theta_a)}+Q_{ai}\sqrt{l_i r}e^{\compi(\theta_a-\theta_i)}\big) \bigg)d\theta_a d\theta_*\\
&=\frac{u(l')e^{Q_{aa}r}}{(2\pi)^n}\int_{(0,2\pi)^n}\exp\bigg(\sum_{i,j\neq a} Q_{ij}\sqrt{l_i l_j}e^{\compi (\theta_i'-\theta_j')}
+\sum_{i\neq a}\big(Q_{ia}\sqrt{l_i r}e^{\compi \theta_i'}+Q_{ai}\sqrt{l_i r}e^{-\compi \theta_i'}\big) \bigg)d\theta_a' d\theta_*'\\
&=\frac{u(l')e^{Q_{aa}r}}{(2\pi)^{n-1}}\int_{(0,2\pi)^{n-1}}\exp\bigg(\sum_{i,j\neq a} Q_{ij}\sqrt{l_i l_j}e^{\compi (\theta_i'-\theta_j')}
+\sum_{i\neq a}\big(Q_{ia}\sqrt{l_i r}e^{\compi \theta_i'}+Q_{ai}\sqrt{l_i r}e^{-\compi \theta_i'}\big) \bigg) d\theta_*'\\
&=\frac{u(l')e^{Q_{aa}r}}{|-Q_{**}|e^{(Q_{aa}^h+(G_{aa}^h)^{-1})r}}\rho_r(l_*).
\end{aligned}
\end{equation*}

In the second step, we used the rotation trick to construct $\theta'$ as in the proof of Lemma \ref{lem:conditioned}. In the third step, we used $Q_{ij}^h=Q_{ij}$ unless $i=j=a$. We also used $G_{aa}^h=|G^h||-Q_{**}^h|=|G^h||-Q_{**}|$ and the following intermediate step in Lemma \ref{lem:conditioned}:
\[\rho_r(l_*)
=\int_{(0,2\pi)^{n-1}}\frac{G_{aa}^h e^{(Q_{aa}^h+(G_{aa}^h)^{-1})r}}{(2\pi)^{n-1}|G^h|}\, \exp\bigg(\sum_{i,j\neq a} Q_{ij}\sqrt{l_i l_j}e^{\compi (\theta_i'-\theta_j')} +\sum_{i\neq a}\big(Q_{ia}\sqrt{l_i r}e^{\compi \theta_i'}+Q_{ai}\sqrt{l_i r}e^{-\compi \theta_i'}\big) \bigg)d\theta_*'.\]

Note that we have $G_{aa}^h=\frac{1}{h}$ since we kill the chain at rate $h$ at state $a$. Therefore, $Q_{aa}-Q_{aa}^h-(G_{aa}^h)^{-1}=h-(G_{aa}^h)^{-1}=0$. This allows us to conclude that
\[\big[ \delta_r(l_a)  u(l)\big]_\mu=\frac{1}{|-Q_{**}|}\int_{\mathbb{R}_+^n}u(l')\rho_r(l_*)dl_*=\frac{1}{|-Q_{**}|}\E\big[u(\ell)\,|\,\ell_a=r\big]\]

This proves the theorem.
\end{proof}

\subsection{non-reversible version of Eisenbaum's isomorphism theorem}

To the best of our knowledge, the non-reversible version of Eisenbaum's isomorphism theorem hasn't been discovered previously, probably because one cannot interpret one side of the equation probabilistically in this case.

\vspace{.3em} \begin{thm}[non-reversible Eisenbaum's isomorphism]
\label{thm:eisenbaum}
Consider an irreducible continuous-time Markov chain $X_t$ on discrete space $[n]$ with Laplacian $Q$. Let $\zeta$ be an independent exponential random variable with mean $h^{-1}$ for some $h>0$. For any $ u:\orthp\rightarrow \mathbb{C}$ with property $\mathfrak{F}$ and any $a\in [n], r>0$, we have
\[
\bigg[ \bigg(\frac{\phi_a+\sqrt{r}}{\sqrt{r}}\bigg) u\big(|\phi+\sqrt{r}\allone|^2\big)\bigg]_{\mu_{h\allone}}
= \big[\E_a^*\big[ u\big(|\phi+\sqrt{r}\allone|^2+\Ell_\zeta\big)\big]\big]_{\mu_{h\allone}}.\]
\end{thm}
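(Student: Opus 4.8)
The plan is to run the same machinery as in the proofs of \cref{thm:dynkin} and \cref{thm:rayknight} --- a Ward/integration--by--parts identity for the twisted Gaussian measure, fed into the density formula of \cref{lem:density} --- after first normalizing away the shift $\sqrt r\allone$. The structural input that makes the shift tractable is that $\allone\in\ker Q\cap\ker Q^T$ (the rows of $Q$ sum to zero, and the columns do as well because the continuous--time chain has the uniform stationary law), so that $(hI-Q)^{-1}\allone=h^{-1}\allone$; in other words $\sqrt r\allone$ is ``harmonic'' for the killed generator. Consequently the change of variables $\phi=\psi-\sqrt r\allone$ turns $\mu_{h\allone}$ into the probability measure $\tilde\mu$ proportional to $e^{\langle\psi,Q\bar\psi\rangle}\,e^{-h\|\psi-\sqrt r\allone\|^2}$, i.e.\ the $Q$--twisted Gaussian killed ``around'' $\sqrt r\allone$ rather than around $0$, and both sides of the asserted identity shed the explicit shift. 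So the theorem is equivalent to
\[
\Big[\tfrac{\psi_a}{\sqrt r}\,u(\psi\bar\psi)\Big]_{\tilde\mu}\;=\;\big[\E_a^*\,u(\psi\bar\psi+\Ell_\zeta)\big]_{\tilde\mu},
\]
where $\psi\bar\psi=(\psi_i\bar\psi_i)_i$ and $\zeta\sim\Exp(h)$ is independent of the reversed chain.

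The key algebraic tool I would isolate is a ``sourced'' version of the integration--by--parts identities behind \cref{lem:ward}. Running $0=\int(2\pi\compi)^{-n}\partial_{\bar\psi_k}\big(\tilde g(\psi)\,e^{\tilde E(\psi)}\big)\,d\bar\psi\,d\psi$ with $\tilde E(\psi)=\langle\psi-\sqrt r\allone,(Q-hI)(\bar\psi-\sqrt r\allone)\rangle$, the derivative of the exponent now reads $\partial_{\bar\psi_k}\tilde E=((Q-hI)^T\psi)_k+\sqrt r h$, so the constant $\sqrt r h$ survives; contracting against the Green's kernel $G_{h\allone}=(hI-Q)^{-1}$ and using $\allone^T G_{h\allone}=h^{-1}\allone^T$ to get $\sum_k(G_{h\allone})_{ka}=h^{-1}$, this collapses to the clean identity
\[
[\psi_a\,\tilde g]_{\tilde\mu}\;=\;\sqrt r\,[\tilde g]_{\tilde\mu}\;+\;\sum_k (G_{h\allone})_{ka}\,[(\partial_{\bar\psi_k}\tilde g)]_{\tilde\mu},
\]
valid for every $\tilde g$ of polynomial growth. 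It is the presence of the $\sqrt r\,[\tilde g]_{\tilde\mu}$ term --- divided by $\sqrt r$ it is exactly the ``$1$'' in $\tfrac{\phi_a+\sqrt r}{\sqrt r}=1+\tfrac{\phi_a}{\sqrt r}$ --- that hardwires the correct prefactor into the statement.

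Taking $\tilde g(\psi)=u(\psi\bar\psi)$, so that $\partial_{\bar\psi_k}\tilde g=\psi_k\,(\partial_k u)(\psi\bar\psi)$, the identity becomes $\big[\tfrac{\psi_a}{\sqrt r}u(\psi\bar\psi)\big]_{\tilde\mu}=[u(\psi\bar\psi)]_{\tilde\mu}+\tfrac1{\sqrt r}\sum_k(G_{h\allone})_{ka}[\psi_k(\partial_k u)(\psi\bar\psi)]_{\tilde\mu}$. I would then recognize the right--hand side as the Duhamel/Dyson expansion of $\big[\E_a^*u(\psi\bar\psi+\Ell_\zeta)\big]_{\tilde\mu}$: since $u$ depends only on the local--time variable the ``$Q^T$''--part of the generator of $(X_s,\Ell_s)$ annihilates it, so $\E_a^*u(l+\Ell_\zeta)-u(l)=\E_a^*\!\big[\int_0^\zeta(\partial_{X_s}u)(l+\Ell_s)\,ds\big]$, whose leading increment contributes $\sum_k(\partial_k u)(l)\,\E_a^*[\Ell_\zeta^k]=\sum_k(\partial_k u)(l)\,(G_{h\allone})_{ka}$ (using $\E_a^*[\Ell_\zeta^k]=(G_{h\allone})_{ka}$); iterating the boxed identity on $[\psi_k(\partial_k u)(\psi\bar\psi)]_{\tilde\mu}$ reproduces, increment by increment, the higher Duhamel terms. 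Undoing the change of variables then yields the theorem. As a sanity check, in the reversible case $\tilde\mu$ is a genuine complex Gaussian centered at $\sqrt r\allone$, the boxed identity is the usual Gaussian (Stein) formula, and one recovers Eisenbaum's classical isomorphism theorem.

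The step I expect to be the main obstacle is making this term--by--term matching rigorous: the naive Neumann series obtained by iterating the identity need not converge absolutely for a general smooth compactly supported $u$ (its high--order derivatives are unbounded in the order), so instead of summing the series one should keep a finite expansion of $\E_a^*u(l+\Ell_\zeta)$ with an explicit remainder and show that $[\psi_a\cdot(\text{remainder})]_{\tilde\mu}=0$, using the compact support of $u$ together with the exponential (spectral--gap) tail of the $1$--permanental density supplied by \cref{lem:density}. The attendant absolute--integrability bookkeeping is of the same type as in the proof of \cref{lem:ward} --- symmetrize as in \eqref{eq:symmetrize} and use that the reversed killed chain has a uniformly finite Green's kernel --- and I would defer those routine estimates to the appendix.
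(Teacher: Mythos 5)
Your route is genuinely different from the paper's, and its algebraic skeleton is sound: the change of variables $\phi=\psi-\sqrt r\allone$ (using $Q\allone=0$ and $\allone^TQ=0$), the computation $\partial_{\bar\psi_k}\tilde E=((Q-hI)^T\psi)_k+\sqrt r h$, and the contraction against $G_{h\allone}$ with $\allone^TG_{h\allone}=h^{-1}\allone^T$ all check out, so your ``sourced'' identity $[\psi_a\tilde g]_{\tilde\mu}=\sqrt r[\tilde g]_{\tilde\mu}+\sum_k(G_{h\allone})_{ka}[\partial_{\bar\psi_k}\tilde g]_{\tilde\mu}$ is correct, as is the first-order matching with $\E_a^*[\Ell_\zeta^k]=(G_{h\allone})_{ka}$. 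However, there is a genuine gap exactly where you flag it: the entire analytic content of the theorem sits in the resummation of the Duhamel/Neumann series, and you do not carry it out. The $N$-th iterate involves $N$-fold contractions of $G_{h\allone}$ against $N$-th partial derivatives of $u$; the theorem assumes only property $\mathfrak{F}$ (smoothness with sub-polynomial tails of each derivative separately), which gives no control on the growth in $N$ of $\sup|\partial^{(N)}u|$, so the series need not converge, and $u$ is \emph{not} assumed compactly supported, so the remainder-killing argument you sketch does not apply under the stated hypotheses. Moreover, the remainder is not something one shows is zero; one must show the IBP remainder equals the corresponding Duhamel remainder and that both vanish, which is not addressed. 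Closing this non-iteratively amounts to applying your identity to the resolvent $F(i,l)=\E^*_{i,l}[u(\Ell_\zeta)]$ and using $(h-\mathcal{Q}^*)F=hu$ --- i.e.\ re-deriving \cref{lem:ward}.

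That is precisely how the paper avoids the issue: it applies \cref{lem:ward} \emph{once}, with $g(\phi)=e^{\sqrt r h\sum_i(\phi_i+\bar\phi_i)-nrh}$ and $f(i,l)=u(l)e^{-h\sum_j l_j}\ind_{i=a}$. Since $T_ig=\sqrt r h\,g$ for every $i$, the right-hand side of the Ward identity produces the full resolvent $\int_0^\infty e^{-ht}\E_i[\,\cdot\,\ind_{X_t=a}]\,dt$ in closed form --- the summation of your Duhamel series is already performed inside \cref{lem:ward} via the Kolmogorov backward equation and $\int_0^\infty\partial_tf_t\,dt=f_\infty-f_0$ --- after which only the shift $\phi'=\phi-\sqrt r\allone$, division by $|hI-Q|$, and path reversal ($\sum_i\E_i[\,\cdot\,\ind_{X_t=a}]=\E_a^*[\,\cdot\,]$) remain. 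I recommend you either invoke \cref{lem:ward} with this choice of $g$ (your tilting $e^{-h\|\psi-\sqrt r\allone\|^2}$ is the same object in disguise), or, if you want to keep the bare IBP route, restrict first to Laplace-transform test functions $u(l)=e^{-\langle\lambda,l\rangle}$, where both sides are computable in closed form, and extend by density.
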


\begin{proof}
In Lemma \ref{lem:ward}, we take $g(\phi)=e^{h\sum_i |\phi_i|^2-h\sum_i|\phi_i-\sqrt{r}|^2}=e^{\sqrt{r}h\sum_i (\phi_i+\bar{\phi}_i)-nrh}$, and $f(i,l)= u(l)e^{-h\sum_jl_j}\ind_{i=a}$.\footnote{The function $g$ does not decay exponentially fast as
%stated
required in Lemma \ref{lem:ward}, but one can check that the sub-Gaussianity of $f$ will kill any exponential growth in $g$.}  This yields
\[\big[ \phi_a u(\phi\bar{\phi})e^{-h\sum_i|\phi_i-\sqrt{r}|^2}\big]_\mu=\sqrt{r}h\sum_i\bigg[ g(\phi)\int_0^\infty\E_{i,\phi\bar{\phi}}\big[ u(\Ell_t)e^{-h t}\ind_{X_t=a}\big]dt\bigg]_\mu.\]

Now note that $g(\phi)\E_{i,\phi\bar{\phi}}[ u(\Ell_t)e^{-ht}\ind_{X_t=a}]=e^{-h\sum_i|\phi_i-\sqrt{r}|^2}\E_i[ u(\phi\bar{\phi}+\Ell_t)e^{-ht}\ind_{X_t=a}]$. Hence,
\[\big[ \phi_a u(\phi\bar{\phi})e^{-h\sum_i|\phi_i-\sqrt{r}|^2}\big]_\mu=\sqrt{r}h\sum_i\bigg[ e^{-h\sum_i|\phi_i-\sqrt{r}|^2}\int_0^\infty\E_i\big[ u(\phi\bar{\phi}+\Ell_t)e^{-ht}\ind_{X_t=a}\big]dt\bigg]_\mu.\]

Make the change of variable $\phi'=\phi-\sqrt{r}\allone$, and notice that $\langle \phi',Q\bar{\phi}'\rangle=\langle \phi,Q\bar{\phi}\rangle$.
%Divide
Dividing both sides by $|-Q_{h\allone}|=|hI-Q|$ gives
\[\bigg[ \bigg(\frac{\phi_a'+\sqrt{r}}{\sqrt{r}}\bigg) u\big(|\phi'+\sqrt{r}\allone|^2\big)\bigg]_{\mu_{h\allone}}
= h\sum_i\bigg[\int_0^\infty\E_i\big[ u\big(|\phi'+\sqrt{r}\allone|^2+\Ell_t\big)e^{-ht}\ind_{X_t=a}\big]dt\bigg]_{\mu_{h\allone}}.\]

Reverse the chain on the right-hand side\footnote{Recall that the generator for the reverse process is $Q^T$, and the transition kernel is $P_t^*(i,j)=(e^{tQ^T})_{i,j}$. Since the original Markov process has transition kernel $P_t(j,i)=(e^{tL})_{j,i}$, we know that $P_t^{*}(i,j)=P_t(j,i)$ and they should yield exactly the same bridge measure on paths of length $t$ under path reversal.}, and get

\begin{equation*}
\begin{aligned}
\bigg[ \bigg(\frac{\phi_a'+\sqrt{r}}{\sqrt{r}}\bigg) u\big(|\phi'+\sqrt{r}\allone|^2\big)\bigg]_{\mu_{h\allone}}
&= h\sum_i\bigg[\int_0^\infty\E_a^*\big[ u\big(|\phi'+\sqrt{r}\allone|^2+\Ell_t\big)e^{-ht}\ind_{X_t=i}\big]dt\bigg]_{\mu_{h\allone}}\\
&= h\bigg[\int_0^\infty\E_a^*\big[ u\big(|\phi'+\sqrt{r}\allone|^2+\Ell_t\big)e^{-ht}\big]dt\bigg]_{\mu_{h\allone}}.
\end{aligned}
\end{equation*}

The change of summation and integral can be justified by the bounded convergence theorem, due to the sub-Gaussianity of the density of $[\cdot]_{\mu_{h\allone}}$ and property $\mathfrak{F}$ of the function $u$.

Let $\zeta$ be an exponential r.v. with mean $h^{-1}$, independent of the recurrent random walk. Since we have
\[\E_a^*\big[ u\big(|\phi'+\sqrt{r}\allone|^2+\Ell_\zeta\big)\big]
=h\int_0^\infty\E_a^*\big[ u\big(|\phi'+\sqrt{r}\allone|^2+\Ell_t\big)e^{-ht}\big]dt,\]

%then
we conclude that
\[
\bigg[ \bigg(\frac{\phi_a'+\sqrt{r}}{\sqrt{r}}\bigg) u\big(|\phi'+\sqrt{r}\allone|^2\big)\bigg]_{\mu_{h\allone}}
= \big[\E_a^*\big[ u\big(|\phi'+\sqrt{r}\allone|^2+\Ell_\zeta\big)\big]\big]_{\mu_{h\allone}}.\]
\end{proof}

\vspace{.3em} \begin{rem}
There is a generalized version of \cref{thm:eisenbaum} above. Consider any $h_1,\cdots, h_n\geq 0$ such that $\sum_{i=1}^n h_i>0$. Take $g(\phi)=e^{\sum_i h_i|\phi_i|^2-\sum_ih_i|\phi_i-\sqrt{r}|^2}=e^{\sqrt{r}\sum_i h_i(\phi_i+\bar{\phi}_i)-r\sum_i h_i}$, and $f(i,l)= u(l)e^{-\sum_jh_jl_j}\ind_{i=a}$ in Lemma \ref{lem:ward}. This yields
\[\big[ \phi_a u(\phi\bar{\phi})e^{-\sum_ih_i|\phi_i-\sqrt{r}|^2}\big]_\mu=\sqrt{r}\sum_ih_i\bigg[ g(\phi)\int_0^\infty\E_{i,\phi\bar{\phi}}\big[ u(\Ell_t)e^{-\sum_j h_j\Ell_t^j}\ind_{X_t=a}\big]dt\bigg]_\mu.\]

Note that $g(\phi)\E_{i,\phi\bar{\phi}}[ u(\Ell_t)e^{-\sum_j h_j\Ell_t^j}\ind_{X_t=a}]=e^{-\sum_ih_i|\phi_i-\sqrt{r}|^2}\E_i[ u(\phi\bar{\phi}+\Ell_t)e^{-\sum_j h_j\Ell_t^j}\ind_{X_t=a}]$. Hence,
\[\big[ \phi_a u(\phi\bar{\phi})e^{-\sum_ih_i|\phi_i-\sqrt{r}|^2}\big]_\mu=\sqrt{r}\sum_ih_i\bigg[ e^{-\sum_ih_i|\phi_i-\sqrt{r}|^2}\int_0^\infty\E_i\big[ u(\phi\bar{\phi}+\Ell_t)e^{-\sum_j h_j\Ell_t^j}\ind_{X_t=a}\big]dt\bigg]_\mu.\]

%Make
Making the change of variable $\phi'=\phi-\sqrt{r}\allone$, and
%divide
dividing both sides by $|-Q_{\bm{h}}|$ gives
\[\bigg[ \bigg(\frac{\phi_a'+\sqrt{r}}{\sqrt{r}}\bigg) u\big(|\phi'+\sqrt{r}\allone|^2\big)\bigg]_{\mu_{\bm{h}}}
= \sum_ih_i\bigg[\int_0^\infty\E_i\big[ u\big(|\phi'+\sqrt{r}\allone|^2+\Ell_t\big)e^{-\sum_j h_j\Ell_t^j}\ind_{X_t=a}\big]dt\bigg]_{\mu_{\bm{h}}}.\]
\end{rem}

\section{Comparison inequalities for permanental processes}

Kahane-type inequalities are inequalities between expectations of functions of centered multivariate Gaussian random variables when one assumes that the covariance matrices of the Gaussians can be compared in some sense, see e.g.~\cite[Lecture 5]{biskup17}. Slepian's lemma~\cite{biskup17, slepian62} is a famous special case. In \cite{eisenbaum13}, a Kahane-type inequality and a version of Slepian's lemma for 4-permanental processes has been given. However, 4-permanental processes are not directly related to the isomorphism theorems, which limits the applications of these results. We use the density-formula approach to derive analogous results for 1-permanental processes. We remark that Eisenbaum's method used a lemma of \cite{sato99}, which is not satisfied by 1-permanental vectors.

For any function $f:\mathbb{R}_+^n\rightarrow \mathbb{R}$, we say it has subexponential growth for $k\leq 2$ orders of partial derivatives if (i.)$f$ has subexponential growth; (ii.)both $\{\partial_i f :=\frac{\partial}{\partial l_i}f, i\in [n]\}$ and $\{\partial_{ij} f :=\frac{\partial^2}{\partial l_i\partial l_j}f, i,j\in [n]\}$ exist and have subexponential growth. Recall that any infinitely divisible kernel $G$ of a $1$-permanental random vector has an equivalent kernel $\tilde{G}$ that is an inverse M-matrix with a positive definite symmetric part according to \cref{rem-infdiv}. We have the following comparison inequality.

\vspace{.3em} \begin{lem}[Kahane-type inequality for 1-permanental vectors]
\label{lem:kahane}
Consider two kernels for $1$-permanental vectors $\{G_i, i=0,1\}$ and an interpolation $\{G_\alpha, \alpha\in[0,1]\}$ between them such that $G_\alpha$ is an inverse M-matrix with a positive definite symmetric part for any $\alpha\in[0,1]$, and $\alpha\mapsto G_{\alpha,ij}$ is of class $C^1([0,1])$, with $\frac{d}{d\alpha}G_{\alpha, ij}$ having a constant sign for any $i,j\in[n]$. Let $\{\ell_\alpha,\, \alpha\in[0,1]\}$ be the corresponding 1-permanental vectors.
%For any function $f:\mathbb{R}_+^n\rightarrow \mathbb{R}$
Let $f:\mathbb{R}_+^n\rightarrow \mathbb{R}$ be any function with subexponential growth for $k\leq 2$ orders of partial derivatives, such that for any $i\in[n]$, either of the following is true:
\begin{enumerate}
    \item $\frac{d}{d\alpha}G_{\alpha,ii}\geq 0$, $\forall \alpha\in[0,1]$, and $\partial_i f(l)+l_i\,\partial_{ii}f(l)\geq 0$, $\forall l\in\mathbb{R}_+^n$;
    \item $\frac{d}{d\alpha}G_{\alpha,ii}\leq 0$, $\forall \alpha\in[0,1]$, and $\partial_i f(l)+l_i\,\partial_{ii}f(l)\leq 0$, $\forall l\in\mathbb{R}_+^n$;
\end{enumerate}

and for any $i\neq j$, either of the following is true:
\begin{enumerate}
    \item $\frac{d}{d\alpha}G_{\alpha,ij}\geq 0$, $\forall \alpha\in[0,1]$, and $\partial_{ij}f(l)\geq 0$, $\forall l\in\mathbb{R}_+^n$;
    \item $\frac{d}{d\alpha}G_{\alpha,ij}\leq 0$, $\forall \alpha\in[0,1]$, and $\partial_{ij}f(l)\leq 0$, $\forall l\in\mathbb{R}_+^n$.
\end{enumerate}

Then we have $\E[f(\ell_1)]\geq \E[f(\ell_0)]$.
\end{lem}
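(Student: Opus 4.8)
The plan is to run a Gaussian-type interpolation argument in the spirit of Kahane's inequality, but carried out through the twisted-Gaussian/density representation of \cref{thm:lejan} rather than through a Gaussian-vector representation (the kernels here need not be symmetric, so there is no honest Gaussian to interpolate). First I would set $\psi(\alpha):=\E[f(\ell_\alpha)]$ and invoke \cref{thm:lejan} to write $\psi(\alpha)=\frac{1}{(2\pi\compi)^n|G_\alpha|}\int_{\mathbb{C}^n}f(\phi\bar\phi)\,e^{\langle\phi,Q_\alpha\bar\phi\rangle}\,d\bar\phi\,d\phi$ with $Q_\alpha:=-G_\alpha^{-1}$. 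Since $-\tfrac12(Q_\alpha+Q_\alpha^T)$ is positive definite and depends continuously on $\alpha\in[0,1]$, the modulus of the integrand is dominated uniformly in $\alpha$ by (subexponential in $\phi$)$\times e^{-c\|\phi\|^2}$ for a fixed $c>0$, and $|G_\alpha|$ stays bounded away from $0$; this justifies differentiating under the integral, so $\psi\in C^1[0,1]$ and $\psi'(\alpha)=\sum_{i,j}\frac{d}{d\alpha}G_{\alpha,ij}\cdot\Psi_{ij}(G_\alpha)$, where $\Psi_{ij}(G):=\frac{\partial}{\partial G_{ij}}\E[f(\ell)]$ for a $1$-permanental vector $\ell$ with kernel $G$.

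The heart of the argument is an explicit formula for $\Psi_{ij}$. Using $\frac{\partial}{\partial G_{ij}}|G|^{-1}=|G|^{-1}Q_{ji}$ and $\frac{\partial Q}{\partial G_{ij}}=QE_{ij}Q$ (with $E_{ij}$ the elementary matrix having a single $1$ in entry $(i,j)$, and $Q=-G^{-1}$), one gets $\Psi_{ij}(G)=\frac{1}{(2\pi\compi)^n|G|}\int_{\mathbb{C}^n}\big[Q_{ji}+(Q^T\phi)_i(Q\bar\phi)_j\big]f(\phi\bar\phi)\,e^{\langle\phi,Q\bar\phi\rangle}\,d\bar\phi\,d\phi$. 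Then I would integrate by parts twice in the Wirtinger derivatives: once using $(Q^T\phi)_i e^{\langle\phi,Q\bar\phi\rangle}=\partial_{\bar\phi_i}e^{\langle\phi,Q\bar\phi\rangle}$ and once using $(Q\bar\phi)_j e^{\langle\phi,Q\bar\phi\rangle}=\partial_{\phi_j}e^{\langle\phi,Q\bar\phi\rangle}$, with all boundary terms vanishing by the Gaussian decay. The $Q_{ji}$-term and the bare-$f$ term cancel along the way, leaving
\[
\Psi_{ii}(G)=\E\big[\partial_i f(\ell)+\ell_i\,\partial_{ii}f(\ell)\big],\qquad
\Psi_{ij}(G)=\frac{1}{(2\pi\compi)^n|G|}\int_{\mathbb{C}^n}\phi_i\bar\phi_j\,(\partial_{ij}f)(\phi\bar\phi)\,e^{\langle\phi,Q\bar\phi\rangle}\,d\bar\phi\,d\phi\quad(i\neq j).
\]

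Next I would pin down the signs. For $i=j$, since $\ell_i\ge 0$ almost surely, $\Psi_{ii}(G)\ge 0$ whenever $\partial_i f+l_i\partial_{ii}f\ge 0$ on $\mathbb{R}_+^n$ and $\le 0$ in the opposite case, so hypotheses (1)/(2) for the diagonal give $\frac{d}{d\alpha}G_{\alpha,ii}\cdot\Psi_{ii}(G_\alpha)\ge 0$. For $i\ne j$, I pass to polar coordinates $\phi_k=\sqrt{l_k}e^{\compi\theta_k}$; since $(\partial_{ij}f)(\phi\bar\phi)$ depends on $l$ only, $\Psi_{ij}(G)=\frac{c_n}{|G|}\int_{\mathbb{R}_+^n}(\partial_{ij}f)(l)\,h_{ij}(l)\,dl$ with $c_n>0$ and $h_{ij}(l)=e^{\sum_k Q_{kk}l_k}\int_{(0,2\pi)^n}\sqrt{l_il_j}\,e^{\compi(\theta_i-\theta_j)}\,e^{\sum_{k\ne m}Q_{km}\sqrt{l_kl_m}e^{\compi(\theta_k-\theta_m)}}\,d\theta$. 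Expanding the off-diagonal exponential as a power series and integrating over the angles annihilates every monomial whose phase is not identically zero; because $G$ is an inverse M-matrix, $Q=-G^{-1}$ has nonnegative off-diagonal entries, so every surviving term is a product of a nonnegative coefficient, a nonnegative monomial in the $l_k$'s, and $e^{\sum_k Q_{kk}l_k}>0$ (note $Q_{kk}<0$). Hence $h_{ij}\ge 0$, which forces $\Psi_{ij}(G)$ to inherit the (constant, by hypothesis) sign of $\partial_{ij}f$; matching against the prescribed sign of $\frac{d}{d\alpha}G_{\alpha,ij}$, every summand $\frac{d}{d\alpha}G_{\alpha,ij}\cdot\Psi_{ij}(G_\alpha)$ is $\ge 0$. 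Therefore $\psi'\ge 0$ on $[0,1]$ and $\E[f(\ell_1)]-\E[f(\ell_0)]=\int_0^1\psi'(\alpha)\,d\alpha\ge 0$.

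The step I expect to be the real obstacle is establishing nonnegativity of $h_{ij}$ --- equivalently, the sign of $\Psi_{ij}$ for $i\ne j$: the integral $\int_{\mathbb{C}^n}\phi_i\bar\phi_j(\partial_{ij}f)(\phi\bar\phi)e^{\langle\phi,Q\bar\phi\rangle}$ has no definite sign for a general kernel with positive definite symmetric part, and it is precisely the inverse-M-matrix hypothesis (nonnegativity of the off-diagonal of $Q$) that makes the angular integral nonnegative, in direct parallel with the proof that $\rho(l)$ in \cref{lem:density} is nonnegative. The remaining points are routine: a Tonelli argument to justify the term-by-term angular integration (all surviving terms share one sign), the vanishing of boundary terms in the two integrations by parts, and the uniform domination needed to differentiate $\psi$ under the integral and to apply the chain rule in $G_\alpha$.
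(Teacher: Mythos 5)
Your proposal is correct and follows essentially the same route as the paper: interpolate $\alpha\mapsto\E[f(\ell_\alpha)]$ via the representation of \cref{thm:lejan}, differentiate under the integral using the Jacobi formula, perform the same two Wirtinger integrations by parts to reach $\sum_{i,j}\frac{dG_{\alpha,ij}}{d\alpha}\int(\delta_{ij}\partial_j f+\phi_i\bar{\phi}_j\partial_{ij}f)\,d\mu_\alpha$, and settle the off-diagonal signs by the angular power-series expansion exploiting the nonnegative off-diagonal of $Q_\alpha$, which is precisely the content of \cref{lem:nonneg} in the appendix. You also correctly identify that step as the crux where the inverse-M-matrix hypothesis enters.
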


\begin{proof}
Let
\begin{equation}        \label{eq:qalpha}
Q_{\alpha}:=-(G_{\alpha})^{-1}.
\end{equation}
By \cref{thm:lejan}, we know that
\[\E[f(\ell_\alpha)]=\frac{1}{(2\pi\compi)^n|G_\alpha|}\int_{\mathbb{C}^n}f(\phi\bar{\phi})e^{\langle \phi,Q_{\alpha}\bar{\phi}\rangle}d\bar{\phi} d\phi .\]

Therefore, we have
\[\E[f(\ell_1)]-\E[f(\ell_0)]
= \int_0^1\Big(\frac{\partial}{\partial \alpha} \E[f(\ell_\alpha)]\Big)d\alpha\\
= \int_0^1\frac{\partial}{\partial \alpha}\bigg(\int_{\mathbb{C}^n}\frac{1}{(2\pi\compi)^n|G_\alpha|}f(\phi\bar{\phi})e^{\langle \phi,Q_{\alpha}\bar{\phi}\rangle}d\bar{\phi} d\phi \bigg)d\alpha.\]

Computing the derivative as\footnote{Recall that for a %smooth functional differentiable function $f(A)$ of matrix $A$, we have $\frac{d}{dA}f(A):=\big(\frac{\partial}{\partial A_{ji}}f(A)\big)_{i,j\in[n]}$. Or equivalently, $f(A)+\tr(\frac{d}{dA}f(A)dA)$ is the best linear approximation to $f(A+dA)$ in the sense that $\|f(A+dA)-f(A)-\tr(\frac{d}{dA}f(A)dA)\|=o(\|dA\|)$.
 }
\[\frac{\partial}{\partial \alpha}\bigg(\frac{1}{|G_\alpha|}e^{\langle \phi,Q_{\alpha}\bar{\phi}\rangle}\bigg)=\tr\bigg(\frac{d}{d G_{\alpha}}\bigg(\frac{1}{|G_\alpha|}e^{-\langle \phi,G_\alpha^{-1}\bar{\phi}\rangle}\bigg)\, \frac{d G_\alpha}{d\alpha}\bigg),\]

we know that
\[
\E[f(\ell_1)]-\E[f(\ell_0)]
= \frac{1}{(2\pi\compi)^n}
\int_0^1\int_{\mathbb{C}^n}f(\phi\bar{\phi})\, \tr\bigg(\frac{d}{d G_{\alpha}}\bigg(\frac{1}{|G_\alpha|}e^{-\langle \phi,G_\alpha^{-1}\bar{\phi}\rangle}\bigg)\, \frac{d G_\alpha}{d\alpha}\bigg)d\bar{\phi} d\phi d\alpha.
\]

We remark that interchanging
%differentiation and integral
the derivative and the integral can be justified by the absolute integrability of the partial %differentiation. derivative.
%the interchange? Please clarify.}
The detailed justification of this fact can be found in the appendix (\cref{apx:kahane}).

The Jacobi formula (see, e.g., Section 0.8.2 of \cite{hj12}) states that for invertible matrix $A$, $\frac{\partial}{\partial A_{ij}} |A|=(adj(A))_{ji}$, where the adjugate is $adj(A):=|A|A^{-1}$. It is then easy to verify that
\[\frac{d}{d G_{\alpha}}\bigg(\frac{1}{|G_\alpha|}\bigg)
=-\frac{1}{|G_\alpha|^2}\, \frac{d|G_\alpha|}{d G_\alpha} =-\frac{1}{|G_\alpha|^2}\, |G_\alpha|(G_\alpha^{-1}) =\frac{1}{|G_\alpha|}Q_{\alpha}.\]

Using the fact that $\frac{d}{d A}\tr(A^{-1}B)=-A^{-1}BA^{-1}$, we know that
\[
\frac{d}{d G_{\alpha}} e^{-\langle \phi,G_\alpha^{-1}\bar{\phi}\rangle}
=-e^{-\langle \phi,G_\alpha^{-1}\bar{\phi}\rangle}\, \frac{d}{dG_\alpha}\tr\big(G_\alpha^{-1}\bar{\phi}\phi^T\big) = e^{-\langle \phi,G_\alpha^{-1}\bar{\phi}\rangle}\, G_\alpha^{-1}\bar{\phi}\phi^TG_\alpha^{-1} = e^{-\langle \phi,G_\alpha^{-1}\bar{\phi}\rangle}\, Q_{\alpha}\bar{\phi}\phi^TQ_{\alpha}.
\]

%Denote $d\mu_\alpha(\phi):=(2\pi\compi)^{-n}|G_\alpha|^{-1}e^{-\langle \phi,G_\alpha^{-1}\bar{\phi}\rangle}d\bar{\phi}d\phi$ as
Let
\begin{equation}        \label{eq:dmualpha}
d\mu_\alpha(\phi):=(2\pi\compi)^{-n}|G_\alpha|^{-1}e^{-\langle \phi,G_\alpha^{-1}\bar{\phi}\rangle}d\bar{\phi}d\phi
\end{equation}
denote the normalized twisted Gaussian density. We know that
\begin{equation*}
\begin{aligned}
\E[f(\ell_1)]& -\E[f(\ell_0)]
=\int_0^1\int_{\mathbb{C}^n}f(\phi\bar{\phi})\,
\tr\big( (Q_{\alpha} + Q_{\alpha}\bar{\phi}\phi^TQ_{\alpha}) \frac{d G_\alpha}{d\alpha}\big) d\mu_\alpha(\phi)d\alpha.\\
=&\int_0^1\int_{\mathbb{C}^n}f(\phi\bar{\phi})\,
\sum_{i,j}\frac{d G_{\alpha,ij}}{d\alpha}\bigg( Q_{\alpha,ji} + \sum_{i',j'}Q_{\alpha,jj'}Q_{\alpha,i'i}\phi_{i'}\bar{\phi}_{j'} \bigg)\,
d\mu_\alpha(\phi)d\alpha \\
=&\int_0^1\int_{\mathbb{C}^n}\,
\sum_{i,j}\frac{d G_{\alpha,ij}}{d\alpha}\bigg( Q_{\alpha,ji}f(\phi\bar{\phi}) + \sum_{i'}Q_{\alpha,i'i}\Big(-\delta_{i'j}f(\phi\bar{\phi})-\phi_{i'}\bar{\phi}_j\partial_j f(\phi\bar{\phi})\Big) \bigg)
d\mu_\alpha(\phi)d\alpha.
\end{aligned}
\end{equation*}

In the third step, we used the following integration by parts for any $i,j\in[n]$:
\begin{equation*}
\begin{aligned}
\int_{\mathbb{C}^n} & \frac{\partial}{\partial \phi_j}\big(\phi_{i'}f(\phi\bar{\phi})e^{-\langle \phi, G_\alpha^{-1}\bar{\phi}\rangle}\big)d\bar{\phi} d\phi =0\quad\Rightarrow\\
0=& \int_{\mathbb{C}^n} \delta_{i'j}f(\phi\bar{\phi})d\mu_\alpha(\phi) +
\int_{\mathbb{C}^n}\phi_{i'}\bar{\phi}_j\partial_j f(\phi\bar{\phi})d\mu_\alpha(\phi) +
\int_{\mathbb{C}^n}\phi_{i'}f(\phi\bar{\phi})\bigg(\sum_{j'}Q_{\alpha, jj'}\bar{\phi}_{j'}\bigg)d\mu_\alpha(\phi).
\end{aligned}
\end{equation*}

We can simplify by canceling terms and doing another integration-by-parts, and get
\begin{equation*}
\begin{aligned}
\E[f(\ell_1)]& -\E[f(\ell_0)]
=\int_0^1\int_{\mathbb{C}^n}\,
\sum_{i,j}\frac{d G_{\alpha,ij}}{d\alpha}\bigg(  -\sum_{i'}Q_{\alpha,i'i}\phi_{i'}\bar{\phi}_j\partial_j f(\phi\bar{\phi}) \bigg)
d\mu_\alpha(\phi)d\alpha\\
=&\int_0^1\int_{\mathbb{C}^n}\,
\sum_{i,j}\frac{d G_{\alpha,ij}}{d\alpha}\bigg(  \delta_{ij}\partial_j f(\phi\bar{\phi}) + \phi_i\bar{\phi}_j\partial_{ij} f(\phi\bar{\phi}) \bigg)
d\mu_\alpha(\phi)d\alpha.
\end{aligned}
\end{equation*}

The integration-by-parts formula used here is that for any $i,j\in[n]$,
\begin{equation*}
\begin{aligned}
\int_{\mathbb{C}^n} & \frac{\partial}{\partial \bar{\phi}_i}\big(\bar{\phi}_j \partial_j f(\phi\bar{\phi})e^{-\langle \phi, G_\alpha^{-1}\bar{\phi}\rangle}\big)d\bar{\phi} d\phi =0\quad\Rightarrow\\
0=& \int_{\mathbb{C}^n} \delta_{ij}\partial_j f(\phi\bar{\phi})d\mu_\alpha(\phi) +
\int_{\mathbb{C}^n}\phi_i\bar{\phi}_j\partial_{ij} f(\phi\bar{\phi})d\mu_\alpha(\phi) +
\int_{\mathbb{C}^n}\bar{\phi}_j\partial_j f(\phi\bar{\phi})\bigg(\sum_{i'}Q_{\alpha, i'i}\phi_{i'}\bigg)d\mu_\alpha(\phi).
\end{aligned}
\end{equation*}

We remark that the validity of these integration-by-parts procedures is guaranteed by the subexponential growth conditions. Given that $G_\alpha$ is an infinitely divisible kernel of a $1$-permanental vector with a positive definite symmetric part, and $\partial_{ij}f(l)$ is subexponential for any $i,j$, using Lemma \ref{lem:nonneg}, we know that for any $i\neq j$,
\[ \int_{\mathbb{C}^n} \frac{d G_{\alpha,ij}}{d\alpha}\phi_i\bar{\phi}_j\partial_{ij} f(\phi\bar{\phi})
d\mu_\alpha(\phi) = \frac{d G_{\alpha,ij}}{d\alpha}\int_{\mathbb{C}^n}\phi_i\bar{\phi}_j\partial_{ij} f(\phi\bar{\phi}) d\mu_\alpha(\phi)\geq 0.\]

On the other hand, for $i\in [n]$, by \cref{thm:lejan}, we have
\[\int_{\mathbb{C}^n}\, \frac{d G_{\alpha, ii}}{d \alpha}\Big(\partial_i f(\phi\bar{\phi})+\phi_i\bar{\phi}_i\partial_{ii} f(\phi\bar{\phi}) \Big)
d\mu_\alpha(\phi)=\E \bigg[\frac{d G_{\alpha, ii}}{d \alpha}\big(\partial_i f(\ell_\alpha)+\ell_i\,\partial_{ii}f(\ell_\alpha)\big)\bigg]\geq 0.\]

It is then straightforward to check that the conditions on $f$ stated in the lemma imply that $\E[f(\ell_\alpha)]$ is increasing with $\alpha$. Therefore, $\E[f(\ell_1)]\geq \E[f(\ell_0)]$ and the claim is proved.
\end{proof}

\vspace{.3em} \begin{rem}[Sign patterns I]
Given any sign pattern $\{s_{ij}\in\{-1,1\},\, i,j\in V\}$, we know there indeed
%exists
exist nontrivial functions that
%satisfies
satisfy the equations $s_{ii}(\partial_i f(l)+l_i\,\partial_{ii}f(l))> 0$, and $s_{ij}\partial_{ij}f(l)>0$, $\forall l\in \mathbb{R}_+^n$, for any $i\neq j\in V$. Consider the function
\[f(l)=\sum_{(i,j):i\neq j}-s_{ij}\log(1+l_i+l_j)+\sum_i s_{ii}(2ne^{l_i}).\]

Then we know that $\forall i\in V$,
\begin{equation*}
\begin{aligned}
s_{ii}(\partial_i f(l)+l_i\,\partial_{ii}f(l))
&= \sum_{j\neq i}-\frac{s_{ii}s_{ij}}{(1+l_i+l_j)}+2ne^{l_i}+\sum_{j\neq i}\frac{s_{ii}s_{ij}l_i}{(1+l_i+l_j)^2}+2nl_ie^{l_i}\\
&\geq -(n-1)+2n -(n-1)>0.
\end{aligned}
\end{equation*}

In the first step, we used $s_{ii}^2=1$. In the second step, we used the fact that $\frac{1}{(1+l_i+l_j)}\in(0,1]$, $\frac{l_i}{(1+l_i+l_j)^2}\in(0,1]$, $e^{l_i}\geq 1$, and $l_ie^{l_i}\geq 0$. Also, note that $\forall i\neq j\in V$,
\[s_{ij}\partial_{ij}f(l)=\frac{1}{(1+l_i+l_j)^2}>0, \forall l\in\mathbb{R}_+^n.\]
\end{rem}

\vspace{.3em} \begin{rem}
Let us consider the following instance for Lemma \ref{lem:kahane}. Let $P_0, P_1\in M_n(\mathbb{R}_+)$ be two sub-stochastic matrices(i.e., every row sum $\leq 1$) such that $P_{0,ij}\leq P_{1,ij}\forall i,j\in[n]$. For any $s>1$, consider $G_0:=(sI-P_0)^{-1}$ and $G_1:=(sI-P_1)^{-1}$. They are both inverse M-matrices according to the definition of an M-matrix \cite{plemmons77}, since $G_0^{-1}=sI-P_0$ where $\rho(P_0)\leq \|P_0\|_\infty:=\max_i\sum_j |P_{0,ij}|\leq 1<s$ and the same for $G_1^{-1}$. Moreover, we know that $\{P_\alpha:=\alpha P_1+(1-\alpha)P_0,\,\alpha\in[0,1]\}$ is a family of sub-stochastic matrices, and $\{G_\alpha:=(sI-P_\alpha)^{-1},\,\alpha\in[0,1]\}$ is a family of inverse M-matrices. Using the Neumann series expansion, we have
    \[\frac{d}{d\alpha}G_\alpha=\frac{d}{d\alpha}\sum_{i=0}^\infty s^{-(i+1)}P_\alpha^i=\sum_{i=0}^\infty s^{-(i+1)}\frac{d}{d\alpha}P_\alpha^i=\sum_{i=1}^\infty s^{-(i+1)}\sum_{j=1}^{i-1}P_\alpha^{j}\Big(\frac{d}{d\alpha}P_\alpha\Big)P_\alpha^{i-1-j}.\]
Since $\frac{d}{d\alpha}P_\alpha=P_1-P_0$ is entry-wise nonnegative, we know that $\frac{d}{d\alpha}G_{\alpha,ij}\geq 0,\forall i,j\in[n]$. One can then plug in any function of interest that satisfies for any $i\in[n],\,\partial_i f(l)+l_i\,\partial_{ii}f(l)\geq 0, \forall l\in\mathbb{R}_+^n$ and for any $i\neq j\in[n],\,\partial_{ij} f(l)\geq 0, \forall l\in\mathbb{R}_+^n$ to obtain non-trivial comparison inequalities.
\end{rem}

\vspace{.3em} \begin{rem}[Sign patterns II]
Consider the following family of inverse M-matrices in $M_n(\mathbb{R}_+)$, where $n\geq 3$. Given $0<x<1$, let $y:=\sqrt{\frac{1}{n-2}\,x+\frac{(n-3)}{(n-2)}\,x^2}$. Consider the family of matrices $\mathcal{B}:=\{B: B_{ii}=1,\forall i\in [n]; 0<x<B_{ij}<y<1, \forall i\neq j\}$. This family of matrices is proved to be inverses of strictly diagonally dominant (both by rows and columns) M-matrices in \cite[Theorem 2]{will77}. We claim that any M-matrix $A$ that is strictly diagonally dominant both by rows and columns has a positive definite symmetric part, since for any $x\in\mathbb{R}^n$ such that $x\neq 0$,

\begin{align*}
\langle x,Ax\rangle
&= \sum_{i}|A_{ii}|x_i^2-\sum_{i,j:i<j}(|A_{ij}|+|A_{ji}|)x_ix_j \\
&> \sum_{i}\sum_{j\neq i}\frac{1}{2}(|A_{ij}|+|A_{ji}|)x_i^2-\sum_{i,j:i<j}(|A_{ij}|+|A_{ji}|)x_ix_j\\
&= \sum_{i,j:i<j}\frac{1}{2}(|A_{ij}|+|A_{ji}|)(x_i^2+x_j^2)-\sum_{i,j:i<j}(|A_{ij}|+|A_{ji}|)x_ix_j\\
&= \sum_{i,j:i<j}\frac{1}{2}(|A_{ij}|+|A_{ji}|)(x_i^2-2x_ix_j+x_j^2)\\
&= \sum_{i,j:i<j}\frac{1}{2}(|A_{ij}|+|A_{ji}|)(x_i-x_j)^2\geq 0.\\
\end{align*}

In the first step, we used the sign pattern of $A$ as an M-matrix. In the second step, we used strict diagonal dominance in rows and columns, together with the fact that $x\neq 0$. Therefore, we know that any matrix in the family $\mathcal{B}$, as an inverse of a strictly diagonally dominant (both by rows and columns) M-matrix, should have a positive definite symmetric part.

Moreover, for any two matrices $G_0, G_1\in\mathcal{B}$, we can construct a linear path between them as $G_\alpha:=\alpha G_1+(1-\alpha)G_0, \forall \alpha\in(0,1)$. The whole path will be within family $\mathcal{B}$ since this family forms a convex set. Moreover, the sign of $\frac{d}{d\alpha}G_\alpha=G_1-G_0$ will be constant entry-wise along the path. We can then vary $G_0$ and $G_1$ to obtain
%arbitrary off-diagonal sign patterns
whatever arbitrary off-diagonal sign pattern we want.
\end{rem}

Note that Eisenbaum derived the comparison lemma (Lemma 3.1 in \cite{eisenbaum13}) for 4-permanental vectors by directly working with the Fourier transform, which is very different from our approach. Also, their lemma requires the function $f$ to be bounded. With the density formula, we substantially relax this condition to functions with subexponential growth here.

Convolutional tricks can be used to generalize the above lemma to $k$-permanental vectors for $k\in\mathbb{N}_+$ under the condition
\[G_{1,ii}>(<)G_{0,ii} \Rightarrow k\,\partial_i f+l_i\,\partial_{ii}f\geq(\leq)0; \qquad G_{1,ij}>(<)G_{0,ij} \Rightarrow \partial_{ij}f\geq(\leq)0.\]

To see this, let us consider the case $k=2$. The general case follows the same idea. Let $\ell_i$ and $\ell_i'$ be independent 1-permanental vectors with kernel $G_i$, for $i=0,1$. Then $\ell_i+\ell_i'$ will be a 2-permanental vector with kernel $G_i$, which can be seen by calculating the moment generating function. Using the \cref{thm:lejan} for both $\ell_1,\ell_1'$ and $\ell_0,\ell_0'$, we have
\begin{equation*}
\begin{aligned}
\E\big[f( \ell_1 +\ell_1')\big]-& \E\big[f(\ell_0+\ell_0')\big]\\
=&\int_0^1\int_{\mathbb{C}^{2n}}\,
\frac{1}{(2\pi \compi)^{2n}}f(\phi\bar{\phi}+\phi'\bar{\phi}') \frac{\partial}{\partial \alpha}\bigg(\, \frac{1}{|G_\alpha|^2}e^{\langle \phi,Q_{\alpha}\bar{\phi} \rangle+\langle \phi',Q_{\alpha}\bar{\phi}' \rangle}\bigg)
d\bar{\phi} d\phi d\phi'd\bar{\phi}'d\alpha\\
=&\int_0^1\int_{\mathbb{C}^n}\int_{\mathbb{C}^n}\,
\frac{1}{(2\pi \compi)^n}f(\phi\bar{\phi}+\phi'\bar{\phi}') \frac{\partial}{\partial \alpha}\bigg(\, \frac{1}{|G_\alpha|}e^{\langle \phi,Q_{\alpha}\bar{\phi} \rangle} \bigg)d\mu_\alpha(\phi')
d\bar{\phi} d\phi d\alpha \,+ \\
&\hspace{1em}\int_0^1\int_{\mathbb{C}^n}\int_{\mathbb{C}^n}\,
\frac{1}{(2\pi \compi)^n}f(\phi\bar{\phi}+\phi'\bar{\phi}') \frac{\partial}{\partial \alpha}\bigg(\, \frac{1}{|G_\alpha|}e^{\langle \phi',Q_{\alpha}\bar{\phi}' \rangle} \bigg)d\mu_\alpha(\phi)
d\phi' d\bar{\phi}'d\alpha.
\end{aligned}
\end{equation*}

Now note that the proof of Lemma \ref{lem:kahane} shows that for any function $g:\mathbb{R}_+^n\rightarrow\mathbb{R}$ with subexponential growth for $k\leq 2$ orders of partial derivative, and any $\alpha\in(0,1)$, we have
\[\int_{\mathbb{C}^n}\frac{1}{(2\pi\compi)^n}\frac{\partial}{\partial \alpha}\Big(\frac{1}{|G_\alpha|}e^{\langle\phi, Q_\alpha\bar{\phi}\rangle}\Big)g(\phi\bar{\phi}) d\bar{\phi}d\phi = \int_{\mathbb{C}^n}\,
\sum_{i,j}\frac{d G_{\alpha,ij}}{d\alpha}\bigg(  \delta_{ij}\partial_j g(\phi\bar{\phi}) + \phi_i\bar{\phi}_j\partial_{ij} g(\phi\bar{\phi}) \bigg)
d\mu_\alpha(\phi).\]

Specifically, if for fixed $l=\phi\bar{\phi}$, we let $g(l):=\int_{\mathbb{C}^n}f(\phi\bar{\phi}+\phi'\bar{\phi'})d\mu_\alpha(\phi')$ and apply the above equation to $g(\phi\bar{\phi})$, we get
\begin{equation*}
\begin{aligned}
& \int_0^1\int_{\mathbb{C}^n}\int_{\mathbb{C}^n}\,
\frac{1}{(2\pi \compi)^n}f(\phi\bar{\phi}+\phi'\bar{\phi}') \frac{\partial}{\partial \alpha}\bigg(\, \frac{1}{|G_\alpha|}e^{\langle \phi,Q_{\alpha}\bar{\phi} \rangle} \bigg)d\mu_\alpha(\phi')
d\bar{\phi} d\phi d\alpha \\
=&\int_0^1\int_{\mathbb{C}^{2n}}\,
\sum_{i,j}\frac{d G_{\alpha,ij}}{d\alpha}\bigg(  \delta_{ij}\partial_j f(\phi\bar{\phi}+\phi'\bar{\phi}') + \phi_i\bar{\phi}_j\partial_{ij} f(\phi\bar{\phi}+\phi'\bar{\phi}') \bigg)
d\mu_\alpha(\phi)d\mu_\alpha(\phi')d\alpha.
\end{aligned}
\end{equation*}

Similarly, we can fix $\phi,\bar{\phi}$ and let $g(\phi'\bar{\phi}'):=\int_{\mathbb{C}^n}f(\phi\bar{\phi}+\phi'\bar{\phi'})d\mu_\alpha(\phi)$. This gives us
\begin{equation*}
\begin{aligned}
& \int_0^1\int_{\mathbb{C}^n}\int_{\mathbb{C}^n}\,
\frac{1}{(2\pi \compi)^n}f(\phi\bar{\phi}+\phi'\bar{\phi}') \frac{\partial}{\partial \alpha}\bigg(\, \frac{1}{|G_\alpha|}e^{\langle \phi',Q_{\alpha}\bar{\phi}'\rangle} \bigg)d\mu_\alpha(\phi)
d\bar{\phi}' d\phi' d\alpha \\
=&\int_0^1\int_{\mathbb{C}^{2n}}\,
\sum_{i,j}\frac{d G_{\alpha,ij}}{d\alpha}\bigg(  \delta_{ij}\partial_j f(\phi\bar{\phi}+\phi'\bar{\phi}') + \phi_i'\bar{\phi}_j'\partial_{ij} f(\phi\bar{\phi}+\phi'\bar{\phi}') \bigg)
d\mu_\alpha(\phi)d\mu_\alpha(\phi')d\alpha.
\end{aligned}
\end{equation*}

Therefore, we conclude that
\begin{equation*}
\begin{aligned}
\E\big[f(& \ell_1 +\ell_1')\big]-\E\big[f(\ell_0+\ell_0')\big]\\
=&\int_0^1\int_{\mathbb{C}^{2n}}\,
\sum_{i,j}\frac{d G_{\alpha,ij}}{d\alpha}\bigg(  2\delta_{ij}\partial_j f(\phi\bar{\phi}+\phi'\bar{\phi}') + (\phi_i\bar{\phi}_j+\phi_i'\bar{\phi}_j')\partial_{ij} f(\phi\bar{\phi}+\phi'\bar{\phi}') \bigg)
d\mu_\alpha(\phi)d\mu_\alpha(\phi')d\alpha.
\end{aligned}
\end{equation*}

Note that $\forall i\in [n]$,
\[\Delta G_{ii}(2\partial_i f(l)+l_i\partial_{ii} f(l))\geq 0,\]

by assumption. Also, for any $i\neq j$,
\[
\int_{\mathbb{C}^{2n}}\, \phi_i\bar{\phi}_j \frac{d G_{\alpha,ij}}{d\alpha} \partial_{ij}f(\phi\bar{\phi}  +\phi'\bar{\phi}')
d\mu_\alpha(\phi)d\mu_\alpha(\phi') =\int_{\mathbb{C}^{2n}}\, \phi_i\bar{\phi}_j\frac{d G_{\alpha,ij}}{d\alpha} \E\big[ \partial_{ij}f(\phi\bar{\phi}+\ell_\alpha')\big]d\mu_\alpha(\phi).
\]

Here $\ell_\alpha'$ is a 1-permanental process with kernel $G_\alpha$. Using Lemma \ref{lem:nonneg}, we conclude that the integral above has a fixed sign. This guarantees
\[\int_{\mathbb{C}^{2n}}\, \frac{d G_{\alpha,ij}}{d\alpha}(\phi_i\bar{\phi}_j+\phi_i'\bar{\phi}_j')\partial_{ij} f(\phi\bar{\phi}+\phi'\bar{\phi}')
d\mu_\alpha(\phi)d\mu_\alpha(\phi')\geq 0.\]

Hence $\E[f(\ell_\alpha+\ell_\alpha')]$ is increasing with $\alpha\in(0,1)$, and $\E[f(\ell_1+\ell_1')]\geq \E[f(\ell_0+\ell_0')]$. This verifies the case $k=2$. The general case where $k$ is any positive integer follows from almost the same arguments as above.

\subsection{Slepian's lemma for 1-permanental processes}

We now turn to prove an analog of Slepian's lemma for 1-permanental processes. To do so, we first prove a weaker version as below.

\vspace{.3em} \begin{lem}[Weak comparision of 1-permanental vectors]
\label{lem:weak_comp}
In the same setting as Lemma \ref{lem:kahane}, if, further, we have $G_{0,ii}=G_{1,ii}$ for all $i\in[n]$ and $G_{1,ij}\geq G_{0,ij}$ for all $i\neq j$, then
\[\sup_i \ell_{0,i}\succeq \sup_i\ell_{1,i}.\]
\end{lem}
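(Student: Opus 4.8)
The plan is to apply the Kahane-type inequality of \cref{lem:kahane} to a smooth product approximation of $\ind\{\sup_k l_k < t\}$, and then let the approximation converge. Fix $t>0$; for $\epsilon\in(0,t)$ let $\psi_\epsilon:\mathbb{R}_+\to[0,1]$ be a nonincreasing $C^\infty$ function with $\psi_\epsilon\equiv 1$ on $[0,t-\epsilon]$ and $\psi_\epsilon\equiv 0$ on $[t,\infty)$, and set $f_\epsilon(l):=\prod_{k=1}^n\psi_\epsilon(l_k)$. Then $f_\epsilon$ is bounded with bounded partial derivatives of every order, so in particular it has subexponential growth for $k\le 2$ orders of partial derivatives, as required by \cref{lem:kahane}.

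Next I would check the sign hypotheses. Since $G_{0,ii}=G_{1,ii}$ and $\tfrac{d}{d\alpha}G_{\alpha,ii}$ has constant sign (part of the ``same setting as \cref{lem:kahane}''), integrating over $\alpha\in[0,1]$ forces $\tfrac{d}{d\alpha}G_{\alpha,ii}\equiv 0$ for every $i$. For $i\ne j$, the assumption $G_{1,ij}\ge G_{0,ij}$ together with the constant sign of $\tfrac{d}{d\alpha}G_{\alpha,ij}$ gives $\tfrac{d}{d\alpha}G_{\alpha,ij}\ge 0$, while $\partial_{ij}f_\epsilon(l)=\psi_\epsilon'(l_i)\,\psi_\epsilon'(l_j)\prod_{k\ne i,j}\psi_\epsilon(l_k)\ge 0$ because $\psi_\epsilon'\le 0$; hence the off-diagonal condition of \cref{lem:kahane} holds. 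The main subtlety, and the step I expect to be the actual obstacle, is that $f_\epsilon$ does \emph{not} satisfy the diagonal condition of \cref{lem:kahane} (one checks that $\partial_i f_\epsilon+l_i\,\partial_{ii}f_\epsilon$ cannot be made one-signed, since $l\mapsto l\psi_\epsilon'(l)$ starts at $0$, dips negative, and returns to $0$). This is harmless, but it forces one to reopen the proof of \cref{lem:kahane} rather than quote its statement: because $\tfrac{d}{d\alpha}G_{\alpha,ii}\equiv 0$, the entire diagonal contribution to $\tfrac{d}{d\alpha}\E[f_\epsilon(\ell_\alpha)]$ vanishes, and each off-diagonal contribution $\tfrac{d}{d\alpha}G_{\alpha,ij}\int_{\mathbb{C}^n}\phi_i\bar\phi_j\,\partial_{ij}f_\epsilon(\phi\bar\phi)\,d\mu_\alpha(\phi)$ is nonnegative by \cref{lem:nonneg} (using that $G_\alpha$ is an inverse $M$-matrix). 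Hence $\alpha\mapsto\E[f_\epsilon(\ell_\alpha)]$ is nondecreasing, so $\E[f_\epsilon(\ell_1)]\ge\E[f_\epsilon(\ell_0)]$.

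Finally I would pass to the limit $\epsilon\downarrow 0$. For each $l\in\mathbb{R}_+^n$ we have $f_\epsilon(l)\to\ind\{\sup_k l_k<t\}$, and since $0\le f_\epsilon\le 1$ while $\ell_0,\ell_1$ are absolutely continuous (their laws have the density of \cref{lem:density}), bounded convergence yields $\E[f_\epsilon(\ell_i)]\to\P(\sup_k\ell_{i,k}<t)=\P(\sup_k\ell_{i,k}\le t)$ for $i=0,1$, the last equality because $\P(\sup_k\ell_{i,k}=t)=0$. Therefore $\P(\sup_k\ell_{1,k}\le t)\ge\P(\sup_k\ell_{0,k}\le t)$ for every $t\ge 0$, equivalently $\P(\sup_k\ell_{0,k}\ge t)\ge\P(\sup_k\ell_{1,k}\ge t)$, which is precisely the stochastic domination $\sup_i\ell_{0,i}\succeq\sup_i\ell_{1,i}$.
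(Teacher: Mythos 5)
Your proof is correct and follows essentially the same route as the paper's: approximate the indicator of a lower orthant by a smooth, coordinatewise-decreasing product test function, apply the Kahane-type comparison, and pass to the limit by bounded convergence (the paper uses a vector of thresholds $s_1,\dots,s_n$ and an explicit quintic cutoff, but your single threshold $t$ suffices for comparing the suprema). Your observation that the test function does not literally satisfy the diagonal hypothesis of \cref{lem:kahane}, so that one must instead note that the diagonal contribution to $\frac{d}{d\alpha}\E[f_\epsilon(\ell_\alpha)]$ vanishes because $\frac{d}{d\alpha}G_{\alpha,ii}\equiv 0$, is a genuine point that the paper's own proof glosses over, and your patch is the right one.
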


\begin{proof}
Given $\epsilon>0$, consider the function $g_\epsilon:\mathbb{R}\rightarrow [0,1]$ as follows,
\[g_\epsilon(x):=\begin{cases}
1& \text{ if }x< 0,\\
1-6\epsilon^{-5}x^5+15\epsilon^{-4}x^4-10\epsilon^{-3}x^3& \text{ if }x\in[0,\epsilon),\\
0& \text{ if }x\geq \epsilon.
\end{cases}\]

We know that $g_\epsilon$ is decreasing and twice differentiable. Given $s_1, \cdots, s_n\geq 0$, define $f_\epsilon(l_1,..., l_n) :=\prod_{i=1}^n g_\epsilon(l_i-s_i)$. Note (even though we do not use this property) that for any $i\in[n]$, we have $\partial_i f(l)\leq 0$. And for any $i\neq j$, $\partial_{ij}f(l)\geq 0$. Hence, by Lemma \ref{lem:kahane}, we have
\[\E[f_\epsilon(\ell_1)]\geq \E[f_\epsilon(\ell_0)].\]

Taking the limit as $\epsilon\downarrow 0$ on both sides, we have
\[\lim_{\epsilon\downarrow 0}\E[f_\epsilon(\ell_1)]\geq \lim_{\epsilon\downarrow 0}\E[f_\epsilon(\ell_0)].\]

By the dominated convergence theorem, since $\E[|f_\epsilon(\ell_1)|]\leq \E[1]=1$, we can
%change
interchange the integral with the limit and get
\[\E[\lim_{\epsilon\downarrow 0}f_\epsilon(\ell_1)]\geq \E[\lim_{\epsilon\downarrow 0}f_\epsilon(\ell_0)].\]

But $\lim_{\epsilon\downarrow 0}f_\epsilon(l)=\prod_{i=1}^n\ind_{(-\infty, s_i]}(l_i)$. We conclude that
\[\P(\ell_{1,i}\leq s_i, \forall i\in[n])\geq \P(\ell_{0,i}\leq s_i, \forall i\in[n]).\]

Since this is true for any $\{s_i\geq 0 : i\in[n]\}$, we conclude that $\sup_i \ell_{0,i}\succeq \sup_i\ell_{1,i}$.
\end{proof}

Boosting the above lemma using a scaling trick, we have the following Slepian lemma for 1-permanental vectors.

\vspace{.3em} \begin{thm}[Slepian lemma for 1-permanental vectors]
\label{thm:slepian}
In the same setting as Lemma \ref{lem:kahane}, if, further, we have $G_{1,ii}\leq G_{0,ii}$ for all $i\in[n]$ and $G_{1,ij} \geq G_{0,ij}$ for all $i\neq j$, then
\[\sup_i \ell_{0,i}\succeq \sup_i\ell_{1,i}.\]
\end{thm}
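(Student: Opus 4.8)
The plan is to reduce the theorem to \cref{lem:weak_comp} by a diagonal rescaling of $\ell_1$ that makes its kernel share the diagonal of $G_0$, at the cost of only enlarging $\sup_i\ell_{1,i}$. Since every inverse M-matrix has strictly positive diagonal entries (from $AG=I$ with $A$ a nonsingular M-matrix one gets $A_{ii}G_{ii}\ge 1$ with $A_{ii}>0$), the hypothesis $0<G_{1,ii}\le G_{0,ii}$ lets me set $c_i:=G_{0,ii}/G_{1,ii}\ge 1$ and $C:=\diag(c_1,\dots,c_n)$. First I would check that $\hat\ell_1:=(c_i\ell_{1,i})_{i\in[n]}$ is again $1$-permanental: its Laplace transform is $|I+\Lambda CG_1|^{-1}=|I+\Lambda\,C^{1/2}G_1C^{1/2}|^{-1}$, where the second equality is diagonal equivalence of kernels, so $\hat\ell_1$ has kernel $\hat G_1:=C^{1/2}G_1C^{1/2}$. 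A positive diagonal congruence preserves both the inverse-M-matrix property (as $\hat G_1^{-1}=C^{-1/2}G_1^{-1}C^{-1/2}$ still has nonpositive off-diagonal entries and nonnegative inverse) and positive definiteness of the symmetric part; moreover $\hat G_{1,ii}=c_iG_{1,ii}=G_{0,ii}$ and, for $i\ne j$, $\hat G_{1,ij}=\sqrt{c_ic_j}\,G_{1,ij}\ge G_{1,ij}\ge G_{0,ij}$ (using $G_{0,ij}\ge 0$, valid since $G_0$ is an inverse M-matrix). Finally $c_i\ge 1$ gives $\hat\ell_{1,i}\ge\ell_{1,i}$ surely, hence $\sup_i\hat\ell_{1,i}\succeq\sup_i\ell_{1,i}$.

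Next I would transport the interpolation. Given the interpolation $\{G_\alpha\}_{\alpha\in[0,1]}$ from $G_0$ to $G_1$ supplied by the hypothesis, set $C_\alpha:=\diag\big(G_{0,ii}/G_{\alpha,ii}:i\in[n]\big)$ and $\hat G_\alpha:=C_\alpha^{1/2}G_\alpha C_\alpha^{1/2}$. Then $\hat G_0=G_0$, $\hat G_1$ equals the matrix above, each $\hat G_\alpha$ is an inverse M-matrix with positive definite symmetric part, and the diagonal $\hat G_{\alpha,ii}=G_{0,ii}$ is independent of $\alpha$, so $\frac{d}{d\alpha}\hat G_{\alpha,ii}=0$. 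For $i\ne j$ one has $\hat G_{\alpha,ij}=\sqrt{G_{0,ii}G_{0,jj}}\cdot G_{\alpha,ij}/\sqrt{G_{\alpha,ii}G_{\alpha,jj}}$; the constant-sign hypothesis together with $G_{1,ij}\ge G_{0,ij}$ forces $\alpha\mapsto G_{\alpha,ij}$ to be nonnegative and nondecreasing, while $G_{1,ii}\le G_{0,ii}$ forces $\alpha\mapsto G_{\alpha,ii}$ to be positive and nonincreasing, so the quotient is nondecreasing and $\frac{d}{d\alpha}\hat G_{\alpha,ij}\ge 0$. Hence $\{\hat G_\alpha\}$ is a legitimate interpolation in the sense of \cref{lem:kahane} with $\hat G_{\alpha,ii}$ constant in $\alpha$, so \cref{lem:weak_comp} applies to the pair $(G_0,\hat G_1)$ and yields $\sup_i\ell_{0,i}\succeq\sup_i\hat\ell_{1,i}$. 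Chaining the two stochastic dominations gives $\sup_i\ell_{0,i}\succeq\sup_i\hat\ell_{1,i}\succeq\sup_i\ell_{1,i}$, as claimed.

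The step I expect to be the main obstacle is verifying that the rescaled family $\{\hat G_\alpha\}$ genuinely satisfies all the hypotheses of \cref{lem:kahane} — in particular the monotonicity of $\alpha\mapsto\hat G_{\alpha,ij}$, which is precisely where the interplay between $G_{\alpha,ij}$ increasing and $G_{\alpha,ii}$ decreasing is used, and also that positive definiteness of the symmetric part and the inverse-M-matrix structure survive the $\alpha$-dependent diagonal congruence. The remaining ingredients (the Laplace-transform identity for $\hat\ell_1$ and the elementary preservation statements under a single positive diagonal congruence) are routine.
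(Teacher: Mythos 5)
Your proof is correct and is essentially the paper's argument: a diagonal rescaling that equalizes the diagonals of the two kernels, followed by an appeal to \cref{lem:weak_comp}; the only real difference is that you scale $\ell_1$ up by $c_i=G_{0,ii}/G_{1,ii}\ge 1$ so that the rescaled vector dominates $\ell_1$, while the paper scales $\ell_0$ down by the reciprocal factors and then uses $c_i\le 1$ at the end. Your version is in fact slightly more complete, since you explicitly build the rescaled interpolation $\hat G_\alpha=C_\alpha^{1/2}G_\alpha C_\alpha^{1/2}$ and check that it satisfies the hypotheses of \cref{lem:kahane} (constant diagonal, monotone off-diagonals, preservation of the inverse-M-matrix property and of the positive definite symmetric part under positive diagonal congruence), a verification the paper's invocation of \cref{lem:weak_comp} leaves implicit.
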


\begin{proof}
To see this, we scale the 1-permanental vector $\ell_0$ by $c=(c_1,\cdots, c_n)$ where $c_i:=\frac{G_{1,ii}}{G_{0,ii}}\leq 1$(we use the convention that $\frac{0}{0}=1$). We claim that the new vector $c\odot\ell_0$, where $\odot$ is the standard Hadamard product between vectors, is also a 1-permanental vector, since for any $\lambda\in\mathbb{R}_+^n$, we have
\[\E[e^{-\langle \lambda, c\odot \ell_0\rangle}]=\E[e^{-\langle \lambda\odot c, \ell_0\rangle}]=|I+\Lambda C G_0|^{-1}.\]

Here, we defined $C:=\diag(c)$ and used the moment generating function of $\ell_0$. Then by definition, $c\odot\ell_0$ is a 1-permanental vector with kernel $\tilde{G}:=CG_0$. Note that $\forall i\in[n]$, $\tilde{G}_{ii}=c_iG_{0,ii}=G_{1,ii}$; and $\forall i\neq j$, $\tilde{G}_{ij}=c_iG_{0,ij}\leq G_{0,ij}\leq G_{1,ij}$. Therefore, we can use Lemma \ref{lem:weak_comp} to conclude that $\sup_i c_i\ell_{0,i}\succeq \sup_i \ell_{1,i}$. Here for two scalar random variables $X,Y$, $X$ (first-order) stochastically dominates $Y$, i.e., $X\succeq Y$ iff $\forall x\in \mathbb{R}$, $\P(X\geq x)\geq \P(Y\geq x)$. Let $c_*:=\sup_i c_i=\sup_i \frac{G_{1,ii}}{G_{0,ii}}$, then we conclude that
\[\sup_i \ell_{0,i}\succeq c_*\sup_i \ell_{0,i}\succeq\sup_i c_i\ell_{0,i}\succeq \sup_i \ell_{1,i}.\]
\end{proof}

\section{Symmetrization bound on cover time}

%In this section, we apply the density formulas with the generalized second Ray-Knight isomorphism theorem for non-reversible Markov chains to study the local time process at inverse local time. Our main result will be a symmetrization bound on the cover time of non-reversible chains. An important property of this upper bound is that it coincides with that for reversible graph random walks given in \cite{dlp11}, which is tight up to constant factors in the reversible case.
Our main result in this section will be a symmetrization bound on the cover time of a non-reversible Markov chain on a finite set. An important property of this upper bound is that it coincides with that for a random walk on a graph, i.e. a reversible Markov chain on a finite set, given in \cite{dlp11}, which is tight up to constant factors in the reversible case. To arrive at our result we apply the density formula, with the generalized second Ray-Knight isomorphism theorem for non-reversible Markov chains, to study the local time process at inverse local time.

Let $P=(P_{ij}:\, i,j\in[n])$ be an irreducible transition probability matrix on the state space $[n]$, with stationary distribution $\pi=(\pi_i:\, i\in[n])$. Recall that the normalized Laplacian is $Q=\Pi(P-I)$, where $\Pi$ denotes the diagonal matrix with diagonal entries given by the $\pi_i$. Let $\{ X_t, t \ge 0\}$ denote the continuous time Markov chain with rate matrix $Q$. Note that the cover time being discussed is defined in terms of the underlying discrete-time Markov chain with transition probability matrix $P$, as is conventional in the literature on cover times.

We will decompose $Q$ as in \cref{eqn:decomp}. The Green's kernel associated with killing upon hitting node 1 is
\begin{equation}    \label{eq:definetildeG}
\tilde{G}=(-Q_{**})^{-1}.
\end{equation}
Recall that the local time process is defined as in
\cref{eqn:localtime}.
%\[\Ell_t^i := \int_0^t \ind\{X_s=i\}ds.\]
For a fixed state $a \in [n]$, the inverse local time for any $r\geq 0$ is $\inv(r):=\inf\{t : \Ell_t^a>r\}$. We may as well renumber the states so that $a =1$. We will then consider the permanental process with kernel $G$ defined by
\[G:=\begin{pmatrix}
1 & \allone^T\\ \allone & \tilde{G}+\allone\allone^T
\end{pmatrix}.\]

One can think of it as the Green's kernel associated with killing once the process $\{X_t: t\geq 0\}$ has spent an exponentially distributed random time with mean $1$
%$Exp(1)$
at node 1 \cite{ek09}.
%Or
Equivalently, $G=(e_1e_1^T-Q)^{-1}$ in light of \cref{thm:rayknight}. Let $\ell$ be a 1-permanental process with kernel $G$. Then from \cref{thm:rayknight} we know
\[\{\Ell_{\inv(r)}+\ell\,|\,\ell_1=0\}\,\stackrel{d}{=}\,\{\ell\,|\,\ell_1=r\}.\]

Note that $\{\ell\,|\,\ell_1=0\}$ is distributed as a permanental vector with kernel $\tilde{G}$, so we have a density formula for it. For $\{\ell\,|\,\ell_1=r\}$, we will use Lemma \ref{lem:conditioned} to bound its density.

We first derive a tail bound for $\inv (t)$, which will be useful when bounding the cover time.

\vspace{.3em} \begin{lem}[Tail bound lemma]
\label{lem:tail}
For an irreducible Markov chain on state space $[n]$, let $S_{**}=\frac{1}{2}(Q_{**}+Q_{**}^T)$ and denote by $\sigma^2$ the largest diagonal term of $\frac{1}{2}(-S_{**})^{-1}$. Then for any $\lambda>0$, we have
\[\P\bigg(\bigg(\sum_{i=1}^n \pi_i\Ell_{\inv(r)}^i\bigg)^{\frac{1}{2}}-\sqrt{r}\geq \sqrt{\lambda}\sigma\bigg)\leq 5\gamma e^{-\lambda/8}.\]
Here
\begin{equation}    \label{eq:definegamma}
\gamma:=\frac{|2\tilde{G}|}{|\tilde{G}+\tilde{G}^T|}.
\end{equation}
\end{lem}

\begin{proof}
First, in light of the proof of \cref{thm:rayknight}, we have

\[\rho_r(l_2,\cdots, l_n) =  \int_{(0,2\pi)^{n-1}}\frac{1}{(2\pi)^{n-1}|\tilde{G}|}\,
\exp\Big(\langle \phi_*- \sqrt{r}\allone, Q_{**}(\bar{\phi}_*- \sqrt{r}\allone)\rangle\Big)d\theta_*.\]

Also, we have
\[\bigg\{\sum_{i=1}^n \pi_i\Ell_{\inv(r)}^i+\sum_{i=1}^n\pi_i\ell_i\,\bigg|\,\ell_1=0\bigg\}
\stackrel{d.}{=}
\bigg\{\sum_{i=1}^n\pi_i\ell_i\,\bigg|\,\ell_1=r\bigg\}.\]

Hence we conclude that $\{\sum_{i=1}^n\pi_i\ell_i\,|\,\ell_1=r\}\succeq\sum_{i=1}^n \pi_i\Ell_{\inv(r)}^i$. Let $\mathcal{R}:=\{l:\, \sum_{i=1}^n \pi_il_i\geq c\}$. Then we have the following bound:
\begin{equation*}
\begin{aligned}
\P\bigg(\sum_{i=1}^n \pi_i\Ell_{\inv(r)}^i\geq c\bigg) & \leq \int_\mathcal{R} \rho_r(l_2,\cdots,l_n)dl_*\\
& = \int_\mathcal{R} \int_{(0,2\pi)^{n-1}}\frac{1}{(2\pi)^{n-1}|\tilde{G}|}\,
\exp\Big(\langle \phi_*- \sqrt{r}\allone, Q_{**}(\bar{\phi}_*- \sqrt{r}\allone)\rangle\Big)d\theta_* dl_*\\
& \leq \int_\mathcal{R} \int_{(0,2\pi)^{n-1}}\frac{1}{(2\pi)^{n-1}|\tilde{G}|}\,
\exp\Big(\langle \phi_*- \sqrt{r}\allone, S_{**}(\bar{\phi}_*- \sqrt{r}\allone)\rangle\Big)d\theta_* dl_*\\
& = \gamma \int_\mathcal{R} \int_{(0,2\pi)^{n-1}}\frac{|-S_{**}|}{(2\pi)^{n-1}}\,
\exp\Big(\langle \phi_*- \sqrt{r}\allone, S_{**}(\bar{\phi}_*- \sqrt{r}\allone)\rangle\Big)d\theta_* dl_*.
\end{aligned}
\end{equation*}

Note that the integrand is now the density of $Z+{{\sqrt{r}\allone}\choose{0}}$, where $Z=\eta+\compi \xi$ is a complex Gaussian random vector with $\eta,\xi$ being i.i.d. $\mathcal{N}(0,\frac{1}{2}(-S_{**})^{-1})$ random variables. Therefore,
\begin{equation*}
\begin{aligned}
\P\bigg(\sum_{i=1}^n \pi_i\Ell_{\inv(r)}^i\geq c\bigg)
& \leq \gamma\P\big((\eta+\sqrt{r}\allone)^2+\xi^2)\in \mathcal{R}\big)\\
& = \gamma \P\big((\eta^2+\xi^2+2\sqrt{r}\eta+r\allone)\in \mathcal{R}\big)\\
& = \gamma \P\bigg(\sum_{i=1}^n \pi_i(\eta_i^2+\xi^2+2\sqrt{r}\eta_i+r)\geq c\bigg).
\end{aligned}
\end{equation*}

From Claim 2.2 of \cite{ding14}, we know for any $\lambda>0$, $\P(\sum_{i=1}^n \pi_i\eta_i^2\geq \lambda \sigma^2)\leq 2 e^{-\lambda/4}$, where $\sigma^2=\max_i\E[\eta_i^2]$. The same is true for $\sum_{i=1}^n \pi_i\xi_i^2$. We also know that $\sum_{i=1}^n \pi_i\eta_i\prec\mathcal{N}(0,\sigma^2)$, hence $\P(\sum_i\pi_i\eta_i\geq \sqrt{\lambda}\sigma)\leq e^{-\lambda/4}$. Therefore,
\begin{equation*}
\begin{aligned}
\P\bigg(\sum_i&\pi_i\eta_i^2 +\sum_i\pi_i\xi^2+2\sqrt{r}\sum_i\pi_i\eta_i+r\geq \lambda  \sigma^2+2\sqrt{\lambda r} \sigma+r\bigg)\\
&\leq \P\bigg(\sum_i\pi_i\eta_i^2\geq \frac{1}{2}\lambda   \sigma^2\bigg)+ \P\bigg(\sum_i\pi_i\xi^2\geq \frac{1}{2}\lambda  \sigma^2\bigg)+
\P\bigg(\sum_i \pi_i\eta_i\geq \sqrt{\lambda}\sigma \bigg)\\
&\leq 4e^{-\lambda/8}+e^{-\lambda/4}\leq 5e^{-\lambda/8}.
\end{aligned}
\end{equation*}

Hence $\P\big(\sum_{i=1}^n \pi_i\Ell_{\inv(r)}^i\geq (\sqrt{r}+\sqrt{\lambda }\sigma)^2\big)\leq 5\gamma e^{-\lambda/8}$.
\end{proof}

Let us denote the random cover time as $\tau_\cov:=\inf\{\sum_i\pi_i\Ell_s^i:\, s\geq 0,\,\Ell_s^i>0,\forall\,i\in[n]\}$\footnote{Since we are working with a variable jump rate Markov chain in this section, we need $\pi_i$-weighted summation to recover the cover time for a discrete Markov chain.}. The cover time $t_\cov$ is just the maximum expectation of $\tau_\cov$ maximized over all starting states $X_0\in[n]$. We denote the random cover-and-return time as
\[\tau_\cov^+:=\inf\bigg\{\sum_i\pi_i\Ell_s^i:\, s\geq 0,\, X_s=X_0,\,\Ell_s^i>0,\forall\,i\in[n]\bigg\}.\]

Similarly, we can define the cover-and-return time $t_\cov^+$. We remark that using the Markov property, one can easily see $t_\cov\leq \E(\tau_{cov}^+)\leq 2t_\cov$ for any irreducible finite-state chains, with any starting state. See \cite{dlp11} for a short proof.\footnote{They only considered symmetric chains, but their proof holds for any non-symmetric chain as well.} Given $j\in[n]$, the random hitting time, $\tau_{\textnormal{hit}}(j)$, is defined as
\[\tau_{\textnormal{hit}}(j):=\inf\bigg\{\sum_i\pi_i\Ell_s^i:\, s\geq 0,\, X_s=j\bigg\}.\]

Similarly, define the hitting time as $t_{\textnormal{hit}}=\sup_{i,j\in[n]}\E[\tau_{\textnormal{hit}}(j)|X_0=i]$. Let us also denote by $t_\cov'$ and $t_{\textnormal{hit}}'$ the cover time and the hitting time of the symmetrized Markov chain with kernel $\frac{Q+Q^T}{2}$, respectively.

We now prove the following symmetrization upper bound on the cover time.

\vspace{.3em} \begin{thm}[Symmetrization upper bound on cover time]
\label{thm:sym-cover}
For an irreducible Markov chain on state space $[n]$ with generator $Q$, we decompose $Q$ as in \cref{eqn:decomp}. Let $S$ be defined in terms of $Q$ as in equation \eqref{eq:defineS}, and decompose $S$ similarly to $Q$. We
%denote
write $M:=\E\sup_i\eta_i$ for $\eta\sim\mathcal{N}(0,\frac{1}{2}(-S_{**})^{-1}))$, and let $\sigma^2$ be the maximal diagonal term in $\frac{1}{2}(-S_{**})^{-1})$. Then we have the following upper bound on the cover time
\[t_\cov=O(M^2+\sigma^2\log\gamma)=O(t_\cov'\log\gamma),\]
where $\gamma$ is defined in equation \eqref{eq:definegamma}.
\end{thm}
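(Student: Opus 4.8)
The plan is to follow the loop-soup/Ray--Knight strategy of \cite{dlp11}, using the non-reversible isomorphism \cref{thm:rayknight} together with the density and symmetrization estimates developed above. Renumber the states so $a=1$ and decompose $Q$ as in \cref{eqn:decomp}. First I would reduce $t_\cov$ to the cover-and-return time from $1$: for any $u$, a cover-and-return excursion from $1$ must hit $u$ and come back, so $\E_u[\tau_{\textnormal{hit}}(1)]\le\E_1[\tau_\cov^+]$, whence $\E_u[\tau_\cov]\le\E_u[\tau_{\textnormal{hit}}(1)]+\E_1[\tau_\cov^+]\le 2\,\E_1[\tau_\cov^+]$, i.e.\ $t_\cov\le 2\,\E_1[\tau_\cov^+]$. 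Next, fix $r>0$ and decompose the trajectory at the stopping times $\inv(kr)$, $k\ge 0$: at each $\inv(kr)$ the chain sits at state $1$, so by the strong Markov property the segments between consecutive $\inv((k-1)r)$ and $\inv(kr)$ are i.i.d.\ copies of a fresh run of ``$r$ units of local time at $1$''. Writing $W_k:=\sum_i\pi_i\Ell^i_{\inv(kr)}$ and letting $N$ be the first $k$ for which $[n]$ is covered by time $\inv(kr)$, we have $\tau_\cov^+\le W_N$, the increments $W_k-W_{k-1}$ are i.i.d.\ with mean $\E_1[\sum_i\pi_i\Ell^i_{\inv(r)}]$, and $N$ is a stopping time for the phase filtration. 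Hence, by Wald's identity, it suffices to produce $r^*=O(M^2+\sigma^2\log\gamma)$ such that \textbf{(i)} $\P_1(\text{not covered by }\inv(r^*))\le 1/4$ (so that iterating the restart estimate gives $\P(N>k)\le(1/4)^k$ and $\E N=O(1)$) and \textbf{(ii)} $\E_1[\sum_i\pi_i\Ell^i_{\inv(r^*)}]=O(M^2+\sigma^2\log\gamma)$.

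Estimate (ii) is obtained by integrating the tail bound of \cref{lem:tail}. Writing $W:=\sum_i\pi_i\Ell^i_{\inv(r)}$, \cref{lem:tail} gives $\P\bigl(W\ge(\sqrt r+\sqrt\lambda\,\sigma)^2\bigr)\le 5\gamma e^{-\lambda/8}$, so $\E W\le r+\int_0^\infty\min(1,5\gamma e^{-\lambda/8})\,d\bigl[(\sqrt r+\sqrt\lambda\,\sigma)^2\bigr]$; splitting the integral at $\lambda\sim\log\gamma$ (where $5\gamma e^{-\lambda/8}\simeq 1$) and using $\sigma=O(M)$ (justified below) and $r^*=O(M^2+\sigma^2\log\gamma)$, one gets $\E W=O(M^2+\sigma^2\log\gamma)$. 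The point is that the factor $\gamma$ multiplies only the exponentially decaying tail, so it enters the bound only through $\log\gamma$.

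Estimate (i) is the crux. By \cref{thm:rayknight} (in the form recorded just before \cref{lem:tail}), taking $\ell$ independent of the chain, the random vector $\Lambda:=(\Ell^i_{\inv(r)}+\ell_i)_{i\neq 1}$ has the law of $(\ell'_i)_{i\neq 1}$ conditioned on $\ell'_1=r$, with density $\rho_r$ as in \cref{lem:conditioned}, while $\{\ell\mid\ell_1=0\}$ is the $\tilde G$-permanental vector on $[n]\setminus\{1\}$, $\tilde G=(-Q_{**})^{-1}$. If some state $i^*\neq 1$ is not covered by $\inv(r)$ then $\Lambda_{i^*}=\ell_{i^*}$, hence $\min_{i\neq 1}\Lambda_i\le\max_{i\neq 1}\ell_i$; therefore, for any threshold $t>0$,
\[
\P_1(\text{not covered by }\inv(r))\ \le\ \P\bigl(\textstyle\min_{i\neq 1}\Lambda_i\le t^2\bigr)\ +\ \P\bigl(\textstyle\max_{i\neq 1}\ell_i> t^2\bigr).
\]
Now apply the symmetrization bounds $\rho_r\le\gamma\,\bar\rho_r$ and $\rho\le\gamma\,\bar\rho$ used in the proof of \cref{lem:tail} (via \cref{lem:density}), where $\bar\rho_r$ is the density of $\bigl((\eta_i+\sqrt r)^2+\xi_i^2\bigr)_{i\neq 1}$ and $\bar\rho$ that of $(\eta_i^2+\xi_i^2)_{i\neq 1}$, with $\eta,\xi$ i.i.d.\ $\mathcal N(0,\tfrac12(-S_{**})^{-1})$. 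Applying these to the (measurable) events $\{\min_i l_i\le t^2\}$ and $\{\max_i l_i>t^2\}$ yields $\P(\min_{i\neq1}\Lambda_i\le t^2)\le\gamma\,\P(\max_{i\neq1}|\eta_i|\ge\sqrt r-t)$ and $\P(\max_{i\neq1}\ell_i>t^2)\le 2\gamma\,\P(\max_{i\neq1}|\eta_i|>t/\sqrt2)$. Since $x\mapsto\max_i|x_i|$ has Lipschitz scale $\sigma=\max_i\sqrt{\mathrm{Var}(\eta_i)}$ and $\E\max_{i\neq1}|\eta_i|=O(M+\sigma)$, Gaussian concentration \cite{biskup17} makes both right-hand sides $\le 1/8$ once $t=\Theta(M+\sigma\sqrt{1+\log\gamma})$ and $\sqrt r=\Theta(t+M+\sigma\sqrt{1+\log\gamma})$; this gives $r^*=O(M^2+\sigma^2\log\gamma)$ with $\P_1(\text{covered by }\inv(r^*))\ge 3/4$. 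The essential trick is that bounding $\min_{i\neq1}\Lambda_i$ by $\max_{i\neq1}\ell_i$ — rather than running a naive union bound over possibly uncovered states — lets us use concentration of the \emph{maximum} of a Gaussian vector, which is what avoids an extra $\log n$.

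Putting the pieces together, Wald's identity gives $\E_1[\tau_\cov^+]\le\E N\cdot\E_1[W_1]=O(M^2+\sigma^2\log\gamma)$, hence $t_\cov=O(M^2+\sigma^2\log\gamma)$. For the second equality, apply the reversible cover-time estimate of \cite{dlp11} to the symmetrized chain, whose killed Green's kernel is $(-S_{**})^{-1}$, so that $\eta$ is $\tfrac1{\sqrt2}$ times that chain's Gaussian free field $\phi$; then $M^2=\Theta\bigl((\E\max\phi)^2\bigr)=\Theta(t_\cov')$, and $\sigma^2=\max_i\mathrm{Var}(\eta_i)=O(M^2)$ because $\E\max_i\eta_i\gtrsim\max_i\sqrt{\mathrm{Var}(\eta_i)}$ for a centered Gaussian vector (e.g.\ $\E\max_i\eta_i=\tfrac12\E[\max_i\eta_i-\min_i\eta_i]\gtrsim\E|\eta_{i^*}|$). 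Hence $M^2+\sigma^2\log\gamma=O(t_\cov'\log\gamma)$. The main obstacle throughout is step (i): one must extract from the non-reversible Ray--Knight isomorphism a bound on the probability of failing to cover that is simultaneously free of a $\log n$ loss and only logarithmically sensitive to the asymmetry parameter $\gamma$, and the $\min$-versus-$\max$ comparison together with the density symmetrization and Gaussian concentration is exactly what delivers this.
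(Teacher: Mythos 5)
Your proposal is correct in all of its essential ingredients, and these are the same ones the paper uses: the non-reversible Ray--Knight isomorphism \cref{thm:rayknight} in the form $\{\Ell_{\inv(r)}+\ell\,|\,\ell_1=0\}\stackrel{d}{=}\{\ell\,|\,\ell_1=r\}$, the comparison $\min_{i\neq 1}\Lambda_i\leq\max_{i\neq 1}\ell_i$ on the non-covering event (which is exactly the paper's step of bounding $\{\sup_i\ell_i\,|\,\ell_1=0\}$ above and $\{\inf_i\ell_i\,|\,\ell_1=t\}$ below), the symmetrization $\rho_r\leq\gamma\bar{\rho}_r$ from \cref{lem:density} and \cref{lem:conditioned}, Gaussian concentration of the supremum, and the tail bound of \cref{lem:tail} for $\sum_i\pi_i\Ell^i_{\inv(r)}$. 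Where you diverge is the final assembly. The paper lets the local-time level $t=9(M+\sqrt{\lambda}\sigma)^2$ vary with $\lambda$, obtains the uniform tail bound $\P(\tau_\cov^+\geq(3M+4\sqrt{\lambda}\sigma)^2)\leq 10\gamma e^{-\lambda/8}$ for every $\lambda>0$, and integrates this tail directly; no renewal structure is needed. You instead fix a single level $r^*$, establish constant-probability coverage by $\inv(r^*)$, and run a renewal decomposition at the times $\inv(kr^*)$ with a geometric bound on $N$ and Wald's identity. Your route is sound (the segments are i.i.d.\ by the strong Markov property at state $1$, failure to cover by $\inv(kr^*)$ forces every individual segment to fail, and $N$ is a stopping time for the segment filtration), but it carries an extra layer that the paper's direct tail integration avoids; conversely, the paper's route yields the stronger exponential tail on $\tau_\cov^+$ as a byproduct, not just its expectation.

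One soft spot worth fixing: your justification of $\sigma=O(M)$ via $\E[\max_i\eta_i-\min_i\eta_i]\gtrsim\E|\eta_{i^*}|$ is not literally correct, since $\max_i\eta_i-\min_i\eta_i\geq|\eta_{i^*}-\eta_j|$ rather than $|\eta_{i^*}|$, and the former can be small under strong positive correlation. The correct argument uses the pinning of the field at the killed state $1$: the Green's kernel $\frac{1}{2}(-S_{**})^{-1}$ corresponds to a field that vanishes at state $1$, so $\sup$ over the extended index set dominates $\eta_{i^*}^+$ and hence $\E\sup\gtrsim\sigma$. (The paper itself leans on the same fact implicitly when asserting $t_\cov'=\Theta(M^2+\sigma^2)$ and when absorbing the leftover $O(\sigma^2)$ term into $O(M^2+\sigma^2\log\gamma)$, so this is a shared gap rather than a defect unique to your write-up.) With that repaired, your proof goes through.
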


\begin{proof}
To give an upper bound on the cover time, we need to
\begin{itemize}
    \item bound $\{\sup_i\ell_i\,|\,\ell_1=0\}$ from above asymptotically;
    \item bound $\{\inf_i\ell_i\,|\,\ell_1=t\}$ from below asymptotically.
\end{itemize}

Let $\eta, \xi$ denote independent $\mathcal{N}(0,\frac{1}{2}(-S_{**})^{-1})$ random vectors. Then by standard results about Gaussian suprema (see, for example, Theorem 5.8 of \cite{blm13}), we know that for all $\lambda > 0$ we have
\[\P\Big(\sup_i \eta_i>\E\sup_i\eta_i+\sqrt{\lambda}\sigma\Big)\leq e^{-\lambda/2}.\]

Since $X$ is symmetric, $-\inf_i \eta_i\stackrel{d.}{=}\sup_i \eta_i$. Hence we also have
\[\P\Big(-\inf_i \eta_i>\E\sup_i\eta_i+\sqrt{\lambda}\sigma\Big)\leq e^{-\lambda/2}.\]

By an application of the union bound, we deduce
\[\P\Big(\sup_i |\eta_i|>M+\sqrt{\lambda}\sigma\Big)=\P\Big(\sup_i \eta_i^2>\big(M+\sqrt{\lambda}\sigma\big)^2\Big)\leq 2e^{-\lambda/2}.\]

For any $c>0$, let $\mathcal{R}':=\{l:\, \sup_i l_i\geq c\}$. Then we have the following bound, where $\tilde{G}$ is defined as in equation \eqref{eq:definetildeG},
\begin{equation*}
\begin{aligned}
\P\bigg(\sup_i\ell_i \geq c\,\Big|\,\ell_1=0\Big) & = \int_{\mathcal{R}'} \rho_0(l_2,\cdots,l_n)dl_*\\
& = \int_{\mathcal{R}'} \int_{(0,2\pi)^{n-1}}\frac{1}{(2\pi)^{n-1}|\tilde{G}|}\,
\exp\big(\langle \phi_*, Q_{**}\bar{\phi}_*\rangle\big)d\theta_* dl_*\\
& \leq \gamma \int_{\mathcal{R}'} \int_{(0,2\pi)^{n-1}}\frac{|-S_{**}|}{(2\pi)^{n-1}}\,
\exp\big(\langle \phi_*, S_{**}\bar{\phi}_*\rangle\big)d\theta_* dl_*\\
& = \gamma\P\big((\eta^2+\xi^2)\in \mathcal{R}'\big).
\end{aligned}
\end{equation*}

Recall that here $\eta, \xi$ are independent $\mathcal{N}(0,\frac{1}{2}(-S_{**})^{-1})$ random vectors. This allows us to conclude that
\begin{equation*}
\begin{aligned}
\P\Big(\sup_i\ell_i\geq 2(M+& \sqrt{\lambda}\sigma)^2\,\Big|\,\ell_1=0\Big)\\
&\leq \gamma\P\Big(\sup_i(\eta_i^2+\xi_i^2)\geq 2(M+\sqrt{\lambda}\sigma)^2\Big)\\
&\leq \gamma\P\Big(\sup_i\eta_i^2+\sup_i\xi_i^2\geq 2(M+\sqrt{\lambda}\sigma)^2\Big)\\
&\leq \gamma\P\Big(\sup_i\eta_i^2\geq (M+\sqrt{\lambda}\sigma)^2\Big)+\gamma\P\Big(\sup_i\xi_i^2\geq (M+\sqrt{\lambda}\sigma)^2\Big)\\
&\leq 4\gamma e^{-\lambda/2}.
\end{aligned}
\end{equation*}

Similarly, note that for any $t>0$,
\begin{equation*}
\begin{aligned}
\P\Big(\inf_i\ell_i\leq -2\sqrt{t}M-& 2\sqrt{\lambda t}\sigma+t\,\Big|\,\ell_1=t\Big)\\
&\leq \gamma\P\Big(\inf_i\big((\eta_i+\sqrt{t})^2+\xi_i^2\big)\leq -2\sqrt{t}M-2\sqrt{\lambda t}\sigma+t\Big)\\
&\leq \gamma\P\Big(\inf_i(2\sqrt{t}\eta_i+t)\leq -2\sqrt{t}M-2\sqrt{\lambda t}\sigma+t\Big)\\
&= \gamma\P\Big(\inf_i\eta_i\leq -M-\sqrt{\lambda}\sigma\Big)\\
&\leq \gamma e^{-\lambda/2}.
\end{aligned}
\end{equation*}

Now we take $t=9(M+\sqrt{\lambda}\sigma)^2$, so that $-2\sqrt{t}M- 2\sqrt{\lambda t}\sigma+t=3(M+\sqrt{\lambda}\sigma)^2> 2(M+\sqrt{\lambda}\sigma)^2$. Therefore, with probability $\geq 1-5\gamma e^{-\lambda/2}$, we have $\tau_\cov^+ \leq \sum_i\pi_i\Ell_{\inv(t)}^i$ for such $t$. Also, by Lemma \ref{lem:tail}, with probability $\geq 1-5\gamma e^{-\lambda/8}$, we know that $\sum_i\pi_i\Ell_{\inv(t)}^i\leq (\sqrt{t}+\sqrt{\lambda}\sigma)^2=(3M+4\sqrt{\lambda}\sigma)^2$.

So in conclusion,

\[\P(\tau_\cov^+\geq (3M+4\sqrt{\lambda}\sigma)^2)\leq 10\gamma e^{-\lambda/8},\quad\forall \lambda>0.\]

Specifically, we have $\P(\tau_\cov^+\geq 2((3M)^2+\lambda (4\sigma)^2))\leq \P(\tau_\cov^+\geq (3M+4\sqrt{\lambda}\sigma)^2)\leq 10\gamma e^{-\lambda/8}$.

Since $\gamma\geq 1$, we know that when $\lambda \geq 16\log \gamma$, we have $\gamma e^{-\lambda/8}\leq (\gamma e^{-\lambda/16}) e^{-\lambda/16}\leq  e^{-\lambda/16}$. Therefore, we conclude that
\begin{equation*}
\begin{aligned}
\E\tau_\cov^+ & = \int_0^\infty \P(\tau_\cov^+\geq t)dt \\
& = \int_0^{18M^2} \P(\tau_\cov^+\geq t)dt + \int_0^\infty \P(\tau_\cov^+\geq 18M^2+32\lambda \sigma^2)\cdot 32\sigma^2 d\lambda\\
& \leq \int_0^{18M^2} 1\cdot dt + \int_0^{16\log\gamma} 1\cdot 32\sigma^2 d\lambda + \int_{16\log\gamma}^\infty \P(\tau_\cov^+\geq 18M^2+32\lambda \sigma^2)\cdot 32\sigma^2 d\lambda\\
& \leq 18M^2 + 16\log\gamma \cdot 32\sigma^2 + \int_{16\log\gamma}^\infty 10e^{-\lambda/16}\cdot 32\sigma^2 d\lambda\\
& \leq 18M^2 + 512\sigma^2\log\gamma + \int_0^\infty 10e^{-\lambda/16}\cdot 32\sigma^2 d\lambda\\
& = 18M^2 + 512\sigma^2\log\gamma+5120\sigma^2.
\end{aligned}
\end{equation*}

Therefore, we deduce that $\E\tau_\cov^+=O(M^2+\sigma^2\log\gamma)$. Since $t_\cov'=\Theta(M^2+\sigma^2)$, we obtain $t_\cov=\Theta(\E\tau_\cov^+)=O(t_\cov'\log\gamma)$.
\end{proof}

We remark that when $\gamma= 2^{O(M^2/\sigma^2)}$, i.e., when the chain does not deviate too much from symmetry, using the remarkable theorem of Ding-Lee-Peres\cite{dlp11}, we have $t_\cov=O(t_\cov')$. We also note that since non-reversibility reduces hitting time, i.e., $t_{\textnormal{hit}}\leq t_{\textnormal{hit}}'$(Corollary 3.1 of \cite{hm17}), then by Mathews' bound (see, e.g., Theorem 11.2 of \cite{lp17}), we know
\[t_\cov=O(t_\textnormal{hit}\log n)=O(t_\textnormal{hit}'\log n)=O(t_\cov'\log n).\]

However, this upper bound does not provide a smooth transition between the tight reversible bound and the non-reversible scenarios as \cref{thm:sym-cover}.

\section{Summary}

%In this paper, we applied the density formula approach to study several problems on the non-reversible finite-state Markov chains and the associated permanental processes.
In this paper, we used a density formula approach to study several problems on non-reversible finite-state Markov chains and the associated 1-permanental processes, as well as a larger class of related 1-permanental processes.
%After deriving a density formula for a broad family of permanental processes, we used it together with the non-reversible Ward identities to give unified proofs of several isomorphism theorems for non-reversible Markov chains in a unified way.
After deriving a density formula for this broad family of 1-permanental processes, we used it, together with non-reversible Ward identities, to give unified proofs of several isomorphism theorems for non-reversible Markov chains. We then applied the density formulas to derive Kahane-type and
%Slepian's
Slepian-type comparison inequalities for 1-permanental processes. Finally, we
%apply
applied the technique to bounding the cover time of non-reversible Markov chains. The derived bound becomes tight in the reversible case \cite{dlp11}.

The authors believe that the twisted Gaussian density is a good substitute for the Gaussian free field (GFF) in the non-reversible case (it indeed coincides with GFF when the Markov chain is reversible). In light of this, for many previous results about reversible Markov chains proved via the GFF, one should be able to give a natural generalization (perhaps suboptimal)
%to the
in non-reversible scenarios using the twisted Gaussian density instead.

Another intriguing research question is as follows. In the Markovian case, the coordinates of the 1-permanental vector corresponding to the kernel with psd symmetric part are the total loop soup local times where loops are sampled at rate 1. Is there a similar picture for the general kernel with
%a symmetric psd part in general?
a symmetric psd part?

\appendix
\section{Absolute integrability for Lemma \ref{lem:ward}.}
\label{apx:sec1}

We adopt the setting and notation of Lemma \ref{lem:ward}. Write \(\phi=u+\compi v\) with \(u,v\in\mathbb{R}^n\), and set
\[
\ell(u,v):=(u_1^2+v_1^2,\dots,u_n^2+v_n^2)\in\mathbb{R}_+^n,
\qquad
r(u,v):=\sum_{j=1}^n (u_j^2+v_j^2).
\]
Then \(\sum_{j=1}^n \phi_j\bar{\phi}_j=r(u,v)\) and \((2\pi\compi)^{-n}\,d\bar{\phi}\,d\phi=\pi^{-n}\,du\,dv\). Also, recalling that \(S:=\frac12(Q+Q^\top)\), see \eqref{eq:defineS}, and using that \(S\) is symmetric, we have
\[
\bigl|e^{\langle \phi,Q\bar{\phi}\rangle}\bigr|
= e^{\langle \phi,S\bar{\phi}\rangle} = e^{\langle u,Su\rangle+\langle v,Sv\rangle}.
\]

We first bound \(T_i g\). Property \(\mathfrak{G}\) implies that, for each \(i\in[n]\), the function \(T_i g\) has polynomial growth. Since \([n]\) is finite, there exists \(m>0\) such that for each \(i\in[n]\) there are constants \(a_i,b_i>0\) with \(|T_i g(\phi)|\le a_i\|\phi\|^{2m}\) whenever \(\|\phi\|>b_i\). Taking maxima over \(i\), and then enlarging the resulting constant to cover the compact set \(\{\phi\in\mathbb{C}^n:\|\phi\|\le \max_i b_i\}\), we obtain a constant \(C_g>0\) such that
\[
|T_i g(\phi)|\le C_g(1+\|\phi\|^{2m}),
\qquad \phi\in\mathbb{C}^n,\ i\in[n].
\]
Since \(\|\phi\|^2=r(u,v)\), this becomes
\[
|T_i g(u+\compi v)|\le C_g(1+r(u,v))^m,
\qquad u,v\in\mathbb{R}^n,\ i\in[n].
\]

Next we bound \(f\). Since \(f\) has property \(\mathfrak{F}\), applying the definition with exponent \(m+2\) shows that for each \(i\in[n]\) there are constants \(a_i',b_i'>0\) such that
\[
|f(i,\ell)|\le a_i'\|\ell\|^{-(m+2)}
\qquad\text{whenever }\|\ell\|>b_i'.
\]
Using again that \([n]\) is finite and that all norms on \(\mathbb{R}^n\) are equivalent, we find constants \(A,B>0\) such that
\[
|f(i,\ell)|\le A\Bigl(\sum_{j=1}^n \ell_j\Bigr)^{-(m+2)}
\qquad\text{whenever }\sum_{j=1}^n \ell_j>B,\ i\in[n].
\]
Enlarging \(A\) if necessary to cover the compact set \(\{\ell\in\mathbb{R}_+^n:\sum_j\ell_j\le B\}\), we obtain \(C_f>0\) such that
\[
|f(i,\ell)|\le C_f\Bigl(1+\sum_{j=1}^n \ell_j\Bigr)^{-(m+2)},
\qquad \ell\in\mathbb{R}_+^n,\ i\in[n].
\]

Now let
\[
I:=\int_0^\infty \sum_i \int_{\mathbb{C}^n}
\bigl|(T_i g)(\phi)\,f_t(i,\phi\bar{\phi})\bigr|\,
(2\pi\compi)^{-n}\bigl|e^{\langle\phi,Q\bar{\phi}\rangle}\bigr|
\,d\bar{\phi}\,d\phi\,dt.
\]
Passing to \((u,v)\)-coordinates, applying \(|\E Z|\le \E|Z|\), and using the two bounds above, we get
\[
\begin{aligned}
I
&=
\int_0^\infty \sum_i \int_{\mathbb{R}^{2n}}
\bigl|(T_i g)(u+\compi v)\bigr|\,
\bigl|\E_{i,\ell(u,v)}(f(X_t,\Ell_t))\bigr|\,
\pi^{-n}e^{\langle u,Su\rangle+\langle v,Sv\rangle}\,du\,dv\,dt \\
&\le
\int_0^\infty \sum_i \int_{\mathbb{R}^{2n}}
\bigl|(T_i g)(u+\compi v)\bigr|\,
\E_{i,\ell(u,v)}|f(X_t,\Ell_t)|\,
\pi^{-n}e^{\langle u,Su\rangle+\langle v,Sv\rangle}\,du\,dv\,dt \\
&\le
C_gC_f
\int_0^\infty \sum_i \int_{\mathbb{R}^{2n}}
(1+r(u,v))^m\,
\E_{i,\ell(u,v)}
\Bigl(1+\sum_{j=1}^n \Ell_t^j\Bigr)^{-(m+2)}
\pi^{-n}e^{\langle u,Su\rangle+\langle v,Sv\rangle}\,du\,dv\,dt.
\end{aligned}
\]
Since the total local time increases at unit rate, we have \(\sum_{j=1}^n \Ell_t^j=\sum_{j=1}^n \ell_j(u,v)+t=r(u,v)+t\). Hence
\[
I
\le
C_gC_f
\int_0^\infty \sum_i \int_{\mathbb{R}^{2n}}
(1+r(u,v))^m(1+r(u,v)+t)^{-(m+2)}
\pi^{-n}e^{\langle u,Su\rangle+\langle v,Sv\rangle}\,du\,dv\,dt.
\]
The integrand is nonnegative, so Tonelli's theorem applies. Therefore
\[
\begin{aligned}
I
&\le
C_gC_f \sum_i \int_{\mathbb{R}^{2n}} (1+r(u,v))^m
\left(\int_0^\infty (1+r(u,v)+t)^{-(m+2)}\,dt\right)
\pi^{-n}e^{\langle u,Su\rangle+\langle v,Sv\rangle}\,du\,dv \\
&=
\frac{C_gC_f}{m+1}
\sum_i \int_{\mathbb{R}^{2n}}
(1+r(u,v))^{-1}
\pi^{-n}e^{\langle u,Su\rangle+\langle v,Sv\rangle}\,du\,dv \\
&\le
\frac{C_gC_f}{m+1}
\sum_i \int_{\mathbb{R}^{2n}}
\pi^{-n}e^{\langle u,Su\rangle+\langle v,Sv\rangle}\,du\,dv .
\end{aligned}
\]
The last integral is finite by the same Gaussian integrability used in Lemma \ref{lem:ward}. Hence \(I<\infty\).

Thus the integrand is absolutely integrable, and Fubini's theorem allows us to exchange the relevant integrals and the summation.

\section{Interchange of Differentiation and Integration in Lemma \ref{lem:kahane}.}
\label{apx:kahane}

Recall that
\[
Q_\alpha:=-(G_\alpha)^{-1},
\qquad
d\mu_\alpha(\phi):=\frac{1}{(2\pi \compi)^n|G_\alpha|} e^{\langle \phi,Q_\alpha\bar\phi\rangle}\,d\bar\phi\,d\phi,
\]
see equations \eqref{eq:qalpha} and \eqref{eq:dmualpha}. For every $\alpha\in[0,1]$, let
\[
F(\alpha):=\E[f(\ell_\alpha)] =\int_{\mathbb C^n} f(\phi\bar\phi)\,d\mu_\alpha(\phi) =\int_{\mathbb C^n} H(\alpha,\phi)\,d\bar\phi\,d\phi,
\]
where
\[
H(\alpha,\phi):=\frac{1}{(2\pi \compi)^n|G_\alpha|} f(\phi\bar\phi)e^{\langle \phi,Q_\alpha\bar\phi\rangle}.
\]

Since $\alpha\mapsto G_\alpha$ is $C^1([0,1])$ entrywise, it is a $C^1$ map into $\mathbb R^{n\times n}$. The determinant map $A\mapsto \det(A)$ is polynomial in the entries of $A$, hence smooth, and the inversion map $A\mapsto A^{-1}$ is smooth on the open set of invertible matrices. Since each $G_\alpha$ is invertible, it follows that
\[
\alpha\mapsto |G_\alpha|^{-1},
\qquad
\alpha\mapsto G_\alpha^{-1},
\qquad
\alpha\mapsto Q_\alpha
\]
are all $C^1([0,1])$. Hence, for each fixed $\phi\in\mathbb C^n$, the map $\alpha\mapsto H(\alpha,\phi)$ is $C^1([0,1])$.

To obtain a uniformly integrable majorant, write
\[
A_\alpha:=-Q_\alpha=G_\alpha^{-1},
\qquad
S_\alpha:=\frac{A_\alpha+A_\alpha^T}{2}.
\]
By Lemma~\ref{lem:inv_psd}, $S_\alpha$ is positive definite for every $\alpha\in[0,1]$. Since $\alpha\mapsto A_\alpha$ is continuous, so is $\alpha\mapsto S_\alpha$. Since the smallest eigenvalue is continuous on the space of real symmetric matrices, the map
\[
\alpha\mapsto \lambda_{\min}(S_\alpha)
\]
is continuous on $[0,1]$. As each $S_\alpha$ is positive definite, $\lambda_{\min}(S_\alpha)>0$ for all $\alpha$, and compactness of $[0,1]$ yields
\[
c:=\min_{\alpha\in[0,1]}\lambda_{\min}(S_\alpha)>0.
\]

Now write
%$\phi=u+iv$
$\phi=u+ \compi v$ with $u,v\in\mathbb R^n$. Since $A_\alpha$ is real,
\[
\Re\langle \phi,A_\alpha\bar\phi\rangle
= u^T A_\alpha u+v^T A_\alpha v = u^T S_\alpha u+v^T S_\alpha v
\ge c(\|u\|^2+\|v\|^2)
= c\|\phi\|^2.
\]
Equivalently,
\[
\Re\langle \phi,Q_\alpha\bar\phi\rangle
\le -c\|\phi\|^2.
\]
Therefore,
\[
\big|e^{\langle \phi,Q_\alpha\bar\phi\rangle}\big|
= e^{\Re\langle \phi,Q_\alpha\bar\phi\rangle}
\le e^{-c\|\phi\|^2},
\qquad \forall \alpha\in[0,1],\ \phi\in\mathbb C^n.
\]

Next, because $\alpha\mapsto G_\alpha$, $\alpha\mapsto Q_\alpha$, and $\alpha\mapsto \frac{d}{d\alpha}G_\alpha$ are continuous on the compact interval $[0,1]$, there exist finite constants
\[
M_0:=\sup_{\alpha\in[0,1]} |G_\alpha|^{-1},\qquad M_1:=\sup_{\alpha\in[0,1]} \|Q_\alpha\|,\qquad M_2:=\sup_{\alpha\in[0,1]} \Big\|\frac{d}{d\alpha}G_\alpha\Big\|.
\]

Using Jacobi's formula (see, e.g., Section 0.8.2 of \cite{hj12}) and
\[
\frac{d}{d\alpha}Q_\alpha
= Q_\alpha\Big(\frac{d}{d\alpha}G_\alpha\Big)Q_\alpha,
\]
we get
\[
\frac{d}{d\alpha}|G_\alpha|^{-1}
= |G_\alpha|^{-1}\tr\Big(Q_\alpha\frac{d}{d\alpha}G_\alpha\Big),
\]
and
\[
\partial_\alpha e^{\langle \phi,Q_\alpha\bar\phi\rangle}
= e^{\langle \phi,Q_\alpha\bar\phi\rangle}
\Big\langle \phi,
Q_\alpha\Big(\frac{d}{d\alpha}G_\alpha\Big)Q_\alpha\bar\phi
\Big\rangle.
\]
Hence
\[
\partial_\alpha H(\alpha,\phi)
=
\frac{1}{(2\pi \compi)^n|G_\alpha|}
f(\phi\bar\phi)e^{\langle \phi,Q_\alpha\bar\phi\rangle}
\Bigg(
\tr\Big(Q_\alpha\frac{d}{d\alpha}G_\alpha\Big)
+
\Big\langle \phi,
Q_\alpha\Big(\frac{d}{d\alpha}G_\alpha\Big)Q_\alpha\bar\phi
\Big\rangle
\Bigg).
\]

Taking absolute values and using
\[
\Big|\tr\Big(Q_\alpha\frac{d}{d\alpha}G_\alpha\Big)\Big|
\le n\|Q_\alpha\|\,\Big\|\frac{d}{d\alpha}G_\alpha\Big\|,
\]
and
\[
\Big|
\Big\langle \phi,
Q_\alpha\Big(\frac{d}{d\alpha}G_\alpha\Big)Q_\alpha\bar\phi
\Big\rangle
\Big|
\le
\|Q_\alpha\|^2\Big\|\frac{d}{d\alpha}G_\alpha\Big\|\,\|\phi\|^2,
\]
we obtain
\[
|\partial_\alpha H(\alpha,\phi)|
\le
C\,|f(\phi\bar\phi)|(1+\|\phi\|^2)e^{-c\|\phi\|^2},
\qquad \forall \alpha\in[0,1],\ \phi\in\mathbb C^n,
\]
for some constant $C<\infty$ independent of $\alpha$ and $\phi$.

By the definition of subexponential growth,
\[
\lim_{\|l\|_2\to\infty}\frac{\log(1+|f(l)|)}{\|l\|_2}=0.
\]
Fix $\varepsilon>0$. Then there exists $R_\varepsilon<\infty$ such that whenever $\|l\|_2\ge R_\varepsilon$,
\[
\frac{\log(1+|f(l)|)}{\|l\|_2}\le \varepsilon,
\]
and hence
\[
|f(l)|\le e^{\varepsilon \|l\|_2},
\qquad \|l\|_2\ge R_\varepsilon.
\]
Since $f$ is continuous, it is bounded on the compact set $\{l\in\mathbb R_+^n:\|l\|_2\le R_\varepsilon\}$; let
\[
M_\varepsilon:=\sup_{\|l\|_2\le R_\varepsilon}|f(l)|<\infty.
\]
Therefore, for all $l\in\mathbb R_+^n$,
\[
|f(l)|\le C_\varepsilon e^{\varepsilon \|l\|_2},
\qquad
C_\varepsilon:=\max\{1,M_\varepsilon\}.
\]
Applying this with $l=\phi\bar\phi=(|\phi_1|^2,\dots,|\phi_n|^2)$, and using
\[
\|\phi\bar\phi\|_2
=
\Big(\sum_{j=1}^n |\phi_j|^4\Big)^{1/2}
\le
\sum_{j=1}^n |\phi_j|^2
=
\|\phi\|_2^2,
\]
we obtain
\[
|f(\phi\bar\phi)|\le C_\varepsilon e^{\varepsilon \|\phi\|_2^2}.
\]
Choose $\varepsilon>0$ so small that $\varepsilon<c/2$. Then
\[
|\partial_\alpha H(\alpha,\phi)|
\le
C\cdot C_\epsilon(1+\|\phi\|_2^2)e^{-(c-\varepsilon)\|\phi\|_2^2},
\qquad \forall \alpha\in[0,1],\ \phi\in\mathbb C^n.
\]
Since $c-\varepsilon>0$ and $\mathbb C^n\simeq\mathbb R^{2n}$, the right-hand side belongs to $L^1(\mathbb C^n)$, because any polynomial times a non-degenerate Gaussian density is integrable.

By the Leibniz integral rule for differentiation under the integral sign (see, e.g., Theorem 2.27 in \cite{fo99}), justified here by the dominated convergence theorem, the exchange of differentiation and integration in the proof of Lemma \ref{lem:kahane} is valid.

\section{Nonnegativity of a certain integral}
\label{apx:sec2}

Consider $G$, which is an inverse M-matrix with a positive definite symmetric part. Let $A:=G^{-1}$. Then $A$ also has a positive definite symmetric part, as proved in Lemma \ref{lem:inv_psd} in \cref{app:misc}. It follows that the complex measure $d\mu(\phi):=(2\pi\compi)^{-n}e^{-\langle \phi, A\bar{\phi}\rangle} d\bar{\phi}d\phi$ is integrable (see, e.g., \cref{eq:symmetrize}).
%Note: better write $d\mu(\phi,\bar{\phi})$.
%Also, why the constant $2 \pi \compi$ shows
%up is not clear.

\vspace{.3em} \begin{lem}
\label{lem:nonneg}
Let $G$ be an inverse M-matrix with a positive definite symmetric part. For any nonnegative function $f:\mathbb{R}_+^n\rightarrow \mathbb{R}_+$ with subexponential growth, the integral $\int_{\mathbb{C}^n}\, \phi_a\bar{\phi}_b f(\phi\bar{\phi}) d\mu(\phi)$ is finite and nonnegative for any $a,b\in [n]$.
\end{lem}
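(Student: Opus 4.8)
The plan is to evaluate the integral in polar coordinates, exactly as in the proof of \cref{lem:density}, and then to expand the resulting exponential as a power series; the nonnegativity will come from the sign pattern of $Q:=-A=-G^{-1}$, once the diagonal ("self-loop") terms have been resummed.

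Concretely, I would first substitute $\phi_i=\sqrt{l_i}\,e^{\compi\theta_i}$. As computed in the proof of \cref{lem:density} (using $d\bar\phi\wedge d\phi=\compi\,dl\wedge d\theta$), this change of variables turns $(2\pi\compi)^{-n}e^{-\langle\phi,A\bar\phi\rangle}d\bar\phi\,d\phi$ into $(2\pi)^{-n}e^{\sum_{i,j}Q_{ij}\sqrt{l_il_j}e^{\compi(\theta_i-\theta_j)}}d\theta\,dl$, while $\phi_a\bar\phi_b$ becomes $\sqrt{l_al_b}\,e^{\compi(\theta_a-\theta_b)}$. Hence
\[
\int_{\mathbb{C}^n}\phi_a\bar\phi_b\,f(\phi\bar\phi)\,d\mu(\phi)=\frac{1}{(2\pi)^n}\int_{\mathbb{R}_+^n}\sqrt{l_al_b}\,f(l)\,K^{ab}(l)\,dl,
\]
where
\[
K^{ab}(l):=\int_{(0,2\pi)^n}e^{\compi(\theta_a-\theta_b)}\,\exp\!\Big(\textstyle\sum_{i,j}Q_{ij}\sqrt{l_il_j}e^{\compi(\theta_i-\theta_j)}\Big)\,d\theta .
\]
It therefore suffices to show that $K^{ab}(l)\ge0$ for every $l\in\mathbb{R}_+^n$ and that the outer integral is finite; the latter also legitimizes the change of variables and the Fubini step below.

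For the nonnegativity of $K^{ab}(l)$ I would use that, since $G$ is an inverse $M$-matrix, $A=G^{-1}=-Q$ is an $M$-matrix, so $Q_{ij}\ge0$ for all $i\ne j$ while $Q_{ii}\le0$. First split off the diagonal terms: $K^{ab}(l)=\big(\prod_i e^{Q_{ii}l_i}\big)\,\widetilde K^{ab}(l)$, where $\prod_i e^{Q_{ii}l_i}>0$ and $\widetilde K^{ab}(l)=\int_{(0,2\pi)^n}e^{\compi(\theta_a-\theta_b)}\exp\!\big(\sum_{i\ne j}Q_{ij}\sqrt{l_il_j}e^{\compi(\theta_i-\theta_j)}\big)d\theta$. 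Now expand the exponential in $\widetilde K^{ab}$ as an absolutely convergent series, dominated uniformly in $\theta$ by $e^{\sum_{i\ne j}Q_{ij}\sqrt{l_il_j}}<\infty$ (so that summation and integration may be interchanged by Fubini); by the multinomial theorem, $\widetilde K^{ab}(l)$ is a sum, over families $(m_{ij})_{i\ne j}$ of nonnegative integers, of $\prod_{i\ne j}\frac{(Q_{ij}\sqrt{l_il_j})^{m_{ij}}}{m_{ij}!}$ times $\int_{(0,2\pi)^n}e^{\compi[(\theta_a-\theta_b)+\sum_{i\ne j}m_{ij}(\theta_i-\theta_j)]}d\theta$, and the last factor equals $(2\pi)^n$ when $\ind\{k=a\}-\ind\{k=b\}+\sum_j(m_{kj}-m_{jk})=0$ for every $k$, and $0$ otherwise. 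Since $Q_{ij}\ge0$ for $i\ne j$, every surviving term is nonnegative, so $\widetilde K^{ab}(l)\ge0$ and thus $K^{ab}(l)\ge0$ (the case $a=b$ being included verbatim, the extra ``edge'' then being an automatically balanced self-loop). I expect this resummation of the self-loops to be the one step that needs care: it is precisely because $Q_{ii}\le0$ whereas the off-diagonal entries are $\ge0$ that the naive term-by-term expansion mixes signs, and factoring out $\prod_i e^{Q_{ii}l_i}$ — the analytic shadow of the fact that the Green's kernel of a killed chain is entrywise nonnegative — is what repairs this.

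Finally, for finiteness I would reuse the symmetrization estimate \eqref{eq:symmetrize} from the proof of \cref{lem:density}: $|K^{ab}(l)|\le\int_{(0,2\pi)^n}e^{\sum_{i,j}S_{ij}\sqrt{l_il_j}\cos(\theta_i-\theta_j)}d\theta\le(2\pi)^n e^{-\lambda_{\min}(-S)\|l\|_1}$, where $S=\tfrac12(Q+Q^T)$ is negative definite because $G$, and hence $-Q$, has positive definite symmetric part. Since $f$ has subexponential growth, $\int_{\mathbb{R}_+^n}\sqrt{l_al_b}\,f(l)\,e^{-\lambda_{\min}(-S)\|l\|_1}dl<\infty$; this bounds $\big|\int_{\mathbb{C}^n}\phi_a\bar\phi_b f(\phi\bar\phi)\,d\mu(\phi)\big|$ and supplies the absolute integrability used above. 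Combined with $K^{ab}\ge0$, $\sqrt{l_al_b}\ge0$ and $f\ge0$, this shows the integral is finite and nonnegative.
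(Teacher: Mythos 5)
Your proposal is correct and follows essentially the same route as the paper's own proof: polar coordinates, factoring out the diagonal terms, multinomial expansion of the off-diagonal exponential with the angular integral selecting the ``balanced'' index families, nonnegativity from the M-matrix sign pattern $-A_{ij}\ge 0$ for $i\ne j$, and finiteness from the positive definite symmetric part giving exponential decay in $\|l\|_1$. The only cosmetic difference is that you phrase the finiteness bound via the symmetrization/spectral estimate of \cref{lem:density} rather than via $\langle\sqrt{l},A\sqrt{l}\rangle\ge c\|l\|_1$ directly, which is the same underlying fact.
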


\begin{proof}
To see that
    %it
the integral is nonnegative, note that
    \begin{align}   \label{eq:appceqn}
    \int_{\mathbb{C}^n}\, \phi_a\bar{\phi}_b f(\phi\bar{\phi}) d\mu(\phi) & = \frac{1}{(2\pi\compi)^n}\int_{\mathbb{C}^n}\, \phi_a\bar{\phi}_b f(\phi\bar{\phi}) e^{-\sum_{ij} A_{ij}\phi_i\bar{\phi}_j} d\bar{\phi}d\phi \nonumber\\
    & = \frac{1}{(2\pi)^n}\int_{\mathbb{R}_+^n}\int_{[0,2\pi)^n}\, \sqrt{l_a l_b}e^{\compi(\theta_a-\theta_b)} f(l) e^{-\sum_{ij} A_{ij}\sqrt{l_i l_j}e^{\compi(\theta_i-\theta_j)}} d\theta dl \nonumber\\
    & = \int_{\mathbb{R}_+^n}\sqrt{l_a l_b} f(l) e^{-\sum_{i} A_{ii}l_i}\bigg(\frac{1}{(2\pi)^n}\int_{[0,2\pi)^n}\, e^{\compi(\theta_a-\theta_b)}   e^{-\sum_{i\neq j} A_{ij}\sqrt{l_i l_j}e^{\compi(\theta_i-\theta_j)}} d\theta\bigg) dl,
    \end{align}
where we have used equation \eqref{eq:diffform} in writing the second equality.

Moreover, we have
    \begin{align*}
    e^{-\sum_{i\neq j} A_{ij}\sqrt{l_i l_j}e^{\compi(\theta_i-\theta_j)}} & = \prod_{i\neq j}e^{- A_{ij}\sqrt{l_i l_j}e^{\compi(\theta_i-\theta_j)}}\\
    & = \prod_{i\neq j}\sum_{k_{ij}\in\mathbb{N}} \frac{(- A_{ij}\sqrt{l_i l_j}e^{\compi(\theta_i-\theta_j)})^{k_{ij}}}{k_{ij}!}\\
    & = \sum_{\{k_{ij}\in\mathbb{N}, \,i\neq j\}}\prod_{i\neq j} \frac{(- A_{ij})^{k_{ij}}(l_i l_j)^{\frac{1}{2}k_{ij}}}{k_{ij}!}e^{\compi k_{ij}(\theta_i-\theta_j)}.
    \end{align*}

Fix some $l\in\mathbb{R}_+^n$. The inner integral in equation \eqref{eq:appceqn} becomes
    \begin{align*}
    \frac{1}{(2\pi)^n}\int_{[0,2\pi)^n}\, & e^{\compi(\theta_a-\theta_b)}   e^{-\sum_{i\neq j} A_{ij}\sqrt{l_i l_j}e^{\compi(\theta_i-\theta_j)}} d\theta \\
    &= \frac{1}{(2\pi)^n}\int_{[0,2\pi)^n}\, e^{\compi(\theta_a-\theta_b)}\bigg(\sum_{\{k_{ij}\in\mathbb{N}, \,i\neq j\}}\prod_{i\neq j} \frac{(- A_{ij})^{k_{ij}}(l_i l_j)^{\frac{1}{2}k_{ij}}}{k_{ij}!}e^{\compi k_{ij}(\theta_i-\theta_j)}\bigg) d\theta  \\
    &= \frac{1}{(2\pi)^n}\int_{[0,2\pi)^n}\, e^{\compi(\theta_a-\theta_b)}\bigg(\sum_{\{k_{ij}\in\mathbb{N}, \,i\neq j\}}\bigg(\prod_{i\neq j} \frac{(- A_{ij})^{k_{ij}}(l_i l_j)^{\frac{1}{2}k_{ij}}}{k_{ij}!}\bigg)e^{\sum_{i\neq j}\compi k_{ij}(\theta_i-\theta_j)}\bigg) d\theta  \\
    &= \sum_{\{k_{ij}\in\mathbb{N}, \,i\neq j\}}\bigg(\prod_{i\neq j} \frac{(- A_{ij})^{k_{ij}}(l_i l_j)^{\frac{1}{2}k_{ij}}}{k_{ij}!}\bigg)\cdot\bigg(\frac{1}{(2\pi)^n}\int_{[0,2\pi)^n}\, e^{\compi(\theta_a-\theta_b)}e^{\sum_{i\neq j}\compi k_{ij}(\theta_i-\theta_j)} d\theta\bigg).
    \end{align*}

Notice that $\frac{1}{(2\pi)^n}\int_{[0,2\pi)^n}\, e^{\compi(\theta_a-\theta_b)}e^{\sum_{i\neq j}\compi k_{ij}(\theta_i-\theta_j)} d\theta=\frac{1}{(2\pi)^n}\int_{[0,2\pi)^n}\, e^{\sum_{i\neq j}\compi r_i\theta_i} d\theta$ where $r_i:=\sum_{j\neq i} (k_{ij}- k_{ji})+\ind(i=a)-\ind(i=b)$. Since $\frac{1}{2\pi}\int_{[0,2\pi)}e^{\compi r_i\theta_i}d\theta_i=\ind(r_i=0)$, we know that $\frac{1}{(2\pi)^n}\int_{[0,2\pi)^n}\, e^{\compi(\theta_a-\theta_b)}e^{\sum_{i\neq j}\compi k_{ij}(\theta_i-\theta_j)} d\theta=\ind(r_i=0,\forall i\in[n])$. Therefore, we conclude that
    \begin{align*}
    \frac{1}{(2\pi)^n}\int_{[0,2\pi)^n}\, & e^{\compi(\theta_a-\theta_b)}   e^{-\sum_{i\neq j} A_{ij}\sqrt{l_i l_j}e^{\compi(\theta_i-\theta_j)}} d\theta \\
    &= \sum_{\{k_{ij}\in\mathbb{N}, \,i\neq j\}}\bigg(\prod_{i\neq j} \frac{(- A_{ij})^{k_{ij}}(l_i l_j)^{\frac{1}{2}k_{ij}}}{k_{ij}!}\bigg)\cdot \ind(r_i=0,\forall i\in[n])\\
    &= \sum_{\{k_{ij}\in\mathbb{N},\,i\neq j:\, r_i(k)=0,\forall i\in[n]\}}\bigg(\prod_{i\neq j} \frac{(- A_{ij})^{k_{ij}}(l_i l_j)^{\frac{1}{2}k_{ij}}}{k_{ij}!}\bigg).
    \end{align*}

The original integral should be
    \begin{align*}
    \int_{\mathbb{C}^n}\, & \phi_a\bar{\phi}_b f(\phi\bar{\phi}) d\mu(\phi) \\
    & = \int_{\mathbb{R}_+^n}\sqrt{l_a l_b} f(l) e^{-\sum_{i} A_{ii}l_i} \cdot \sum_{\{k_{ij}\in\mathbb{N},\,i\neq j:\, r_i(k)=0,\forall i\in[n]\}}\bigg(\prod_{i\neq j} \frac{(- A_{ij})^{k_{ij}}(l_i l_j)^{\frac{1}{2}k_{ij}}}{k_{ij}!}\bigg) dl\geq 0.
    \end{align*}

Here, we used the sign property of M-matrices, which guarantees $-A_{ij}\geq 0, \forall i\neq j$. We now justify the series expansions and the integration-summation exchanges, critically using positive definiteness. Note that we have
    \begin{align*}
    \int_{\mathbb{R}_+^n}\sqrt{l_a l_b} & f(l) e^{-\sum_{i} A_{ii}l_i}\bigg(\frac{1}{(2\pi)^n}\int_{[0,2\pi)^n}\, \prod_{i\neq j}\sum_{k_{ij}\in\mathbb{N}} \bigg|e^{\compi(\theta_a-\theta_b)}\frac{(- A_{ij}\sqrt{l_i l_j}e^{\compi(\theta_i-\theta_j)})^{k_{ij}}}{k_{ij}!}\bigg| d\theta\bigg) dl\\
    &= \int_{\mathbb{R}_+^n}\sqrt{l_a l_b} f(l) e^{-\sum_{i} A_{ii}l_i}\bigg(\frac{1}{(2\pi)^n}\int_{[0,2\pi)^n}\, \prod_{i\neq j}\sum_{k_{ij}\in\mathbb{N}} \frac{(-A_{ij})^{k_{ij}}(l_i l_j)^{\frac{1}{2}k_{ij}}}{k_{ij}!} d\theta\bigg) dl\\
    &= \int_{\mathbb{R}_+^n}\sqrt{l_a l_b} f(l) e^{-\sum_{i} A_{ii}l_i}\bigg(\frac{1}{(2\pi)^n}\int_{[0,2\pi)^n}\, \prod_{i\neq j}e^{-A_{ij}\sqrt{l_i l_j}} d\theta\bigg) dl\\
    &= \int_{\mathbb{R}_+^n}\sqrt{l_a l_b} f(l) e^{-\sum_{i} A_{ii}l_i} \prod_{i\neq j}e^{-A_{ij}\sqrt{l_i l_j}} dl\\
    &= \int_{\mathbb{R}_+^n}\sqrt{l_a l_b} f(l) e^{-\sum_{i} A_{ii}l_i} e^{-\sum_{i\neq j}A_{ij}\sqrt{l_i l_j}} dl\\
    &= \int_{\mathbb{R}_+^n}\sqrt{l_a l_b} f(l) e^{-\langle \sqrt{l}, A\sqrt{l}\rangle} dl.
    \end{align*}
Since $A$ has positive definite symmetric part, we know there exists $c>0$, such that $\langle \sqrt{l}, A\sqrt{l}\rangle \geq c\|\sqrt{l}\|^2=c\sum_il_i$. Therefore, the above integral should be less than $ \int_{\mathbb{R}_+^n}\sqrt{l_a l_b} f(l) e^{-c\sum_i l_i} dl.$ Now $f(l)$ having subexponential growth guarantees this integral to be finite. Then, the dominated convergence theorem guarantees the expansion and integration-summation exchanges.
\end{proof}

\section{Miscellaneous supporting results}  \label{app:misc}

\vspace{.3em} \begin{lem}
\label{lem:inv_psd}
Given a non-singular matrix $G\in M_n(\mathbb{R})$ with a positive definite symmetric part, its inverse $A:=G^{-1}$ also has a positive definite symmetric part.
\end{lem}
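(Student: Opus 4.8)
The plan is to reduce the statement to the elementary fact that a congruence by an invertible matrix preserves positive definiteness. Since $G$ is non-singular, $A := G^{-1}$ is well-defined and non-singular, and $A^T = (G^{-1})^T = (G^T)^{-1} = G^{-T}$, so the symmetric part of $A$ is $\frac{1}{2}(A + A^T) = \frac{1}{2}(G^{-1} + G^{-T})$. The key algebraic step is the identity
\[
G^{-1} + G^{-T} = G^{-1}(G + G^T)G^{-T},
\]
which follows at once from $G^{-1}G = I$ and $G^T G^{-T} = I$. Dividing by $2$ yields
\[
\tfrac{1}{2}(A + A^T) = G^{-1}\,\Bigl(\tfrac{1}{2}(G + G^T)\Bigr)\,G^{-T} = M S M^T,
\]
with $M := G^{-1}$ invertible and $S := \tfrac{1}{2}(G + G^T)$ positive definite by hypothesis.

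It then remains to verify positive definiteness directly: for any nonzero $x \in \mathbb{R}^n$, set $y := M^T x = G^{-T}x$; since $M^T$ is invertible we have $y \neq 0$, hence
\[
x^T \Bigl(\tfrac{1}{2}(A + A^T)\Bigr) x = y^T S y > 0
\]
by positive definiteness of $S$. Therefore $\frac{1}{2}(A + A^T)$ is positive definite, i.e.\ $A$ has a positive definite symmetric part, which is the claim.

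There is essentially no obstacle in this argument. The only points requiring a little care are the transpose/inverse bookkeeping in the factorization identity, and the conceptual observation that one should track the property ``has a positive definite symmetric part'' rather than symmetry of $G$ itself: the congruence $S \mapsto M S M^T$ by the invertible matrix $M = G^{-1}$ is precisely what transports positive definiteness of the symmetric part from $G$ to $A$, and no symmetry assumption on either $G$ or $A$ is needed. (This lemma is of course the standard Lyapunov-type observation also recorded, e.g., via Condition $I_{26}$ in \cite{plemmons77}; we include the short proof here for completeness.)
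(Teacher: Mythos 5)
Your proof is correct and follows essentially the same route as the paper's: both express $\tfrac{1}{2}(A+A^T)$ as a congruence of $\tfrac{1}{2}(G+G^T)$ by the invertible matrix $G^{-1}$ (the paper writes it as $G^{-T}\,\tfrac{1}{2}(G+G^T)\,G^{-1}$, you as $G^{-1}\,\tfrac{1}{2}(G+G^T)\,G^{-T}$, which are equally valid) and then conclude by positive definiteness of the symmetric part of $G$. No gaps.
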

\begin{proof}
Note that
\[\frac{ A + A^T}{2}= \frac{G^{-1} + G^{-T}}{2} = G^{-T} \frac{G + G^T}{2} G^{-1}.\]

Now for any $x\in\mathbb{R}^n$ such that $x\neq 0$, we have
\[\bigg\langle x, \frac{ A + A^T}{2}x\bigg\rangle
=\bigg\langle x, G^{-T} \frac{G + G^T}{2} G^{-1}x\bigg\rangle =\bigg\langle G^{-1}x, \frac{G + G^T}{2}G^{-1}x\bigg\rangle> 0,\] where the last step used the assumption that $G$ has a positive definite symmetric part. Therefore, we conclude that $A$ also has a positive definite symmetric part.
\end{proof}

\section*{Acknowledgements}

This research was supported by the grants CCF-1901004 and CIF-2007965 from the U.S. National Science Foundation.

\end{document}